\newcommand\shorttitle{Uniform asymptotic formulas for theta functions}
\newcommand\authors{\small Zhi-Guo Liu and Nian Hong Zhou}
\ifodd\value{page}
\authors
\shorttitle
\theoremstyle{plain}
\newtheorem{theorem}{Theorem}[section]
\newtheorem{lemma}[theorem]{Lemma}
\newtheorem{corollary}[theorem]{Corollary}
\newtheorem{proposition}[theorem]{Proposition}
\theoremstyle{remark}
\newtheorem{remark}{Remark}[section]
\newcommand{\Rmnum}[1]{\expandafter\@slowromancap\romannumeral #1@}
\def\P{\partial}
\def\ri{\mathrm i}
\def\rb{\mathbb R}
\def\nb{\mathbb N}
\def\zb{\mathbb Z}
\def\cb{{\mathbb C}}
\def\cL{{\mathcal L}}
\def\cC{{\mathcal C}}
\def\rrw{\rightarrow}
\numberwithin{equation}{section}
\title{\Large \bf Uniform asymptotic formulas for the Fourier coefficients of the inverse of theta functions}
\author{\small Zhi-Guo Liu and Nian Hong Zhou\footnote{Corresponding author}}
\date{}
\begin{document}

\maketitle




\begin{abstract}In this paper, we use basic asymptotic analysis to establish some uniform asymptotic formulas for the
Fourier coefficients of the inverse of Jacobi theta functions. In particular, we answer and improve some
problems suggested and investigated by Bringmann, Manschot, and Dousse. As applications, we establish the
asymptotic monotonicity properties for the rank and crank of the integer partitions introduced and investigated
by Dyson, Andrews, and Garvan.
\end{abstract}

\maketitle

\section{Introduction and statement of results}
\subsection{Background}\label{sec11}
Let $\zeta=e^{2\pi\ri z}$ and $q=e^{2\pi\ri\tau}$ with $z\in\cb$ and $\Im(\tau)>0$.  Let $\vartheta(z, \tau)$ be a \emph{Jacobi theta function}
given as the following Jacobi triple product:
$$
\vartheta(z, \tau)=-\ri\zeta^{-1/2}q^{1/8}\prod_{n\ge 1}(1-q^n)(1-\zeta q^{n-1})(1-\zeta^{-1}q^n)
$$
and let $\eta(\tau)$ be the Dedekind eta function given by
$$
\eta(\tau)=\ri q^{1/6}\vartheta(\tau, 3\tau)=q^{1/24}\prod_{k\ge 1}(1-q^k).
$$
The theory of Jacobi theta functions was first introduced and studied by Jacobi \cite{jacobi_2012}, which plays an important
role in analytic and combinatorial number theory (see, e. g., Eichler and Zagier~\cite{MR781735}, Andrews \cite{MR743546} and Garvan \cite{MR929094}),
algebraic geometry (see, e. g., G\"{o}ttsche \cite{MR1032930, MR1736989}, Yoshioka, K\={o}ta \cite{MR1285785} and Hausel and Rodriguez Villegas \cite{MR3364745}) and
theoretical physics~(see, e. g., Alvarez-Gaum\'{e}, Moore and Vafa~\cite{MR853977}, Moore~\cite{MR854192} and Korpas and Manschot \cite{MR3747224}).

\medskip

Let $\nb$ denote the set of all positive integers and let $k\in\nb$. Motivated by Bringmann and Manschot \cite{MR3210725}, and Bringmann and Dousse \cite{MR3451872},
we consider the inverse of theta functions
\begin{equation}\label{eqjk}
\mathcal{J}_k \left( z , \tau \right):=\sum_{m\in\zb}\sum_{n\ge 0}j_{m,k}(n)q^n\zeta^m := \frac{\zeta^{-1/2}q^{k/24}\eta(\tau)^{3-k}}{\ri\vartheta(z,\tau)},
\end{equation}
and define numbers $a_{m,k}(n)$\footnote{In \cite{MR3451872}, $a_{m,k}(n)$ is denoted as $M_k(m,n)$.} and $b_{m,k}(n)$ by
\begin{equation}\label{eqck}
\mathcal{C}_k \left( z, \tau\right):=\sum_{m\in\zb}\sum_{n\ge 0}a_{m,k}(n)q^n\zeta^m:= (1-\zeta)\mathcal{J}_k \left( z , \tau \right),
\end{equation}
and with $\chi_m(x)=(x^m-x^{-m})/(x-x^{-1})$,
\begin{equation}\label{eqck1}
\mathcal{C}_k \left( z , \tau \right):=\sum_{m, n\ge 0}b_{m,k}(n)\chi_{2m+1}\left(\zeta^{\frac{1}{2}}\right)q^n.
\end{equation}
Clearly, for all $m\in\zb$ and $n\in\nb_0$, $a_{m,k}(n)=j_{m,k}(n)-j_{m+1,k}(n)$, and for all $m, n\in\nb_0$ $b_{m,k}(n)=a_{m,k}(n)-a_{m+1,k}(n)$.

\medskip

Follows from G\"{o}ttsche \cite{MR1032930} and Bringmann and
Manschot \cite{MR3210725}, $\cC_k(z,\tau), (k\ge 3)$
are appear in algebraic geometry, which is well-known to be generating functions of Betti numbers of moduli spaces of Hilbert schemes
on $(k-3)$-point blow-ups of the projective plane and $a_{m,k}(n)$ are the Betti numbers of the moduli spaces. Moreover, the expansion \eqref{eqck1} in
terms of $b_{m,k}(n)$ decomposes the cohomology in terms of $(2m+1)$-dimensional ${\rm SL}(2)$
or ${\rm SU}(2)_{\rm spin}$ representations. For more details,
see Bringmann and Manschot \cite{MR3210725},  Bringmann and Dousse \cite{MR3451872} and references therein.

\medskip

$\cC_k(z,\tau), (k\ge 1)$ are also generating functions of certain statistics for integer partitions, see Garvan \cite{MR920146}, Hammond and Lewis \cite{MR2102870} and Fu and Tang \cite{MR3724176}. The most well-known is the generating function $\cC_1(z,\tau)$ of \emph{crank} of integer partitions. Recall that a partition of an integer $n$ is a sequence of nonincreasing positive integers whose sum equals $n$.  Setting $z=0$ in $\mathcal{C}_k \left( z, \tau\right)$ we obtain the generating function of the number of partitions
of integer $n$ allowing $k$ colors:
\begin{equation}\label{eqck11}
\sum_{n\ge 0}p_k(n)q^{n}:=\mathcal{C}_k \left( 1; q \right)=\frac{q^{k/24}}{\eta(\tau)^{k}}.
\end{equation}
Letting $k=1$ in \eqref{eqck11} we obtain the unrestrict partitions function $p(n)=p_1(n)$.
Ramanujan's famous partition congruences \cite{MR2280868} are
$$p(5n+4)\equiv 0~(\bmod 5),$$
$$p(7n+5)\equiv 0~(\bmod 7),$$
$$ p(11n+6)\equiv 0~(\bmod {11}),$$
holding for all $n\in\nb_0$. In 1944, Dyson \cite{MR3077150} introduced the \emph{rank} statistic for integer partitions to give a combinatorial interpretation
for the above partition congruences with modulus $5$ and $7$. However, the rank fails to explain Ramanujan's congruence modulo $11$.  Therefore Dyson \cite{MR3077150} conjectured
the existence of another statistic that he called the \emph{crank} which would explain above congruence modulo $11$.
The crank was found by Andrews and Garvan \cite{MR929094, MR920146}.
\medskip


Usually, when $n\ge 2$, $M(m,n)$ is used to denote the number of partitions of $n$ with crank $m$, which equals $a_{m,1}(n)$. If we further define $M(0,1)=-1$ and $M(\pm 1,1)=1$, then
\begin{equation}\label{eqcm}
\sum_{n\ge 0}M(m,n)q^n=\frac{q^{1/24}}{\eta(\tau)}\sum_{n\ge 1}(-1)^{n-1}q^{n(n-1)/2+|m|n}(1-q^n).
\end{equation}
In 1989, Dyson \cite{MR1001259} gave the following asymptotic formula conjecture:
\begin{equation}\label{dysc0}
M\left( m,n \right)\sim \frac{\pi}{4\sqrt{6n}} {\rm sech}^2 \left( \frac{\pi m}{2\sqrt{6n}}   \right) p(n),
\end{equation}
as $n\rrw +\infty$. He then asked a problem about the precise range of $m$ in which \eqref{dysc0} holds and about the error term. This conjecture has been proved first by Bringmann and Dousse in \cite[Theorem 1.2]{MR3451872}.  The problem about the precise range has been solved by the second author of this paper in \cite{MR3924736}. Very interestingly, Dousse and Mertens in \cite{MR3337213} proved the above Dyson's Conjecture hold also for $N(m,n)$, the number of partitions of $n$ with rank $m$. Recall that for $N(m,n)$, we have
\begin{equation}\label{eqrn}
\sum_{n\ge 0}N(m,n)q^n=\frac{q^{1/24}}{\eta(\tau)}\sum_{n\ge 1}(-1)^{n-1}q^{n(3n-1)/2+|m|n}(1-q^n).
\end{equation}
See \cite{MR3210725, MR3103192,MR3279269, MR3279269, MR3565363, MR3924736, MR4000108, MR4065708} for more related investigations.

\medskip

Motivated by the theory of integer partitions, algebraic geometry and theoretical physics, Bringmann and Manschot \cite{MR3210725} and
Bringmann and Dousse \cite{MR3451872} investigated the asymptotics of $a_{m,k}(n)$ and $b_{m,k}(n)$. In \cite{MR3210725} they proved the asymptotics for
the cases of fixed $m$, and in \cite{MR3451872} proved the uniform asymptotic formula for $a_{m,k}(n)$ in which holds for all $|m|\le \sqrt{{n}/{(6k\pi^2)}}\log n$
as $n$ tends to infinity.  For more related investigations, see for examples, \cite{MR855209, MR3361299, MR3906343, MR4065708}.

\medskip

In this paper, we investigate the more precise uniform asymptotic behavior of $j_{m,k}(n)$, $a_{m,k}(n)$ and $b_{m,k}(n)$,
which is partly suggested by Bringmann and Manschot \cite[Section 1.2]{MR3210725}. Our main tool is the classical asymptotic analysis which
is different above literature \cite{MR3210725, MR3451872, MR3361299, MR3906343}. We note that all of the above literature used the classical circle method.

\subsection{Main results}

The first result of this paper is the following uniform asymptotic formulas for $j_{m,k}(n)$, $a_{m,k}(n)$ and $b_{m,k}(n)$.
Let ${\bf 1}_{event}$ be the indicator function. We prove the following theorem.
\begin{theorem}\label{znh1} Let $m\in\zb$ and $n\in\nb$ such that $m= o(n^{3/4})$. We have
$$\frac{j_{m,k}\left(n-m{\bf 1}_{m>0}\right)}{p_k(n)}= \frac{1}{2}\left(1-\tanh\left(\frac{2|m|-1}{4}\sqrt{\frac{k\pi^2}{6n}}\right)\right)
\left(1+O\left(\frac{n+m^2}{n^{3/2}}\right)\right),$$
$$\frac{a_{m,k}(n)}{p_k(n)}= \frac{1}{4}\sqrt{\frac{k\pi^2}{6n}}{\rm sech}^2\left(\frac{m}{2}\sqrt{\frac{k\pi^2}{6n}}\right)
\left(1+O\left(\frac{n+m^2}{n^{3/2}}\right)\right),$$
and
\begin{align*}
\frac{b_{m,k}(n)}{p_k(n)}=&\frac{k\pi^2}{24n}
{\rm sech}^2 \left(\frac{2m+1}{4}\sqrt{\frac{k\pi^2}{6n}}\right)\tanh\left(\frac{2m+1}{4}\sqrt{\frac{k\pi^2}{6n}}\right)
\left(1+O\left(\frac{n+m^2}{n^{3/2}}\right)\right).
\end{align*}
\end{theorem}
The uniform asymptotic formula for $a_{m,k}(n)$ holds for more wide-ranging $m$ and smaller error term than \cite[Theorem 1.4]{MR3451872} of Bringmann and Dousse.
The uniform asymptotic formulas for $j_{m,k}\left(n-m{\bf 1}_{m>0}\right)$ and $b_{m,k}(n)$ are new. The formula for $b_{m,k}(n)$ also shows that there exists a constant $A_k$
such that $b_{m,k}(n)$ increases with $m\le ({1}/{2\pi})\sqrt{{6n}/{k}}\log(2+\sqrt{3})-A_k$ for all sufficiently large $n$.

We also prove the following more widely unform asymptotics.
\begin{theorem}\label{znh2}With $\delta_k(n)=\sqrt{k\pi^2/6n}$ and uniformly for all $m\in\zb$, as $n\rrw+\infty$
$$\left(1+e^{-|m|\delta_k(n)}\right)j_{m,k}\left(n+|m|{\bf 1}_{m<0}\right)\sim p_{k}(n),$$
$$\left(1+e^{-|m|\delta_k(n)}\right)^2a_{m,k}(n+|m|)\sim \delta_k(n)p_k(n),$$
and
$$\left(1+e^{-|m|\delta_k(n)}\right)^2b_{m,k}(n+|m|)\sim \delta_k(n)^2\tanh\left(4^{-1}(2m+1)\delta_k(n)\right)p_k(n).$$
\end{theorem}

From the Jacobi triple product, we know that $\mathcal{J}_k \left( z , \tau \right)$ is a meromorphic function of $\zeta\in\cb$, all poles are simple and
in the form of $q^{\ell}, \ell\in\zb$. Using the Mittag-Leffler theorem, $\mathcal{J}_k \left( z , \tau \right)$ can be represented as a so-called Lerch sum,
\begin{equation}\label{eqjkl}
\mathcal{J}_k \left( z , \tau \right)=\frac{q^{k/24}}{\eta(\tau)^k} \sum_{n\in\zb} \frac{(-1)^nq^{\frac{n(n+1)}2}}{1-\zeta q^n}, \quad (\zeta\not\in q^{\zb}),
\end{equation}
see Ramanujan's lost notebook \cite[Entry~3.2.1]{MR2952081} or Garvan \cite[Equation~(7.15)]{MR920146} for details. From \eqref{eqjk} and \eqref{eqjkl},
the Fourier coefficients $j_{m,k}(n)$ have the following generating function:
\begin{equation}\label{eqjkf}
\sum_{n\ge 0}j_{m,k}(n)q^n=\frac{q^{k/24}}{\eta(\tau)^k}\sum_{n\ge 1}(-1)^{n-1}q^{\frac{1}{2}n^2+(|m|-\frac{1}{2})n-|m|{\bf 1}_{m>0}},
\end{equation}
for all $m\in\zb$. Therefore, from the expansions \eqref{eqjkf} for $j_{m,k}(n)$ and \eqref{eqck11} for $p_k(n)$,
for all $m\in\zb$ and $n\in\nb$ we have
\begin{equation}\label{eqjef}
j_{m,k}\left(n-m{\bf 1}_{m>0}\right)=\sum_{\substack{\ell\ge 1\\ \frac{1}{2}\ell^2+(|m|-\frac{1}{2})\ell\le n}}(-1)^{\ell-1}p_k\left(n-\left(\frac{1}{2}\ell^2
+\left(|m|-\frac{1}{2}\right)\ell\right)\right).
\end{equation}

On the other hand, motivated by Dyson \cite{MR1001259, MR0238711}, Garvan \cite{MR1291125} and Berkovich and Garvan \cite{MR1932070}, let $I_k(m,n), (k\ge 1)$ be given by
\begin{equation}\label{1grk}
{\rm FG}_{k,m}(q):=\sum_{n\ge 0}I_{k}(m,n)q^n:=\frac{q^{1/24}}{\eta(\tau)}\sum_{n\ge 1}(-1)^{n-1}q^{n((2k-1)n-1)/2+mn},
\end{equation}
for all $m, n\in\nb_0$, and $N_k(m,n)$ be defined as
$$
\sum_{n\ge 0}N_{k}(m,n)q^n:={\rm FG}_{k,|m|}(q)-{\rm FG}_{k,|m|+1}(q),
$$
for all $m\in\zb$ and $n\in\nb_0$. From \eqref{eqcm}, \eqref{eqrn} and above, it is clear that
\begin{equation}\label{2grk}
N_k(m,n)=I_{k}(|m|,n)-I_{k}(|m|+1,n),
\end{equation}
$N_1(m,n)=M(m,n)$ and $N_2(m,n)=N(m,n)$. The integers $I_{k}(m,n)$ has a generating function similar to $j_{m,k}(n)$, that is \eqref{eqjkf}. Moreover, Dyson \cite{MR1001259, MR0238711} and Berkovich and Garvan \cite{MR1932070} showed that $I_k(m,n)$ are count certain statistics of integer partitions.
For each integer $k\ge 3$, Garvan \cite[Theorem (1.12)]{MR1291125} proved that
$N_k(m, n)$ is the number of partitions of $n$ into at least $(k-1)$ successive Durfee squares with \emph{$k$-rank} equals $m$.

For the above reasons, we can also use the same argument to Theorem \ref{znh1} and Theorem \ref{znh2} to give the uniform asymptotics for the rank and crank statistics for integer partitions. We have the following uniform asymptotics for $I_k(m,n)$ and $N_k(m,n)$.
\begin{theorem}\label{znh3} Let $k,n\in\nb$ and  $m\in\zb$ such that $m= o(n^{3/4})$. We have
$$\frac{I_k(|m|,n)}{p(n)}= \frac{1}{2}\left(1-{\rm tanh}\left(\frac{\pi(2|m|-1)}{4\sqrt{6n}}\right)\right)\left(1+
O\left(\frac{n+m^2}{n^{3/2}}\right)\right),$$
\begin{align*}
\frac{N_k(m,n)}{p(n)}=\frac{\pi}{4\sqrt{6n}}{\rm sech}^2\left(\frac{\pi m}{2\sqrt{6n}}\right)\left(1+O\left(\frac{n+m^2}{n^{3/2}}\right)\right)
\end{align*}
and
\begin{align*}
\frac{N_k(m,n)-N_k(m+1,n)}{p(n)}
= &\frac{\pi^2}{24n}{\rm sech}^2\left(\frac{\pi(2m+1)}{4\sqrt{6n}}\right)\\
&\times\tanh \left(\frac{\pi(2m+1)}{4\sqrt{6n}}\right)
\left(1+O\left(\frac{n+m^2}{n^{3/2}}\right)\right).
\end{align*}
\end{theorem}
\begin{remark}
The asymptotic formula for $N_2(m,n)$ improves the result of Dousse and Mertens \cite{MR3337213}.
\end{remark}
We also have the following unform asymptotics.
\begin{theorem}\label{znh4}With $\delta(n)=\sqrt{\pi^2/6n}$ and uniformly for all $m\in\zb$, as $n\rrw+\infty$
$$\left(1+e^{-|m|\delta(n)}\right)I_{k}\left(|m|, n\right)\sim p(n),$$
$$\left(1+e^{-|m|\delta(n)}\right)^2N_k(m,n+|m|)\sim \delta(n)p(n),$$
and
$$\frac{N_k(m,n+|m|)-N_k(m+1,n+|m|)}{\delta(n)^2p(n)}\sim \frac{\tanh\left(4^{-1}(2m+1)\delta(n)\right)}{\left(1+e^{-|m|\delta(n)}\right)^2}.$$
\end{theorem}
The third asymptotic formula of Theorem \ref{znh4} gives the asymptotic monotonicity properties for $N_k(m,n)$.
We note that the monotonicity properties of $N_2(m,n)$ and $N_1(m,n)$ has been investigated by Chan and Mao \cite{MR3190432} and Males \cite{JMal}, and Ji and Zang \cite{JZ}, respectively.

Furthermore, from \eqref{1grk} and \eqref{2grk} we have for all $|m|>n/2$,
$N_k(m,n)=p(n-(k-1+|m|))-p(n-(k+|m|))$.
This yields for all $m>n/2$ and with $\ell=n-k-m$ we have
$$N_k(m,n)-N_k(m+1,n)=p(\ell+1)-2p(\ell)+p(\ell-1).$$
Using the fact that $p(\ell+1)-2p(\ell)+p(\ell-1)\ge 0$ for all $\ell>0$, we have
$N_k(m,n)-N_k(m+1,n)\ge 0$ for all $n/2<m< n-k$. Therefore, using Theorem \ref{znh4} and the fact that $N_k(m,n)=N_k(|m|,n)$ we have
following unimodal properties:
\begin{corollary}\label{corocm1}
For each $k\in\nb$, $\{N_k(m,n)\}_{m=k+1-n}^{n-k-1}$ is a unimodal sequence for all sufficiently large positive integers $n$.
\end{corollary}
We are also interested in the difference between $I_{k}(m,n)$ and $I_{k+1}(m,n)$, and $N_{k}(m,n)$ and $N_{k+1}(m,n)$. We derive the following uniform asymptotic formulas.
\begin{theorem}\label{znh5} Let $k,n\in\nb$ and $m\in\zb$ such that $m= o(n^{3/4})$. We have
\begin{align*}\frac{I_{k}(|m|,n)-I_{k+1}(|m|,n)}{p(n)}
=&\frac{\pi}{4\sqrt{6n}}{\rm sech}^2\left(\frac{\pi (2|m|-1)}{4\sqrt{6n}}\right)\\
&\times\left(\tanh\left(\frac{\pi (2|m|-1)}{4\sqrt{6n}}\right)
+O\left(\frac{n^{2}+m^4}{n^{5/2}}\right)\right)
\end{align*}
and
$$\frac{N_{k+1}(m,n)-N_{k}(m,n)}{\pi^2(48n)^{-1}p(n)}
={\rm sech}^2\left(\frac{\pi m}{2\sqrt{6n}}\right)\left(1-3\tanh^2\left(\frac{\pi m}{2\sqrt{6n}}\right)
+O\left(\frac{n^{2}+m^4}{n^{5/2}}\right)\right).$$
In particular, if $m=m_k(n)\in\zb$ with $|m_k(n)|=O(\sqrt{n})$ such
that $|N_k(m,n)-N_{k+1}(m,n)|$ takes the minimum value, then as $n\rrw +\infty$
$$|m_k(n)|=\frac{\log(2+\sqrt{3})}{\pi}\sqrt{6n}+O(1).$$

\end{theorem}

\begin{remark}
From Theorem \ref{mth} below, it is possible to prove an asymptotic expansion of the following form for the above $m_k(n)$:
$$|m_k(n)|\sim \frac{\sqrt{6n}}{\pi}\log(2+\sqrt{3})+\sum_{d\ge 0}\frac{c_k(d)}{n^{d/2}},$$
as $n\rrw +\infty$, where $c_k(d)$ are computable constants depending only on $k$ and $d$.
\end{remark}

We formulate the idea of the proof of our main results of this paper as the following. Let us consider a class of functions $f$ which has a similar asymptotic expansion to the partition functions $p_k(n)$, that is
\begin{equation}\label{aspk}
p_k(n)\sim \frac{e^{2\pi\sqrt{{kn}/{6}}}}{n^{\alpha_{p_k}}}\sum_{\ell\ge 0}\frac{\gamma_{\ell}(p_k)}{n^{\ell/2}},
\end{equation}
as $n\rrw +\infty$, for some constants $\alpha_{p_k}, \gamma_{\ell}(p_k)\in\rb$ and $\gamma_{0}(p_k)\in\rb_+$.
We note that the asymptotic expansion \eqref{aspk} was essentially proved by Rademacher and Zuckerman \cite{MR1503417}. More precisely, let $f: \rb\rrw\rb$ be a real
function with asymptotic expansion of the form
\begin{equation}\label{fdef}
f(X)\sim \frac{e^{\beta_f\sqrt{X}}}{X^{\alpha_f}}\sum_{n\ge 0}\frac{\gamma_n(f)}{X^{n/2}},
\end{equation}
as $X\rrw +\infty$, with $\beta_f\in\rb_+$, $\alpha_f, \gamma_n(f)\in\rb$ for all integers $n\ge 0$ and $\gamma_0(f)\in\rb_+$.
Also, suppose that if $x<0$ then $f(x)=0$, and for any given $A>0$, $f(x)=O(1)$ for all $|x|\le A$. Let us define $S_{f}(a,b; X)$ by the following alternating sum
\begin{equation}\label{eq1}
S_{f}(a,b; X):=\sum_{\substack{n\ge 1\\ an^2+bn\le X}}(-1)^{n-1}f\left(X-(an^2+bn)\right),
\end{equation}
with $a\in\rb_+$ and $b\in\rb$. Then, from \eqref{eqjkf}--\eqref{1grk}, it is clear that
\begin{equation}\label{eq00}
j_{m,k}(n-m{\bf 1}_{m>0})=S_{p_k}\left({1}/{2},|m|-{1}/{2}; n\right)
\end{equation}
and
\begin{equation}\label{eq00'}
I_k(|m|,n)=S_{p}\left(k-{1}/{2},|m|-{1}/{2}; n\right).
\end{equation}
Therefore Theorems \ref{znh1}--\ref{znh5} follow from the asymptotics for $S_{f}\left(a,b; X\right)$.

\medskip

Our main results of this paper are stated in the following.
\begin{theorem}\label{mth}
\label{th20}Let $p\in\nb, a\in\rb_+$ and $\mu\in\rb$ be given. For $b, X\ge 0$ such that $b= o(X^{3/4})$ we have
\begin{equation*}
\frac{S_{f}(a,b+\mu;X)}{f(X)}=\left(\sum_{0\le d<3p }\frac{\cL_{f,d}(\mu ,a,b,\P_{\alpha})}{X^{d/2}}
+O\left(\frac{X^{-p}+\alpha^{2p}}{X^{p/2}}\right)\right)\bigg|_{\alpha=\frac{b\beta_f}{2\sqrt{X}}}\left\{\frac{1}{1+e^{\alpha}}\right\},
\end{equation*}
as $X\rrw \infty$. Here $\P_{\alpha}=\frac{\,d}{\,d\alpha}$,
$\cL_{f,d}(\mu,a,b,\P_{\alpha})$ is a differential operator defined as
$$\cL_{f,d}(\mu,a,b,\P_{\alpha})
=\sum_{\substack{r,\ell,s\ge 0\\ 3r+2\ell+2s\le 2d}}C_{r,\ell,s}(d;f)a^sb^r\mu^{\ell}\P_{\alpha}^{r+\ell+2s},$$
and $C_{r,\ell,s}(d;f)$ are constants given by \eqref{eqccc} depending only on $r$, $\ell$,
$s$, $d$, and $f$.
\end{theorem}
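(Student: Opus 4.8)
The plan is to analyze the normalized sum $\Psi(X):=S_f(a,b+\mu;X)/f(X)=\sum_{n\ge 1,\,w_n\le X}(-1)^{n-1}f(X-w_n)/f(X)$, where $w_n:=an^2+(b+\mu)n$, and to reduce the whole statement to the single generating identity
$$\sum_{n\ge 1}(-1)^{n-1}n^{j}e^{-\alpha n}=(-\partial_\alpha)^{j}\sum_{n\ge 1}(-1)^{n-1}e^{-\alpha n}=(-\partial_\alpha)^{j}\frac{1}{1+e^{\alpha}},\qquad j\ge 0,$$
which turns every polynomial-in-$n$ coefficient into a differential operator in $\alpha$ acting on $1/(1+e^{\alpha})$. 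First I would record a crude global estimate: combining $f(Y)\ll (1+Y)^{C}e^{\beta_f\sqrt Y}$ (valid for all $Y\ge 0$ since $f=O(1)$ on bounded sets and $f$ satisfies \eqref{fdef}) with $f(X)\gg e^{\beta_f\sqrt X}X^{-\alpha_f}$ and $\sqrt{X-w_n}-\sqrt X\le -w_n/(2\sqrt X)$ gives $f(X-w_n)/f(X)\ll X^{C}e^{-\beta_f w_n/(2\sqrt X)}$. Since $w_n\ge an^2$ for $n$ large, this shows that all $n$ beyond $N:=X^{1/4}\log X$ contribute less than any fixed power of $X$; on the complementary range $n\le N$ one has $w_n=o(X)$, so $X-w_n\asymp X\to\infty$ and the expansion \eqref{fdef} is available uniformly.

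Next I would carry out the core expansion. On the main range I write $f(X-w_n)/f(X)=e^{-\alpha n}\,G_n(X)$ with $\alpha=b\beta_f/(2\sqrt X)$, where the factor $e^{-\alpha n}$ is extracted from the linear-in-$b$ part of $\beta_f(\sqrt{X-w_n}-\sqrt X)=-\beta_f w_n/(2\sqrt X)-\beta_f w_n^2/(8X^{3/2})-\cdots$, and $G_n(X)$ collects everything else: the Gaussian factor $e^{-\beta_f an^2/(2\sqrt X)}$, the factor $e^{-\beta_f\mu n/(2\sqrt X)}$, all higher terms $w_n^k X^{-(2k-1)/2}$, the prefactor ratio $(X/(X-w_n))^{\alpha_f}$, and the ratio of the $\gamma$-series of \eqref{fdef} at $X-w_n$ and at $X$. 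Expanding $G_n(X)$ into a finite Taylor polynomial in $X^{-1/2}$ with polynomial-in-$n$ coefficients and bookkeeping the sources of each variable — each power of $a$ enters as $an^2/\sqrt X$ (weight $X^{-1/2}$, degree $2$ in $n$), each power of $\mu$ as $\mu n/\sqrt X$ (weight $X^{-1/2}$, degree $1$), while each power of $b$ enters the genuine corrections only through the cross terms of the $w_n^k$, the leading such contribution being $(bn)^2X^{-3/2}$, so that the leading pure-$b^{r}$ term sits at $g=3r/2$ — produces precisely the monomials $a^s b^r\mu^\ell\,n^{\,r+\ell+2s}X^{-g/2}$ subject to $3r+2\ell+2s\le 2g$. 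This both defines the constants $C_{r,\ell,s}(g;f)$ and explains the weight $3r$ attached to $b$.

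Summing the main terms over \emph{all} $n\ge 1$ and applying the generating identity converts each coefficient $n^{r+\ell+2s}$ into $\partial_\alpha^{r+\ell+2s}$, assembling exactly $\sum_{0\le g<3p}X^{-g/2}\,\mathcal{L}_{f,g}(\mu,a,b,\partial_\alpha)\{(1+e^{\alpha})^{-1}\}$. A point worth stressing is that these must be summed over the full range $n\ge 1$, not over $n\le N$: the closed forms $(-\partial_\alpha)^{j}(1+e^{\alpha})^{-1}$ stay $O(1)$ but, when $\alpha\to 0$, receive genuine contributions from $n$ up to $\sim 1/\alpha\gg N$, and only the alternating cancellation keeps them bounded. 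To match the error, substitute $b=2\sqrt X\,\alpha/\beta_f$: the first neglected ($g=3p$) terms are, in their pure $a,\mu$ part, of size $X^{-3p/2}=X^{-p}\!\cdot X^{-p/2}$, and in their pure-$b$ part of size $\alpha^{2p}X^{-p/2}$, both carried by bounded operators applied to $(1+e^{\alpha})^{-1}$; this is exactly the claimed $O\big((X^{-p}+\alpha^{2p})X^{-p/2}\big)$, and the hypothesis $b=o(X^{3/4})$, i.e. $\alpha=o(X^{1/4})$, is precisely what makes $\alpha^{2p}X^{-p/2}=o(1)$.

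The main obstacle is the rigorous control of the remainder $\mathcal E=\sum_{n\ge 1}(-1)^{n-1}e^{-\alpha n}\big(G_n(X)-Q(n,X)\big)$ uniformly for $\alpha=o(X^{1/4})$, where $Q$ is the truncated polynomial and $G_n\equiv 0$ once $w_n>X$. The difficulty is genuine: a term-by-term absolute bound on the Taylor remainder of the Gaussian factor $e^{-\beta_f an^2/(2\sqrt X)}$ is stuck at size $\asymp X^{1/4}$ no matter how many terms are retained, that being the intrinsic scale of the associated Gaussian sum, whereas the true size $X^{-p/2}$ is reached only after exploiting the sign pattern $(-1)^{n-1}$. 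Concretely, two cutoff scales compete — the linear scale $n\sim 1/\alpha$ and the Gaussian scale $n\sim X^{1/4}$ — and for small $\alpha$ the naive bound fails catastrophically. I would therefore estimate $\mathcal E$ by a Boole/Euler--Maclaurin-type summation, i.e. summation by parts against the bounded partial sums of $(-1)^{n-1}$, keeping the genuine Gaussian decay of $G_n$ intact rather than expanding it away; this transfers the estimate onto finitely many derivatives of a fixed Schwartz-type profile and reproduces the two pieces of the error term identified above.
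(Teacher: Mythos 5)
Your architecture is the paper's own, step for step: cut off the tail, expand $f(X-w_n)/f(X)$ by its asymptotic expansion (this is Proposition \ref{lem22}), complete the resulting sums over \emph{all} $n\ge 1$, convert the powers $n^{j}$ into $\P_{\alpha}^{j}$ acting on $1/(1+e^{\alpha})$, and rescue the Gaussian Taylor remainder by exploiting the signs $(-1)^{n-1}$. Your diagnosis that term-by-term absolute bounds stall at the scale $X^{1/4}$ is exactly the paper's reason for rejecting \eqref{eqft}, and ``iterated summation by parts for an alternating series'' is precisely the paper's Euler transform (Lemma \ref{lem41}). Your bookkeeping of the weights is also essentially right and your reading of the error term (pure $a,\mu$ part of size $X^{-3p/2}$, pure $b$ part of size $\alpha^{2p}X^{-p/2}$) is correct; one small slip: the first genuine pure-$b$ correction is $\alpha_f bn/X$ coming from the prefactor $(X-w_n)^{-\alpha_f}$ (this is $\lambda_{1,1}(f)=-\alpha_f$ entering at $g=2$), not $(bn)^2X^{-3/2}$, which is only what makes the constraint $3r\le 2g$ tight.

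The genuine gap is in the remainder estimate, which is the actual content of the theorem, and it is twofold. First, a repairable point: $G_n$ contains $f(X-w_n)$, and $f$ is only assumed to \emph{have} an asymptotic expansion, not to be differentiable, so there is no ``Schwartz-type profile'' to differentiate until you have first replaced $f(X-w_n)/f(X)$ by its expansion and bounded the swap error in absolute value (this does work, as in Proposition \ref{pro31}, because that error carries the factor $(w_n/X^{3/4})^{p}$). Second, and substantively: you sum by parts against the partial sums of $(-1)^{n-1}$ alone and then estimate \emph{derivatives} of the profile, which therefore still contains the factor $e^{-\alpha t}$; each differentiation then costs a factor of order $1+\alpha$, and $\alpha$ is as large as $o(X^{1/4})$ in the claimed range. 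Quantitatively: to make the non-decaying Taylor-polynomial tail of the order-$N$ Gaussian remainder harmless you need at least $2N-1$ integrations by parts, while matching the target error $\left(X^{-p}+\alpha^{2p}\right)X^{-p/2}/(1+e^{\alpha})$ when $\alpha$ is a positive power of $X$ forces you to use at most $2N+1$ of them, since each one spends a factor $\alpha$. The entire proof must thread a window of width two in the number of summations by parts, and nothing in your sketch verifies that the full profile (with all $a,\mu,b$ cross terms and the boundary terms of Boole's formula) passes through it; the sentence ``reproduces the two pieces of the error term'' is asserted, not shown. The paper does not thread this needle, it removes it: Proposition \ref{lemth} computes the Taylor coefficients in $z$ \emph{exactly} as $\P_{\alpha}^{2k+\ell}\left(1/(1+e^{\alpha})\right)$, so the only object ever estimated is the integral remainder, and that is handled by Lemma \ref{lem41} with the geometric weight $x=-e^{-\alpha}$, so that the finite differences fall only on $e^{-n^2u}$ (estimate \eqref{eq33}, which contains no $\alpha$ at all) while all the $\alpha$-dependence is confined to prefactors $e^{-r\alpha}/(1+e^{-\alpha})^{r+1}$ bounded uniformly for all $\alpha\ge 0$ by Lemma \ref{lem211}; that is what makes Proposition \ref{pro24}, hence Theorem \ref{thmft} and Proposition \ref{pro25}, uniform in all $b\ge 0$. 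To salvage your derivative-based route you would at least need to split off the regime $b\ge\sqrt{X}(\log X)^2$ and handle it by first-term domination (as the paper itself does, for a different theorem, in Subsection \ref{sec43}); as written, your argument establishes the expansion only for $b\ll\sqrt{X}(\log X)^{O(1)}$, not on the full range $b=o(X^{3/4})$ that the theorem claims.
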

\begin{remark}
A parameter $\mu$ was introduced in the above theorem. So that we can conveniently compute the difference of $S_f(a,b; X)$ with respect to $b$.
This is required to find the asymptotics for $a_{m,k}(n)=j_{m,k}(n)-j_{m+1,k}(n)$, $b_{m,k}(n)=a_{m,k}(n)-a_{m+1,k}(n)$, $N_k(m,n)=I_{k}(|m|,n)-I_{k}(|m|+1,n)$ and others.
\end{remark}
For  relatively large $b$ we prove the following theorem.
\begin{theorem}\label{thm1}Let $L(x)$ be a real function satisfying $\lim\limits_{x\rrw+\infty}L(x)=+\infty$, and
let $a\in\rb_+$ and $\mu\in\rb$ be given. If $b>X/3$ and $X\rrw+\infty$ then
$$S_f(a,b; X)=f(X-a-b)+O(|f(X-4a-2b)|).$$
If $X, b\in\rb_+$ such that $L(X)X^{1/2}\log X\le b\le X-L(X)$, then we have an
asymptotic expansion of form
$$
\frac{S_f(a,b+\mu; X)}{f(X-b)}\sim \sum_{d\ge 0}\frac{1}{(X-b)^{d/2}}\sum_{\substack{\ell,s\ge 0\\ \ell+s\le d}}C_{\ell,s}(d;f)\mu^{\ell}a^{s},
$$
where $C_{\ell,s}(d;f)$ are constants given by \eqref{eqccc1} depending only on $\ell$,
$s$, $d$ and $f$.
\end{theorem}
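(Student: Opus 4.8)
\emph{Part 1: the range $b>X/3$.} The plan is to observe that the defining sum \eqref{eq1} collapses to at most two terms. Every index $n\ge 1$ contributing to $S_f(a,b;X)$ satisfies $bn\le an^2+bn\le X$, so $b>X/3$ forces $n\le X/b<3$; only $n=1$ and $n=2$ survive. With the convention $f(x)=0$ for $x<0$ (which automatically enforces the constraints $a+b\le X$ and $4a+2b\le X$), reading off these terms gives $S_f(a,b;X)=f(X-a-b)-f(X-4a-2b)$, i.e.\ $f(X-a-b)+O(|f(X-4a-2b)|)$. No analysis of the asymptotic expansion of $f$ is required here.

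\emph{Part 2: the range $L(X)X^{1/2}\log X\le b\le X-L(X)$.} Put $Y:=X-b$, so that $Y\ge L(X)\rrw+\infty$ and the expansion \eqref{fdef} is available at $Y$ and, since $a+\mu$ is a fixed constant, at $Y-a-\mu$ as well; moreover the $n=1$ term $f(X-a-(b+\mu))=f(Y-a-\mu)$ has argument $\ge L(X)-a-\mu\rrw\infty$. The strategy is to show that this single term already carries the full expansion while the indices $n\ge 2$ contribute negligibly to all orders. First I would record the crude global bound $|f(t)|\le Ce^{\beta_f\sqrt{t_+}}$ for all $t\in\rb$ (with $t_+=\max(t,0)$), immediate from \eqref{fdef} together with the hypotheses that $f$ vanishes on $(-\infty,0)$ and is bounded on bounded sets. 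If $b>X/2-2a-\mu$ then $4a+2(b+\mu)>X$, the tail is empty, and $S_f(a,b+\mu;X)=f(Y-a-\mu)$ exactly. Otherwise $X-4a-2(b+\mu)\ge 0$, and since $X-an^2-(b+\mu)n$ decreases in $n$ while $t\mapsto e^{\beta_f\sqrt{t_+}}$ increases, the tail is bounded by the number of terms (which is $\ll\sqrt X$) times the $n=2$ value:
\[
\Bigl|\sum_{n\ge 2}(-1)^{n-1}f\bigl(X-an^2-(b+\mu)n\bigr)\Bigr|\ll \sqrt{X}\,e^{\beta_f\sqrt{X-4a-2(b+\mu)}}.
\]
Dividing by $f(Y)\asymp e^{\beta_f\sqrt{Y}}Y^{-\alpha_f}$ and using
\[
\sqrt{X-4a-2(b+\mu)}-\sqrt{X-b}=\frac{-(b+4a+2\mu)}{\sqrt{X-4a-2(b+\mu)}+\sqrt{X-b}}\le -\frac{b}{2\sqrt X}+O\!\left(\frac1{\sqrt X}\right)\le -\tfrac13 L(X)\log X
\]
for large $X$, one finds that the tail relative to $f(Y)$ is $\ll X^{O(1)}X^{-\beta_f L(X)/3}=o(Y^{-N})$ for every fixed $N$, because $L(X)\rrw\infty$. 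Thus the tail cannot influence any coefficient of the claimed expansion.

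It then remains to expand the $n=1$ contribution. Setting $c:=a+\mu$ and inserting \eqref{fdef} for both $f(Y-c)$ and $f(Y)$ gives
\[
\frac{f(Y-c)}{f(Y)}=\exp\!\bigl(\beta_f(\sqrt{Y-c}-\sqrt{Y})\bigr)\Bigl(\frac{Y}{Y-c}\Bigr)^{\alpha_f}\frac{\sum_{n\ge0}\gamma_n(f)(Y-c)^{-n/2}}{\sum_{n\ge0}\gamma_n(f)Y^{-n/2}}.
\]
Expanding each factor in powers of $Y^{-1/2}$, via the binomial series for $\sqrt{Y-c}=\sqrt{Y}\sqrt{1-c/Y}$ and $(1-c/Y)^{-\alpha_f}$ and then exponentiating, produces an asymptotic series $\sum_{g\ge0}Y^{-g/2}P_g(c)$ in which each $P_g$ is a polynomial in $c$ of degree at most $g$. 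The degree bound is the one bookkeeping point worth checking: a monomial $c^{\,j}Y^{-(2j-1)/2}$ of the exponent carries $c$-degree $j$ and $Y$-order $2j-1\ge j$, so order always dominates degree, and this property is preserved under products of the three series and under the exponential, forcing $\deg P_g\le g$. Writing $c=a+\mu$ and collecting the monomials $\mu^\ell a^s$, which necessarily satisfy $\ell+s\le g$, yields precisely the coefficients $C_{\ell,s}(g;f)$ in the statement; uniformity in $b$ is automatic since the whole expansion depends on $b$ only through $Y=X-b\rrw\infty$.

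The hard part is the tail estimate of Part 2: one must make the super-polynomial decay of the $n\ge2$ terms uniform across the entire admissible window for $b$, and in particular handle cleanly the transition near $b\approx X/2$ where the $n=2$ argument crosses $0$. The global bound $|f(t)|\le Ce^{\beta_f\sqrt{t_+}}$ is exactly what lets the single case split above cover both the small-$b$ regime and the near-$X/2$ regime without further ado.
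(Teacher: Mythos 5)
Your proposal is correct and takes essentially the same route as the paper's: the first part is the paper's Proposition \ref{pro32} (only the terms $n=1,2$ survive when $b>X/3$), and the second part, exactly as in the paper, isolates the $n=1$ term, shows the $n\ge 2$ tail is super-polynomially negligible because $b\ge L(X)X^{1/2}\log X$ forces $e^{-\beta_f b/(2\sqrt{X})}\le X^{-\beta_f L(X)/2}$, and then expands the shift $f(X-b-a-\mu)/f(X-b)$ in powers of $(X-b)^{-1/2}$ with coefficients polynomial of degree at most $g$ in $a+\mu$ --- the binomial bookkeeping you carry out inline is precisely the content of the paper's Proposition \ref{lem22} and Corollary \ref{cor21}. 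The only slip is cosmetic: the claimed global bound $|f(t)|\le Ce^{\beta_f\sqrt{t_+}}$ fails when $\alpha_f<0$ (the expansion \eqref{fdef} forces an extra factor $t_+^{-\alpha_f}$), but replacing it by $Ce^{\beta_f\sqrt{t_+}}(1+t_+)^{|\alpha_f|}$ changes nothing, since the polynomial factor is absorbed in the $X^{O(1)}$ slack of your tail estimate.
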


We shall give some asymptotic formulas for Theorem \ref{mth} and Theorem \ref{thm1} which are convenient to use. The leading asymptotic behavior involving $S_f(a,b; X)$ follows from those formulas. To do this, we need to introduce the forward
difference operator $\Delta$ defined as
$$\Delta_x^{0}F(n)=F(x),~\Delta_x F(x)=F(x+1)-F(x), ~\Delta_x^{J+1}F(x)=\Delta_x(\Delta_x^{J}F(x)),$$
for each $J\in\nb$ and any given function $F(x)$.  We prove the following theorem.

\begin{theorem}\label{propm}Let $J\in\nb_0$, $a, a_1,a_2\in\rb_+$ and $\mu\in\rb\setminus\{0\}$ be given.
For $b, X\ge 0$ such that $b= o(X^{3/4})$ we have
\begin{align*}
&\frac{\Delta_u^J\big|_{u=0}S_f(a,b+u\mu;X)}{X^{-J/2}(\mu \beta_f/2)^Jf(X)}\\
&\qquad\quad\qquad=\bigg(\P_{\alpha}^J-\frac{{M}_{f,J}(a,\P_{\alpha})}{2\beta_f\sqrt{X}}+O\left(\frac{1+\alpha^4}{X}\right)\bigg)
\Big|_{\alpha=\frac{(2b+\mu J)\beta_f}{4\sqrt{X}}}\left\{\frac{1}{1+e^{\alpha}}\right\},
\end{align*}
as $X\rrw +\infty$, where $\P_{\alpha}:=\frac{\,d}{\,d\alpha}$ and
\begin{align*}
{M}_{f,J}(a,\P_{\alpha})=\left(4\alpha_f-1+J\right)J\P_{\alpha}^{J}
+2(J+2\alpha_f)\alpha\P_{\alpha}^{J+1}+(a\beta_f^2+\alpha^2)\P_{\alpha}^{J+2}
\end{align*}
is a differential operator. In particular,
\begin{align*}
&\frac{\Delta_{r}\big|_{r=0}\Delta_{u}^J\big|_{u=0}S_{f}(a_{r},b+\mu u;X)}{X^{-J/2}(\mu \beta_f/2)^Jf(X)}\\
&\qquad\qquad=\left(\frac{(a_0-a_1)\beta_f}{2\sqrt{X}}\P_{\alpha}^{J+2}+ O\left(\frac{1+\alpha^4}{X}\right)\right)
\Big|_{\alpha=\frac{(2b+\mu J)\beta_f}{4\sqrt{X}}}\left\{\frac{1}{1+e^{\alpha}}\right\},
\end{align*}
as $X\rrw +\infty$.
\end{theorem}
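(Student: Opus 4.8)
The plan is to derive Theorem~\ref{propm} directly from the master expansion of Theorem~\ref{mth}, by reading $\Delta_u^J|_{u=0}$ as a finite-difference operator acting on the shift variable. Put $c=u\mu$ and apply Theorem~\ref{mth} with $\mu$ replaced by $c$; this is legitimate because $2b=o(X^{3/4})$ keeps every shifted parameter $b+u\mu$ ($0\le u\le J$) inside the admissible range $o(X^{3/4})$. Then $S_f(a,b+c;X)/f(X)$ equals $\sum_{0\le g<3p}X^{-g/2}\cL_{f,g}(c,a,b,\P_\alpha)\{(1+e^\alpha)^{-1}\}$ at $\alpha=b\beta_f/(2\sqrt X)$, up to the stated error, and the decisive point is that the whole $c$-dependence is \emph{polynomial}, entering only through $c^\ell=(u\mu)^\ell$ inside $\cL_{f,g}$. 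I would therefore use the elementary identity $\Delta_u^J|_{u=0}u^\ell=J!\,\mathcal S(\ell,J)$, with $\mathcal S(\ell,J)$ the Stirling number of the second kind, which kills every monomial with $\ell<J$ and equals $J!$ for $\ell=J$; thus only the indices $\ell\ge J$ of $\cL_{f,g}$ survive the differencing.

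First I would isolate the leading term. The constraint $3r+2\ell+2s\le 2g$ with $\ell\ge J$ forces $g\ge J$, with equality only at $(r,\ell,s)=(0,J,0)$; since one checks from \eqref{eqccc} that $C_{0,J,0}(J;f)=(\beta_f/2)^J/J!$, this term, once divided by the normalization $X^{-J/2}(\mu\beta_f/2)^J$, returns exactly the operator $\P_\alpha^J$. That the correct base point is the midpoint $\tfrac{(2b+\mu J)\beta_f}{4\sqrt X}$ rather than $b\beta_f/(2\sqrt X)$ is seen most cleanly by recasting the difference of the leading sigmoid through the shift identity $\Delta_u^J|_{u=0}=(e^{h\P_\alpha}-1)^J=e^{Jh\P_\alpha/2}(2\sinh(h\P_\alpha/2))^J$ with $h=\mu\beta_f/(2\sqrt X)$: the translation $e^{Jh\P_\alpha/2}$ carries the base point by $Jh/2$ onto the midpoint, while $(2\sinh(h\P_\alpha/2))^J=(h\P_\alpha)^J(1+\tfrac{J}{24}(h\P_\alpha)^2+\cdots)$ fixes the normalization and relegates the intrinsic curvature correction of the difference to order $X^{-1}$. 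Matching this against the Stirling-number computation confirms that at the midpoint the leading operator is $\P_\alpha^J$ and that the differencing mechanism itself contributes nothing at order $X^{-1/2}$.

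The hard part will be the correction $-M_{f,J}(a,\P_\alpha)/(2\beta_f\sqrt X)$, which must come entirely from the terms of the master expansion that do \emph{not} reassemble into the shifted sigmoid $(1+e^{\alpha_c})^{-1}$ at $\alpha_c=(b+c)\beta_f/(2\sqrt X)$: namely the summands of $\cL_{f,g}$ with $g$ above the minimal value $\ell$, or with $r,s>0$, kept up to $g=J+1$. (The minimal-$g$, $r=s=0$ terms merely force $C_{0,\ell,0}(\ell;f)=(\beta_f/2)^\ell/\ell!$ and rebuild the shifted sigmoid.) I would substitute $b=2\sqrt X\,\alpha/\beta_f$ to turn every surviving $b^r$ into $\alpha^r X^{r/2}$, collect the coefficients of $\P_\alpha^J$, $\alpha\P_\alpha^{J+1}$, $\P_\alpha^{J+2}$ and $\alpha^2\P_\alpha^{J+2}$, and verify against \eqref{eqccc} that they sum to $(4\alpha_f-1+J)J\P_\alpha^J+2(J+2\alpha_f)\alpha\P_\alpha^{J+1}+(a\beta_f^2+\alpha^2)\P_\alpha^{J+2}$. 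The main obstacle is precisely this bookkeeping: determining which quadruples $(r,\ell,s,g)$ survive at relative order $X^{-1/2}$ after the substitution $b^r\mapsto \alpha^rX^{r/2}$, and checking that the spurious pieces cancel so the remainder is exactly $M_{f,J}$. The error $O((1+\alpha^4)/X)$ is then inherited from truncating Theorem~\ref{mth} at, say, $p=J+1$, together with the $O(X^{-1})$ tail of $(2\sinh(h\P_\alpha/2))^J$.

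Finally, the ``in particular'' statement follows in one line from the main formula. Writing $a_r$ for the first argument and applying $\Delta_r|_{r=0}$, the $a$-free operators $\P_\alpha^J$, $(4\alpha_f-1+J)J\P_\alpha^J$ and $2(J+2\alpha_f)\alpha\P_\alpha^{J+1}$ all drop out, and only the single $a$-dependent summand $a\beta_f^2\P_\alpha^{J+2}$ survives, giving $\Delta_r|_{r=0}M_{f,J}(a_r,\P_\alpha)=(a_1-a_0)\beta_f^2\P_\alpha^{J+2}$; carrying the prefactor $-1/(2\beta_f\sqrt X)$ produces $\tfrac{(a_0-a_1)\beta_f}{2\sqrt X}\P_\alpha^{J+2}$ at the midpoint, with the error still $O((1+\alpha^4)/X)$ as a difference of two such quantities. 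The hypothesis $-\mu J\ne 2b$ keeps the midpoint away from the origin, where for even $J$ the operator $\P_\alpha^J$ would annihilate the sigmoid and the stated leading term would degenerate.
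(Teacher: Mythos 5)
Your small-$b$ computation is essentially the paper's own argument (your Stirling-number identity $\Delta_u^J\big|_{u=0}u^{\ell}=J!\,\mathcal{S}(\ell,J)$ is Lemma \ref{lem32}, the coefficient bookkeeping against \eqref{eqccc} is Subsections \ref{sec33}--\ref{sec42}, and your $\sinh$-shift to the midpoint plays the role of the Taylor shifts \eqref{eqtlem1}--\eqref{eqtlem2}), and your one-line deduction of the ``in particular'' statement is correct. But there is a genuine gap: you cannot cover the whole range $2b=o(X^{3/4})$ by Theorem \ref{mth} alone. The error in Theorem \ref{mth} is
$O\bigl((X^{-p}+\alpha^{2p})X^{-p/2}\bigr)=O\bigl(X^{-3p/2}+(\alpha^{2}/\sqrt{X})^{p}\bigr)$
relative to the sigmoid, and this error does \emph{not} shrink under $J$-fold differencing, whereas the differenced main term is of relative size $X^{-J/2}$. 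So your truncation must satisfy $(\alpha^{2}/\sqrt{X})^{p}\ll X^{-J/2-1}(1+\alpha^{4})$ for some fixed $p$. Take $b\asymp X^{3/4}/\log X$, which is admissible since it is $o(X^{3/4})$: then $\alpha^{2}/\sqrt{X}\asymp(\log X)^{-2}$, so $(\alpha^{2}/\sqrt{X})^{p}$ decays only polylogarithmically, and the requirement fails for every fixed $p$ as soon as $J\ge 1$ (your suggestion $p=J+1$ is in any case too small even for moderate $b$; the paper needs $p=J+3$). This is exactly why the paper proves Theorem \ref{propm} in two regimes: for $0\le b\le\sqrt{X}(\log X)^{2}$ it argues as you do, while for $\sqrt{X}(\log X)^{2}<b=o(X^{3/4})$ it abandons Theorem \ref{mth} entirely and instead uses the large-$b$ expansion of Theorem \ref{thm1} around $f(X-b)$, converts back to the normalization $f(X)$ via Proposition \ref{lem22} and Corollary \ref{cor21}, and exploits that in this range one may replace $(1+e^{\alpha})^{-1}$ by $e^{-\alpha}$ up to errors that are negligible against $(1+\alpha^{4})X^{-1}e^{-\alpha}$. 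Without an argument of this second kind your proof is incomplete on a nonempty part of the stated range of $b$.

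A further small caution: the expansion of Theorem \ref{mth} is not literally a function of the shifted variable $\alpha_c=(b+u\mu)\beta_f/(2\sqrt{X})$ --- the $u$-dependence sits polynomially in the coefficients $\cL_{f,g}(u\mu,a,b,\P_\alpha)$, with the evaluation point $\alpha=b\beta_f/(2\sqrt{X})$ fixed --- so your operator identity $\Delta_u^J\big|_{u=0}=e^{Jh\P_\alpha/2}\bigl(2\sinh(h\P_\alpha/2)\bigr)^J$ cannot simply be ``applied'' to the expansion; it must be reconciled with the Stirling-number computation term by term, which is precisely the bookkeeping the paper carries out before invoking \eqref{eqtlem1}--\eqref{eqtlem2}. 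Finally, the hypothesis $2b\neq-\mu J$ does not keep the midpoint ``away'' from the origin (it may still tend to $0$); it only excludes the single degenerate evaluation point, and the asymptotic formula with its explicit error term remains the actual content there.
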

From Theorem \ref{thm1} and Theorem \ref{propm} we have
\begin{theorem}\label{propm'}Let $J\in\{0,1,2\}$, $a,c\in\rb_+$ and $\mu\in\rb\setminus\{0\}$ be given.
For $b, X\ge 0$ such that $|2b+\mu J|\ge c$ we have
\begin{align*}
\frac{\Delta_u^J\big|_{u=0}S_f(a,b+u\mu;X+b)}{X^{-J/2}(\mu \beta_f/2)^Jf(X)}
\sim e^{\alpha}\P_{\alpha}^J
\Big|_{\alpha=\frac{(2b+\mu J)\beta_f}{4\sqrt{X}}}\left\{\frac{1}{1+e^{\alpha}}\right\},
\end{align*}
as $X\rrw +\infty$.
\end{theorem}

\subsection{Organization of the paper}
This paper is organized as follows. In Section \ref{sec2}, we prove some results on the asymptotics of $f(X)$ defined by \eqref{fdef} and
the \emph{false theta functions} defined by \eqref{PTF}.
In Section \ref{sec3}, we prove Theorems \ref{mth} and \ref{thm1}. In Section \ref{sec4}, we prove Theorems \ref{propm} and \ref{propm'}.
In Section \ref{sec5}, we prove Theorems \ref{znh1}--\ref{znh5}, and illustrate results for the cases
of $b_{m,1}(n)$ and $N_{2}(m,n)-N_{2}(m+1,n)$ numerically.

\medskip

\paragraph{\bf Notations.}The symbols $\nb, \nb_0$, $\rb$ and $\rb_+$ denote the set
of positive integer, nonnegative integer, real and positive real numbers, respectively. $\P_{\alpha}=\frac{\,d}{\,d\alpha}$ is
the usual derivative operator. We use $Y=O(X)$ or $Y\ll X$ to
denote $Y\le C X$, and $Y\gg X$ to denote $Y\ge C X$, for some absolute real number $C\in\rb_+$.


\section{Primarily}\label{sec2}
In this section, we employ the definitions and elementary properties from \cite[Section 2.1]{NIST:DLMF} on asymptotic expansions.
We investigate some asymptotic properties of $f(X)$ defined by \eqref{fdef} and the asymptotics of the following false theta function:
\begin{equation}\label{PTF}
T_{a, b}(z)=\sum_{n\ge 1}(-1)^{n-1}e^{-(an^2+bn)z},
\end{equation}
where $a\in\rb_+$, $b\in\rb$ and $z\in\cb$ with $\Re(z)>0$.

We give some explanations for the main results of this section. In view of \eqref{eq1} and our aim of this paper, the intentions of Proposition \ref{lem22} and
Corollary \ref{cor21} in the following are necessary. Now, if we using Proposition \ref{lem22} in \eqref{eq1} then we find that we need the uniform asymptotics
of $T_{a, b}(z)$, see Proposition \ref{pro31} of Section \ref{sec3} below. More precisely, we need uniform asymptotics in which uniformly for all $b$ such
that $0\le b=o(z^{-3/2})$, as real $z\rrw 0^+$. Hence such uniform asymptotics stated in Theorem \ref{thmft} below plays a crucial role in the proof of our main
theorem Theorem \ref{mth}.

The false theta function \eqref{PTF} has recently appeared in several areas of the theory of $q$-series, integer partitions
and quantum topology. In all of these aspects, it is important to understand the asymptotic properties of \eqref{PTF}.

At present, the tools to obtain the asymptotics for \eqref{PTF} are the Euler--Maclaurin summation formula and Mellin transform. We reference Zagier \cite{MR2257528} and
Bringmann et al. recent work \cite{MR3210725, MR3597015, BJM20} on the Euler--Maclaurin summation formula, and Berndt and Kim \cite{MR3103192} and Mao \cite{MR2823024} on the Mellin transform.

However, the above literature just deals with the asymptotic expansion of false theta function \eqref{PTF} with $a, b$ fixed as $z\rrw 0$. In fact, from \cite[Corollary 5.1]{BJM20} it is easy to
find that
\begin{equation}\label{eqft}
2e^{-b^2z/4a}(1-T_{a,b}(z))\sim \sum_{n\ge 0}\frac{E_{n}(b/2a)H_{n}(0)}{n!}(az)^{n/2},
\end{equation}
for given real numbers $a>0, b\in\rb$, as $z\rrw 0^+$. Here $E_n(\cdot)$ is the $n$-th \emph{Euler polynomial} which has degree $n$ and
$$H_n(0)=\P_{t}^n\big|_{t=0}e^{-t^2}=\frac{(-1)^{n/2}n!}{(n/2)!}{\bf 1}_{\rm n~ even}.$$
is the $n$-th \emph{Hermite number}. In particular, for any given integer $n\ge 0$ and real number $z, a>0$,
$$\frac{E_{2n}(b/2a)H_{2n}(0)}{(2n)!}(az)^{n}\sim \left(\frac{b}{2a}\right)^{2n}\frac{(-az)^n}{n!}=\frac{(-1)^n}{n!(4a)^n}(b^2z)^n,$$
as $b\rrw \infty$. The best, we can expect that the asymptotic expansion \eqref{eqft} holds uniformly for all real number $b,z>0$ such that $b^2z\rrw 0$, that is $b=o(z^{-1/2})$.
However, such a result is still not enough for our purpose because we need asymptotics that holds for all $b=o(z^{-3/2})$.

Luckily, since the series expansion \eqref{PTF} is alternating, we can employ the classical Taylor theorem and Euler transform (Lemma \ref{lem41}) to find the
asymptotics (Theorem \ref{thmft}) which uniformly holds all $b\ge 0$.  See Subsection \ref{sec22} for details.

\subsection{Shift of the asymptotic expansion of \texorpdfstring{$f(X)$}{Lg}}

We begin with the following asymptotic result for a shift of an asymptotic expansion, which will be used to deduce the approximation for
$f(X+r)$ with $r=o(X^{3/4})$.
\begin{proposition}\label{lem22}If $r=o(X^{3/4})$ then $f(X+r)$ has an asymptotic expansion of the form
\begin{align*}
\frac{f(X+r)}{f(X)}=\left(\sum_{0\le j<p}X^{-3j/4}\Lambda_{j}(f,X)\P_y^j+O\left(\left|\frac{r}{X^{3/4}}\right|^p\right)\right)
\bigg|_{y=\frac{\beta_f}{2\sqrt{X}}}e^{y r},
\end{align*}
as $X\rrw +\infty$, for each $p\ge 1$. Here $\Lambda_{j}(k,X)$ has an asymptotic expansion of the form
$$\Lambda_{j}(f,X)\sim \sum_{n\ge 0}\lambda_{n,j}(f)X^{-n/4}$$
for some constants $\lambda_{n,j}(f)\in\cb$ depending only on $n$, $j$ and $f$ are defined as \eqref{ldef}. In particular,
$\lambda_{0,0}(f)=1,\lambda_{n,0}(f)=0~\mbox{for all}~ n\ge 1,$ $\lambda_{1,1}(f)=
-\alpha_f$, $\lambda_{0,2}(f)=-{\beta_f}/{8}$ and $\lambda_{n,j}(f)=0$ for all nonnegative integers $n$ and $j$ such that $n\not\equiv j\bmod 2$.
\end{proposition}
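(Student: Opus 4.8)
The plan is to start from the defining asymptotic expansion \eqref{fdef} and substitute $X+r$ for $X$, then factor out $f(X)$. Writing $f(X+r)=\frac{e^{\beta_f\sqrt{X+r}}}{(X+r)^{\alpha_f}}\sum_{n\ge 0}\gamma_n(f)(X+r)^{-n/2}$, I would divide by $f(X)$ to obtain three factors: an exponential factor $e^{\beta_f(\sqrt{X+r}-\sqrt{X})}$, an algebraic factor $(1+r/X)^{-\alpha_f}$ coming from the power $X^{-\alpha_f}$, and the ratio of the two asymptotic series. Since $r=o(X^{3/4})$, the natural small parameter is $r/X^{3/4}$, and the key is to expand each factor in powers of this parameter while keeping track of the exponential.

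The main mechanism I would use is the identity $e^{yr}$ with $y=\beta_f/(2\sqrt{X})$ as the ``carrier'' of the dominant exponential growth, treating $\P_y=d/dy$ as the tool that converts polynomial-in-$r$ coefficients into derivatives acting on $e^{yr}$. Concretely, I would first expand $\beta_f\sqrt{X+r}=\beta_f\sqrt{X}\sqrt{1+r/X}=\beta_f\sqrt{X}\big(1+\tfrac{r}{2X}-\tfrac{r^2}{8X^2}+\cdots\big)$, so that $\beta_f(\sqrt{X+r}-\sqrt{X})=\frac{\beta_f r}{2\sqrt{X}}-\frac{\beta_f r^2}{8X^{3/2}}+\cdots = yr-\frac{\beta_f r^2}{8X^{3/2}}+\cdots$. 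The leading term $yr$ reproduces $e^{yr}$ exactly; the remaining exponential corrections, together with the algebraic factor $(1+r/X)^{-\alpha_f}=1-\alpha_f\tfrac{r}{X}+\cdots$ and the series ratio, contribute polynomial-in-$r$ factors that I would reorganize as powers $r^j$ against negative powers of $X$. The observation that each factor of $r$ is paired (through the expansions of $\sqrt{1+r/X}$, $(1+r/X)^{-\alpha_f}$, and $(1+r/X)^{-n/2}$) with at least a factor $X^{-1/4}$ relative to $X^{-1/2}$-scaling is what produces the organizing parameter $X^{-3j/4}$: writing $r^j\cdot X^{-j}=(r/X^{3/4})^j\cdot X^{-j/4}$ lets me absorb the genuinely small quantity $(r/X^{3/4})^j$ and collect the residual $X$-powers into the coefficient $\Lambda_j(f,X)$. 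I would then replace each monomial $r^j$ by $\P_y^j$ acting on $e^{yr}$ (since $\P_y^j e^{yr}=r^j e^{yr}$), which packages the whole thing in the stated operator form.

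To pin down the coefficients, I would define $\Lambda_j(f,X)$ as the formal coefficient of $X^{-3j/4}\P_y^j$ after this reorganization and read off its own asymptotic expansion $\sum_n \lambda_{n,j}(f)X^{-n/4}$ by collecting terms order by order; this is where I expect the bookkeeping to be heaviest, and I anticipate citing the formula \eqref{ldef} (introduced by the authors) for the general coefficients rather than deriving them all by hand. The specific values are then direct checks: $\lambda_{0,0}(f)=1$ because the leading factor is $1$ when $r=0$; $\lambda_{1,1}(f)=-\alpha_f$ comes from the leading algebraic correction $-\alpha_f r/X$; $\lambda_{0,2}(f)=-\beta_f/8$ comes from the $-\frac{\beta_f r^2}{8X^{3/2}}$ term in the exponential, since that pairs $r^2$ with $X^{-3/2}=X^{-3\cdot 2/4}$ and contributes to the $j=2$, $n=0$ slot with coefficient $-\beta_f/8$. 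The parity statement $\lambda_{n,j}(f)=0$ for $n\not\equiv j\pmod 2$ I would justify by noting that $\beta_f$ enters only through $\sqrt{X}$, so every half-integer power of $X$ generated in the $\P_y^j$ term carries a matching parity; tracking which expansions produce $X^{-1/4}$ versus $X^{-1/2}$ steps shows the allowed $(n,j)$ lie on one parity class.

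The hard part will be controlling the error term uniformly and proving it is genuinely $O\big(|r/X^{3/4}|^p\big)$ rather than merely formal. The issue is that $r$ can grow (up to $o(X^{3/4})$), so the tail of the Taylor expansions in $r/X$ must be bounded with the factor $r^p$ made explicit; I would use Taylor's theorem with remainder on each of the three factors, bound the remainders by the next power of $r/X^{3/4}$ times a constant depending on $p$, and then invoke the hypothesis $r=o(X^{3/4})$ to guarantee these remainders are small. I would also need to verify that the asymptotic series for $f$ can be truncated and the tail absorbed into the error, using the definition of asymptotic expansion from \cite[Section 2.1]{NIST:DLMF}; the interplay between truncating the $\gamma_n$-series and truncating the $r$-expansion — so that the combined error is uniformly $O\big(|r/X^{3/4}|^p\big)$ — is the delicate estimate that the rest of the proof rests on.
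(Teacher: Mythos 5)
Your proposal is correct and follows essentially the same route as the paper: both factor $f(X+r)/f(X)$ into the exponential piece (extracting $e^{\beta_f r/(2\sqrt{X})}=e^{yr}$ and expanding the remainder $\beta_f\sqrt{X}(\sqrt{1+r/X}-1-\tfrac{r}{2X})$), the algebraic piece, and the series ratio, then reorganize via $r^jX^{-j}=(r/X^{3/4})^jX^{-j/4}$ and package monomials $r^j$ as $\P_y^j$ acting on $e^{yr}$, with the same coefficient checks ($\lambda_{1,1}=-\alpha_f$ from the $-\alpha_f r/X$ correction, $\lambda_{0,2}=-\beta_f/8$ from the $-\beta_f r^2/(8X^{3/2})$ exponential term) and the same parity mechanism (the constraint $2(\ell-h)+n=j$ in \eqref{ldef}). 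The only cosmetic differences are that the paper merges the factor $(X+r)^{-\alpha_f}$ with the $\gamma_n$-series ratio into a single binomial expansion \eqref{gke}, and it controls errors through the formal calculus of asymptotic expansions with gauge functions $(r/X^{3/4})^j$ rather than the explicit Taylor remainders you plan to write out.
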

\begin{proof}
First of all, since $r=o(X^{3/4})$, we have
$$\sqrt{X}\left(\sqrt{1+\frac{r}{X}}-1-\frac{r}{2X}\right)=\sqrt{X}\sum_{i\ge 2}\binom{1/2}{i}\left(\frac{r}{X}\right)^i
=O\left(\frac{r^2}{X^{3/2}}\right)=o(1),$$
by the generalized binomial theorem, and hence we have
\begin{align}\label{eq22}
e^{\beta_f\sqrt{X}\left(\sqrt{1+\frac{r}{X}}-1-\frac{r}{2X}\right)}&=\sum_{\ell\ge 0}\frac{(\beta_fX^{1/2})^{\ell}}{\ell !}
\left(\sum_{i\ge 2}\binom{1/2}{i}\left(\frac{r}{X}\right)^i\right)^{\ell}\nonumber\\
&=:\sum_{\ell\ge 0}r^{2\ell}X^{-3\ell/2}\sum_{k\ge 0}d_{k,\ell}(f)\left(\frac{r}{X}\right)^k,
\end{align}
by using the Taylor expansion of $e^x$. Also, since $|r/X|=o(1)$, we have
$$(X+r)^{-n/2-\alpha_f}=X^{-n/2-\alpha_f}\sum_{d\ge 0}\binom{-n/2-\alpha_f}{d}\left(\frac{r}{X}\right)^d,$$
by the generalized binomial theorem, and if we define $c_{d,i}(f)$ for each $d, i\in\nb_0$ that
\begin{equation}\label{gke}
\sum_{i\ge 0}c_{d,i}(f)X^{-i/2}:=\left(\sum_{n\ge 0}\frac{\gamma_n(f)}{X^{n/2}}\right)^{-1}
\left(\sum_{n\ge 0}\binom{-n/2-\alpha_f}{d}\frac{\gamma_n(f)}{X^{n/2}}\right),
\end{equation}
formally, then
\begin{equation}\label{eq23}
\left(\sum_{n\ge 0}\frac{\gamma_n(f)}{X^{n/2+\alpha_f}}\right)^{-1}
\sum_{n\ge 0}\frac{\gamma_n(f)}{(X+r)^{n/2+\alpha_f}}\sim \sum_{d\ge 0}\left(\frac{r}{X}\right)^{d}\sum_{i\ge 0}c_{d,i}(f)X^{-i/2},
\end{equation}
by the basic result of asymptotic analysis. From \eqref{eq22} and \eqref{eq23}, we have
\begin{align*}
\frac{f(X+r)}{f(X)}=&\exp\left(\frac{\beta_fr}{2\sqrt{X}}\right)e^{\beta_f\sqrt{X}\left(\sqrt{1+\frac{r}{X}}-1-\frac{r}{2X}\right)}
\left(\sum_{n\ge 0}\frac{\gamma_n(f)}{X^{\frac{n}{2}+\alpha_f}}\right)^{-1}\sum_{n\ge 0}\frac{\gamma_n(f)}{(X+r)^{\frac{n}{2}+\alpha_f}}\\
\sim &\exp\left(\frac{\beta_fr}{2\sqrt{X}}\right)\sum_{k,\ell\ge 0}d_{k,\ell}(f)\frac{r^{k+2\ell}}{X^{3\ell/2+k}}
\sum_{d,i\ge 0}c_{d,i}(f)\frac{r^{d}}{X^{d+i/2}}\\
\sim & \exp\left(\frac{\beta_fr}{2\sqrt{X}}\right)\sum_{j\ge 0}r^j\sum_{\substack{k,\ell,d,i\ge 0\\ k+2\ell+d=j}}
X^{-3\ell/2-k-d-i/2}c_{d,i}(f)d_{k,\ell}(f)\\
\sim & \exp\left(\frac{\beta_fr}{2\sqrt{X}}\right)\sum_{j\ge 0}r^jX^{-j}\sum_{\substack{k,\ell,d,i\ge 0\\ k+2\ell+d=j}}
X^{(\ell-i)/2}c_{d,i}(f)d_{k,\ell}(f).
\end{align*}
Namely,
\begin{align*}
\frac{f(X+r)}{f(X)}
\sim \exp\left(\frac{\beta_fr}{2\sqrt{X}}\right)\sum_{j\ge 0}\left(\frac{r}{X^{3/4}}\right)^j\sum_{n\ge 0}X^{-n/4}
\sum_{\substack{k,d,\ell, i\ge 0\\2(\ell-i)+n=j,~ k+2\ell+d=j}}c_{d,i}(f)d_{k,\ell}(f).
\end{align*}
Which means that we have an asymptotic expansion with respect to the \emph{asymptotic sequence}\footnote{For the definition of asymptotic sequence, see \cite[Section 2.1(v)]{NIST:DLMF}.}
$\{\left(X^{-3/4}r\right)^j\}$ of the form
\begin{equation*}
\frac{f(X+r)}{f(X)}\sim e^{\frac{\beta_fr}{2\sqrt{X}}}\sum_{j\ge 0}\Lambda_{j}(f,X) \left(\frac{r}{X^{3/4}}\right)^j,
\end{equation*}
where
$$\Lambda_{j}(f,X)\sim \sum_{n\ge 0}\lambda_{n,j}(f)X^{-n/4}$$
for some constants $\lambda_{n,j}(f)$ given by
\begin{equation}\label{ldef}
\lambda_{n,j}(f)=\sum_{\substack{k,d,\ell, i\ge 0\\2(\ell-i)+n=j,~ k+2\ell+d=j}}c_{d,i}(f)d_{k,\ell}(f)\in\rb,
\end{equation}
with $c_{d,i}(f)$ and $d_{k,\ell}(f)$ are defined by \eqref{eq22} and \eqref{gke}, respectively.  This completes the proof.
\end{proof}

From the above proposition, we derive the following corollary, which will be used in the proof of Theorem \ref{thm1}.
\begin{corollary}\label{cor21}Let $u\in\rb$ with $u= O(1)$. We have an asymptotic expansion of the form
\begin{align*}
\frac{f(X+u)}{f(X)}&\sim \sum_{d\ge 0}X^{-d/2}\sum_{\substack{k,j,n\ge 0\\ n+2k+3j=2d}}
\frac{\lambda_{n,j}(f)\left({\beta_f}/{2}\right)^{k}}{k !}
u^{k+j}
\end{align*}
as $X\rrw +\infty$.
\end{corollary}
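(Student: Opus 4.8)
The plan is to obtain this as an immediate specialization of Proposition \ref{lem22} to the case $r=u$ with $u=O(1)$. Since any $u=O(1)$ satisfies $u=o(X^{3/4})$, the proposition applies; moreover the error term $O(|u/X^{3/4}|^{p})=O(X^{-3p/4})$ is negligible for \emph{every} $p\ge 1$, so the finite sum in Proposition \ref{lem22} upgrades to a genuine asymptotic series to all orders. The remaining work is purely the expansion and regrouping of the resulting triple series.

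First I would evaluate the differential operator explicitly. Because $\P_y^{j}e^{yr}=r^{j}e^{yr}$, setting $y=\beta_f/(2\sqrt{X})$ and $r=u$ gives $\P_y^{j}e^{yr}\big|_{y=\beta_f/(2\sqrt{X})}=u^{j}e^{\beta_f u/(2\sqrt{X})}$, so that
$$\frac{f(X+u)}{f(X)}\sim e^{\beta_f u/(2\sqrt{X})}\sum_{j\ge 0}u^{j}X^{-3j/4}\Lambda_{j}(f,X).$$
Next I would substitute the expansion $\Lambda_{j}(f,X)\sim\sum_{n\ge 0}\lambda_{n,j}(f)X^{-n/4}$ supplied by Proposition \ref{lem22}, and Taylor-expand the exponential factor as $e^{\beta_f u/(2\sqrt{X})}=\sum_{k\ge 0}\frac{(\beta_f u/2)^{k}}{k!}X^{-k/2}$. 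Multiplying the three series, a generic term carries the power $X^{-(2k+3j+n)/4}$ with coefficient $\frac{(\beta_f/2)^{k}}{k!}\lambda_{n,j}(f)\,u^{k+j}$, where the factor $u^{k+j}$ arises as $u^{k}$ (from the exponential) times $u^{j}$ (from the operator). Finally I would collect terms by the total exponent, writing $2k+3j+n=2g$ so that each block is of order $X^{-g/2}$, which reproduces exactly the claimed formula.

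The main point requiring care is the bookkeeping in the final rearrangement: one must justify that multiplying two asymptotic series in powers of $X^{-1/4}$ against one in powers of $X^{-1/2}$ and regrouping by $X^{-g/2}$ is legitimate. This follows from the standard multiplication theorem for asymptotic expansions (the same tool already invoked in the proof of Proposition \ref{lem22}), together with the observation that for each fixed $g$ the set of triples $(k,j,n)\in\zb_{\ge 0}^{3}$ with $2k+3j+n=2g$ is finite; hence the coefficient of each gauge function $X^{-g/2}$ is a well-defined finite sum and the resulting series is a bona fide asymptotic expansion as $X\rrw+\infty$.
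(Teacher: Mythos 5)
Your proposal follows exactly the paper's route: specialize Proposition \ref{lem22} to $r=u$, use $\P_y^j e^{yu}=u^j e^{yu}$, expand $\Lambda_j(f,X)$ and the exponential factor, multiply the series, and regroup. However, there is one genuine gap in the final regrouping step. After multiplying the three series, the generic term has order $X^{-(n+2k+3j)/4}$, where the total exponent $n+2k+3j$ ranges over \emph{all} nonnegative integers, odd as well as even. You then simply write $n+2k+3j=2g$, as if only even total exponents occurred; but that is not automatic, and it is exactly what needs to be proved. The claimed formula sums only over triples with $n+2k+3j=2g$, so if any term with odd total exponent had a nonzero coefficient, the corollary would be false as an asymptotic expansion: for instance a nonzero term of order $X^{-1/4}$ is not $o(X^{-1/2})$, so the difference between $f(X+u)/f(X)$ and the partial sums of the claimed series would fail to be of the required order already at $g=1$.

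The missing ingredient is the parity statement recorded at the end of Proposition \ref{lem22}: $\lambda_{n,j}(f)=0$ whenever $n\not\equiv j \bmod 2$. Since $n+2k+3j\equiv n+j \pmod 2$, every term with odd total exponent carries a factor $\lambda_{n,j}(f)$ with $n\not\equiv j\bmod 2$ and hence vanishes; only the even exponents $n+2k+3j=2g$ survive, and the regrouping into powers of $X^{-1/2}$ is then legitimate. This is precisely the step the paper's proof makes explicit (``by note that $\lambda_{n,j}=0$ for $n\not\equiv j\bmod 2$''). The fix is one sentence, and everything else in your argument (the reduction $u=O(1)\Rightarrow u=o(X^{3/4})$, the evaluation of the differential operator, the multiplication of asymptotic series, and the finiteness of the index set for each fixed $g$) is correct.
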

\begin{proof}By Proposition \ref{lem22} we have
\begin{align*}
\frac{f(X+u)}{f(X)}&\sim e^{\frac{\beta_fu}{2\sqrt{X}}}\sum_{j\ge 0}u^jX^{-3j/4}
\sum_{n\ge 0}\lambda_{n,j}(f)X^{-n/4}\\
&\sim \sum_{k,j,n\ge 0}X^{-\frac{n+2k+3j}{4}}\frac{\lambda_{n,j}(f)\left({\beta_f}/{2}\right)^{k}}{k !}
u^{k+j},
\end{align*}
as $X\rrw +\infty$. Further, by noting that $\lambda_{n,j}=0$ for $n\not\equiv j\bmod 2$, we find that
\begin{align*}
\frac{f(X+u)}{f(X)}&\sim \sum_{d\ge 0}X^{-d/2}\sum_{\substack{k,j,n\ge 0\\ n+2k+3j=2d}}
\frac{\lambda_{n,j}(f)\left({\beta_f}/{2}\right)^{k}}{k !}
u^{k+j},
\end{align*}
which completes the proof of the corollary.
\end{proof}

\subsection{Uniform asymptotics of a false theta function}\label{sec22}

In this subsection we investigate the uniform asymptotics of the false theta function \eqref{PTF}. We first deduce the following proposition.
\begin{proposition}\label{lemth}Let $\alpha, z\in\cb$ with $\Re(z)>0$, $\alpha\not\in 2\pi\ri(\zb+1/2)$, $\ell\in\nb_0$ and $N\in\nb$. We have
$$\sum_{n\ge 1}(-1)^{n-1}n^{\ell}e^{-n^2z-n\alpha}=(-1)^{\ell}\sum_{k=0}^{N-1}\frac{(-z)^{k}}{k !}\P_{\alpha}^{2k+\ell}
\left(\frac{1}{1+e^{\alpha}}\right)+z^{N}R_{N}(\ell,\alpha,z),$$
where,
\begin{equation*}
R_{N}(\ell,\alpha,z)=\frac{(-1)^{N+\ell}}{(N-1)!}\int_{0}^1(1-t)^{N-1}\P_{\alpha}^{2N+\ell}
\bigg(\sum_{n\ge 1}(-1)^{n-1}e^{-n^2zt-n\alpha}\bigg)\,dt.
\end{equation*}
\end{proposition}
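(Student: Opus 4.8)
The plan is to apply the classical Taylor theorem with integral remainder to the \emph{entire} function $z\mapsto e^{-n^2z}$ for each fixed $n\ge 1$, and then to sum the resulting identities against the weight $(-1)^{n-1}n^{\ell}e^{-n\alpha}$. Two elementary facts drive everything. First, since $\P_{\alpha}^{m}e^{-n\alpha}=(-n)^{m}e^{-n\alpha}=(-1)^{m}n^{m}e^{-n\alpha}$, any polynomial weight $n^{m}$ can be generated by differentiating in $\alpha$, that is $n^{m}e^{-n\alpha}=(-1)^{m}\P_{\alpha}^{m}e^{-n\alpha}$. Second, the geometric series yields $\sum_{n\ge 1}(-1)^{n-1}e^{-n\alpha}=e^{-\alpha}/(1+e^{-\alpha})=1/(1+e^{\alpha})$ for $\Re(\alpha)>0$, whose $\alpha$-derivatives are exactly the functions appearing on the right-hand side of the statement.

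Concretely, I would begin from the $N$-term Taylor expansion with integral remainder
$$e^{-n^2z}=\sum_{k=0}^{N-1}\frac{(-n^2z)^k}{k!}+\frac{1}{(N-1)!}\int_0^z(z-s)^{N-1}(-n^2)^{N}e^{-n^2s}\,ds,$$
which is valid because $\phi(z)=e^{-n^2z}$ satisfies $\phi^{(k)}(z)=(-n^2)^{k}e^{-n^2z}$. The substitution $s=zt$ turns the remainder into $\tfrac{(-n^2)^{N}z^{N}}{(N-1)!}\int_0^1(1-t)^{N-1}e^{-n^2zt}\,dt$, which is where the regularizing factor $e^{-n^2zt}$ of the statement comes from. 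Multiplying through by $(-1)^{n-1}n^{\ell}e^{-n\alpha}$ and summing over $n\ge 1$, the polynomial part contributes $\sum_{k=0}^{N-1}\tfrac{(-z)^k}{k!}\sum_{n\ge 1}(-1)^{n-1}n^{2k+\ell}e^{-n\alpha}$, and by the first fact each inner sum equals $(-1)^{2k+\ell}\P_{\alpha}^{2k+\ell}\sum_{n\ge 1}(-1)^{n-1}e^{-n\alpha}=(-1)^{\ell}\P_{\alpha}^{2k+\ell}\!\left(1/(1+e^{\alpha})\right)$; this is precisely the main sum in the proposition.

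For the remainder the same manipulation gives
$$\frac{(-1)^{N}z^{N}}{(N-1)!}\int_0^1(1-t)^{N-1}\sum_{n\ge 1}(-1)^{n-1}n^{2N+\ell}e^{-n^2zt-n\alpha}\,dt,$$
and writing $n^{2N+\ell}e^{-n\alpha}=(-1)^{\ell}\P_{\alpha}^{2N+\ell}e^{-n\alpha}$ and pulling $\P_{\alpha}^{2N+\ell}$ outside the $n$-sum (legitimate since the factor $e^{-n^2zt}$ is $\alpha$-free) reproduces exactly $z^{N}R_{N}(\ell,\alpha,z)$, matching the sign $(-1)^{N+\ell}$. The main technical point to verify is the interchange of the sum over $n$ with the finite $k$-sum (immediate) and with the $t$-integral: for $\Re(z)>0$ one has $\int_0^1(1-t)^{N-1}e^{-n^2\Re(z)t}\,dt\le (n^2\Re(z))^{-1}$, so when $\Re(\alpha)>0$ the double series/integral converges absolutely and Fubini applies, while the termwise $\P_{\alpha}$-differentiation is justified by the super-exponential decay $e^{-n^2\Re(z)t}$ for $t>0$. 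Finally, for fixed $z$ with $\Re(z)>0$ both sides are analytic in $\alpha$ (the left side is entire in $\alpha$, and the right side is meromorphic with poles only at $\alpha\in\ri\pi(2\zb+1)$), so the identity extends from the half-plane $\Re(\alpha)>0$ to all admissible $\alpha$ by analytic continuation. I expect this convergence-and-continuation bookkeeping, rather than the algebra, to be the only delicate part.
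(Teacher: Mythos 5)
Your proposal is correct and takes essentially the same route as the paper's proof: both arguments rest on Taylor's theorem in $z$ with integral remainder, on the identity $\sum_{n\ge 1}(-1)^{n-1}n^{m}e^{-n\alpha}=(-1)^{m}\P_{\alpha}^{m}\left(\frac{1}{1+e^{\alpha}}\right)$ valid for $\Re(\alpha)>0$, and on analytic continuation in $\alpha$ to remove that restriction. The only cosmetic difference is that the paper applies Taylor's theorem once to the summed function $F(z)=\sum_{n\ge 1}(-1)^{n-1}n^{\ell}e^{-n^2z-n\alpha}$, computing $F^{(k)}(0^+)$ by termwise differentiation, whereas you expand each $e^{-n^2z}$ individually and then interchange the sum with the $t$-integral via Fubini; the same convergence estimates justify both versions.
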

\begin{proof}Obviously, both sides of the equality we need prove are holomorphic with respect to $\alpha\in\cb\setminus 2\pi\ri(\zb+1/2)$.
Thus we just need to prove the cases of $\Re(\alpha)>0$, then the proof follows from the identity theorem of analytic continuation.
Suppose that $\Re(\alpha)>0$ and denote
$F_{\ell}(z)=\sum_{n\ge 1}(-1)^{n-1}n^{\ell}e^{-n^2z-n\alpha}$. We have for each $k\in\nb_0$,
\begin{align*}
F_{\ell}^{(k)}(0^+)&:=\lim_{z\rrw 0^+}F_{\ell}^{(k)}(z)\\
&=\lim_{z\rrw 0^+}\sum_{n\ge 1}(-1)^{n-1+k}n^{2k+\ell}e^{-n^2z-n\alpha}\\
&=(-1)^k\sum_{n\ge 1}(-1)^{n-1}n^{2k+\ell}e^{-n\alpha}\\
&=(-1)^{k+\ell}\P_{\alpha}^{2k+\ell}\sum_{n\ge 1}(-1)^{n-1}e^{-n\alpha},
\end{align*}
that is
\begin{align*}
F_{\ell}^{(k)}(0^+)=(-1)^{k+\ell}\P_{\alpha}^{2k+\ell}\left(\frac{1}{1+e^{\alpha}}\right).
\end{align*}
Thus from Taylor's theorem we have
\begin{align*}
F_{\ell}(z)&=\sum_{k=0}^{N-1}\frac{1}{k!}F_{\ell}^{(k)}(0^+)(z-0^+)^k+\int_{0}^{z}\frac{F_{\ell}^{(N)}(t)}{(N-1)!}(z-t)^{N-1}\,dt\\
&=(-1)^{\ell}\sum_{k=0}^{N-1}\frac{(-z)^{k}}{k !}\P_{\alpha}^{2k+\ell}\left(\frac{1}{1+e^{\alpha}}\right)+z^{N}R_{N}(\ell,\alpha,z),
\end{align*}
holds for each $N\in\nb$, with
\begin{align*}
R_{N}(\ell,\alpha,z)&=\frac{1}{(N-1)!}\int_{0}^1\P_{u}^{N}\big|_{u=zt}\left(F_{\ell}(u)\right)(1-t)^{N-1}\,dt\\
&=\frac{(-1)^{N}}{(N-1)!}\int_{0}^1(1-t)^{N-1}\sum_{n\ge 1}(-1)^{n-1}n^{2N+\ell}e^{-n^2zt-n\alpha}\,dt\\
&=\frac{(-1)^{N+\ell}}{(N-1)!}\int_{0}^1(1-t)^{N-1}\P_{\alpha}^{2N+\ell}\bigg(\sum_{n\ge 1}(-1)^{n-1}e^{-n^2zt-n\alpha}\bigg)\,dt,
\end{align*}
which completes the proof.
\end{proof}
We next study the error term $R_{N}(\alpha,z)$ of Proposition \ref{lemth}. To estimate $R_{N}(\alpha,z)$ we need the following lemmas. We first need the following well--known Euler transform.
\begin{lemma}\label{lem41}Let $x\in \cb\setminus\{1\}$ and let $h: \zb\rrw \cb$ satisfy $\sum_{n\ge 0}|x^n h(n)|<+\infty$.
Then, we have
$$\sum_{n\ge 0}x^nh(n)=\sum_{r=0}^{K-1}\frac{x^r\Delta^rh(0)}{(1-x)^{r+1}}+\frac{x^{K}}{(1-x)^{K}}\sum_{n\ge 0}x^n
\Delta^{K}h(n),\quad K\in\nb.$$
\end{lemma}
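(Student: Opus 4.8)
The plan is to prove the identity by induction on $K$, reducing the whole statement to a single Abel-summation (index-shift) step that serves as the base case $K=1$. First I would dispose of the degenerate case $x=0$: both sides collapse to $h(0)$ (on the right only the $r=0$ term survives), so I may assume $x\neq 0$ from now on. Before running the induction I would record the convergence fact that makes every manipulation legitimate. Writing $\Delta^K h(n)=\sum_{j=0}^{K}\binom{K}{j}(-1)^{K-j}h(n+j)$ and using $|x^n h(n+j)|=|x|^{-j}\,|x^{n+j}h(n+j)|$, one gets
\[
\sum_{n\ge 0}\big|x^n\Delta^K h(n)\big|\le \sum_{j=0}^{K}\binom{K}{j}|x|^{-j}\sum_{m\ge 0}\big|x^m h(m)\big|=\left(1+|x|^{-1}\right)^K\sum_{m\ge 0}\big|x^m h(m)\big|<+\infty,
\]
so for every $K$ the series $\sum_{n\ge 0}x^n\Delta^K h(n)$ converges absolutely; this guarantees that reindexing and term-by-term rearrangement below are valid.

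For the base case $K=1$, I would set $S=\sum_{n\ge 0}x^n h(n)$ and compute, by shifting the index in $\sum_{n\ge 0}x^n h(n+1)=x^{-1}\big(S-h(0)\big)$, that
\[
\sum_{n\ge 0}x^n\Delta h(n)=\sum_{n\ge 0}x^n\big(h(n+1)-h(n)\big)=\frac{S-h(0)}{x}-S=\frac{(1-x)S-h(0)}{x}.
\]
Solving for $S$ yields $S=\dfrac{h(0)}{1-x}+\dfrac{x}{1-x}\sum_{n\ge 0}x^n\Delta h(n)$, which is precisely the claimed formula for $K=1$.

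The inductive step is then purely formal: assuming the identity for some $K\ge 1$, I would apply the $K=1$ case to the function $g(n)=\Delta^K h(n)$ (whose summability was verified above), giving $\sum_{n\ge 0}x^n\Delta^K h(n)=\frac{\Delta^K h(0)}{1-x}+\frac{x}{1-x}\sum_{n\ge 0}x^n\Delta^{K+1}h(n)$. Multiplying through by $x^K/(1-x)^K$ and substituting into the induction hypothesis, the remainder term of level $K$ splits into the new boundary term $\frac{x^K\Delta^K h(0)}{(1-x)^{K+1}}$, which extends the telescoped sum from $r\le K-1$ to $r\le K$, plus the new remainder $\frac{x^{K+1}}{(1-x)^{K+1}}\sum_{n\ge 0}x^n\Delta^{K+1}h(n)$. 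This is exactly the formula at level $K+1$, closing the induction.

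I do not expect a genuine obstacle here: the algebra is a clean telescoping induction built on one Abel-summation identity. The only point that requires care — and the one I would be sure to state explicitly — is the absolute convergence of the shifted/differenced series, since the index shift in the base case and the repeated substitution in the inductive step are only justified once $\sum_{n\ge 0}|x^n\Delta^K h(n)|<\infty$ is known for all $K$; the binomial bound above settles this uniformly.
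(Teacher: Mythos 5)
Your proof is correct, and it follows exactly the route the paper itself indicates: the paper states that Lemma \ref{lem41} (the Euler transform) ``is very easy to prove by mathematical induction'' and omits the details, which is precisely the induction on $K$ you carry out. Your write-up simply supplies those omitted details, with the welcome extra care of checking absolute convergence of $\sum_{n\ge 0}x^n\Delta^K h(n)$ via the binomial bound so that the index shift and the application of the base case to $\Delta^K h$ are justified.
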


\begin{lemma}\label{lem211}Let $r, J\in\nb_0$ be given and let $\alpha=x+\ri y$ with $x,y\in\rb$ and $|y|\le x$. We have that
$$e^{\alpha}\P_{\alpha}^{J}\left\{\frac{e^{r\alpha}}{(1+e^{\alpha})^{r+1}}\right\}\ll 1,$$
holds uniformly for all $x\ge 0$.
\end{lemma}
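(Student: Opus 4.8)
Lemma \ref{lem211} asserts that for fixed $r, J \in \zb_{\ge 0}$, the quantity
$$
e^{\alpha}\,\P_{\alpha}^{J}\!\left\{\frac{e^{r\alpha}}{(1+e^{\alpha})^{r+1}}\right\}
$$
is bounded by an absolute constant (depending on $r$ and $J$) uniformly over all $\alpha = x + \ri y$ with $x \ge 0$ and $|y| \le x$. The plan is to carry out the differentiation explicitly and then estimate the resulting rational expression in $w := e^{\alpha}$ on the sector $\{|y| \le x,\ x \ge 0\}$, where $|w| = e^{x} \ge 1$.

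\medskip

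\textbf{Step 1: Reduce to a rational function of $w=e^{\alpha}$.} First I would observe that, writing $w = e^{\alpha}$, we have $\P_{\alpha} = w\,\frac{d}{dw}$ since $\frac{dw}{d\alpha} = w$. The function $g(w) := \dfrac{w^{r}}{(1+w)^{r+1}}$ is rational in $w$. Applying $\P_{\alpha}^{J}$ produces, by the chain rule, an expression of the form $w \cdot P_{r,J}(w)$ where $P_{r,J}$ is a rational function whose numerator is a polynomial in $w$ and whose denominator is a power of $(1+w)$; one checks inductively that each application of $\P_{\alpha} = w\frac{d}{dw}$ raises the denominator power by at most one while keeping the numerator a polynomial of controlled degree. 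Concretely, I expect to show by induction on $J$ that
$$
\P_{\alpha}^{J}\!\left\{\frac{e^{r\alpha}}{(1+e^{\alpha})^{r+1}}\right\}
= \frac{Q_{r,J}(w)}{(1+w)^{r+1+J}}, \qquad w = e^{\alpha},
$$
where $Q_{r,J}$ is a polynomial of degree at most $r+J$ with integer coefficients depending only on $r$ and $J$. Multiplying by $e^{\alpha} = w$ then gives the target expression as $\dfrac{w\,Q_{r,J}(w)}{(1+w)^{r+1+J}}$.

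\medskip

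\textbf{Step 2: Estimate the rational function on the sector.} On the sector $|y| \le x$ with $x \ge 0$ we have $|w| = e^{x} \ge 1$, and moreover $\re(w) = e^{x}\cos y \ge e^{x}\cos x$ stays bounded away from the negative real axis, so that $|1+w| \gg 1 + |w|$ uniformly (the point $w = -1$ lies on the negative real axis and is never approached because $\re(w) \ge 0$ when $|y| \le x \le \pi/2$, while for larger $x$ the modulus $|w|$ is large). This is the key geometric fact: the condition $|y| \le x$ guarantees a lower bound $|1+w| \gg \max(1, |w|)$. Granting this, the numerator $|w\,Q_{r,J}(w)| \ll |w|(1 + |w|)^{r+J} \ll (1+|w|)^{r+1+J}$, while the denominator satisfies $|1+w|^{r+1+J} \gg (1+|w|)^{r+1+J}$, and the quotient is $O(1)$ uniformly.

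\medskip

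\textbf{The main obstacle.} The delicate point is establishing the uniform lower bound $|1+e^{\alpha}| \gg 1 + |e^{\alpha}|$ on the full sector $|y| \le x$, $x \ge 0$, rather than merely avoiding the single pole $\alpha \in \pi\ri + 2\pi\ri\zb$. For small $x$ (hence small $|y|$) one must check that $\re(e^{\alpha}) = e^{x}\cos y \ge 0$ keeps $1 + e^{\alpha}$ bounded below by a positive constant; here the constraint $|y| \le x$ combined with $x$ small forces $y$ small, so $\cos y$ stays near $1$. For large $x$, $|e^{\alpha}|$ dominates and $|1+e^{\alpha}| \ge |e^{\alpha}| - 1$ is comparable to $|e^{\alpha}|$; the only risk would be $e^{\alpha}$ landing near $-1$, which requires $\cos y < 0$ and hence $|y| > \pi/2$, but then $x \ge |y| > \pi/2$ makes $|e^{\alpha}| = e^{x} > e^{\pi/2} > 4$, comfortably away from the unit circle. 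Assembling these two regimes into a single clean inequality $|1+e^{\alpha}| \ge c(1 + |e^{\alpha}|)$ with an explicit absolute $c > 0$ is the crux; once it is in hand, Step 2 is immediate. I would handle it by splitting into the ranges $x \le \pi/2$ and $x > \pi/2$ as indicated and bounding $|1+e^{\alpha}|^2 = 1 + 2e^{x}\cos y + e^{2x}$ from below directly.
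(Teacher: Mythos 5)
Your proposal is correct, and it rests on the same geometric crux as the paper's proof, but the mechanics are organized differently. The paper first rewrites $\frac{e^{r\alpha}}{(1+e^{\alpha})^{r+1}}=\frac{e^{-\alpha}}{(1+e^{-\alpha})^{r+1}}$, so that on the sector every exponential in sight satisfies $|e^{-\alpha}|\le 1$; it then applies the Leibniz rule and bounds the result by $\sum_{k\le J}|1+e^{-\alpha}|^{-(r+1+k)}$, so the only analytic input needed is the lower bound $|1+e^{-\alpha}|\gg 1$, proved by exactly the case split you propose ($x\ge\pi/2$ versus $x\le\pi/2$, using $|y|\le x$ to force $\cos y\ge 0$ in the second case). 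You instead keep $w=e^{\alpha}$, prove by induction that $\P_{\alpha}^{J}$ of the function is $Q_{r,J}(w)/(1+w)^{r+1+J}$ with $\deg Q_{r,J}\le r+J$ (your induction is sound: $w\frac{d}{dw}$ raises both the numerator degree and the denominator exponent by one), and then need the stronger-looking estimate $|1+w|\gg 1+|w|$. Note the two key estimates are in fact equivalent: $|1+e^{-\alpha}|=e^{-x}|1+e^{\alpha}|$ and $1+e^{x}\asymp e^{x}$ for $x\ge 0$, so $|1+e^{-\alpha}|\gg 1$ if and only if $|1+e^{\alpha}|\gg 1+|e^{\alpha}|$; your two-regime verification of it is complete and correct. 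What each route buys: the paper's inversion trick eliminates all degree bookkeeping, since every term after Leibniz is manifestly a product of bounded factors; your version requires tracking that the numerator degree $r+J+1$ never exceeds the denominator exponent, but in exchange it exhibits the derivative explicitly as a rational function of $w$, which makes the uniformity in $\alpha$ (with constants depending only on $r,J$ through the coefficients of $Q_{r,J}$) completely transparent.
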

\begin{proof}
Since $\Re(\alpha)=x\ge 0$, we have $|e^{-\alpha}|\le 1$ and hence
\begin{align*}
e^{\alpha}\P_{\alpha}^{J}\left\{\frac{e^{r\alpha}}{(1+e^{\alpha})^{r+1}}\right\}&=e^{\alpha}\P_{\alpha}^{J}\left\{\frac{e^{-\alpha}}{(1+e^{-\alpha})^{r+1}}\right\}\\
&=e^{\alpha}\sum_{k=0}^{J}\binom{J}{k}(-1)^{J-k}e^{-\alpha}\P_{\alpha}^{k}\left\{\frac{1}{(1+e^{-\alpha})^{r+1}}\right\}\\
&\ll \sum_{k=0}^{J}\left|\P_{\alpha}^{k}\left\{\frac{1}{(1+e^{-\alpha})^{r+1}}\right\}\right|,
\end{align*}
that is
\begin{align*}
e^{\alpha}\P_{\alpha}^{J}\left\{\frac{e^{r\alpha}}{(1+e^{\alpha})^{r+1}}\right\}\ll \sum_{k=0}^{J}\frac{1}{|1+e^{-\alpha}|^{r+1+k}}.
\end{align*}
Since $|y|\le x$, we have
$$|1+e^{-\alpha}|^2=1+e^{-2x}+2e^{-x}\cos(y)\ge \begin{cases}(1-e^{-x})^2~ &if~ x\ge \pi/2,\\
1+e^{-2x}&if~|x|\le \pi/2,
\end{cases}$$
that is $|1+e^{-\alpha}|\gg 1$. Therefore,
\begin{align*}
e^{\alpha}\P_{\alpha}^{J}\left\{\frac{e^{r\alpha}}{(1+e^{\alpha})^{r+1}}\right\}\ll 1,
\end{align*}
which completes the proof of the lemma.
\end{proof}
We now estimate the error term $R_{N}(\alpha,z)$ of Proposition \ref{lemth}. We prove the following proposition.
\begin{proposition}\label{pro24}Let $R_{N}(\ell, \alpha,z)$ be defined as in Proposition \ref{lemth}, and let $\ell\in\nb_0$ and $N\in\nb$ be fixed.
For $z=x+\ri y$ with $x,y\in\rb$, $x>0$ and $|y|\le x$, we have that
\begin{align*}
e^{bz}R_{N}(\ell, bz,z)\ll 1,
\end{align*}
holds uniformly for all $b\ge 0$.
\end{proposition}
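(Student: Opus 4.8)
The plan is to reduce the estimate to a uniform bound on the single sum obtained after carrying out the differentiation inside $R_N$, and then to exploit the alternating structure through the Euler transform (Lemma~\ref{lem41}) together with the cone estimate hidden in the proof of Lemma~\ref{lem211}. First I would substitute $\alpha=bz$ and compute the derivative: since $\P_\alpha^{2N+\ell}e^{-n\alpha}=(-n)^{2N+\ell}e^{-n\alpha}$, the integrand of $R_N(\ell,bz,z)$ equals $(-1)^{\ell}\sum_{n\ge1}(-1)^{n-1}n^{2N+\ell}e^{-n^2zt-nbz}$. Multiplying by $e^{bz}=e^{\alpha}$ and reindexing $n\mapsto n+1$ collapses the whole problem to showing that
$$\Phi(t):=\sum_{n\ge0}x^nh(n),\qquad x=-e^{-\alpha},\quad h(n)=(n+1)^{2N+\ell}e^{-(n+1)^2zt},$$
satisfies $|\Phi(t)|\ll1$ uniformly for $t\in(0,1]$, $b\ge0$ and $z$ in the cone; the bound $e^{bz}R_N\ll1$ then follows by integrating against $(1-t)^{N-1}$ over $[0,1]$.

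Next I would record two uniform facts about the cone $\{z=x+\ri y:\ x>0,\ |y|\le x\}$. Writing $\alpha=bz=bx+\ri by$, one has $\Re(\alpha)=bx\ge0$ and $|\Im(\alpha)|=b|y|\le bx=\Re(\alpha)$, so $\alpha$ lies in the half-cone $|\Im\alpha|\le\Re\alpha$ occurring in Lemma~\ref{lem211}. In particular the lower bound established there yields $|1-x|=|1+e^{-\alpha}|\gg1$, while $|x|=e^{-\Re\alpha}\le1$. These two inequalities already control every rational factor produced by the Euler transform.

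The third step applies Lemma~\ref{lem41} to $\Phi(t)$ with $K=2N+\ell+1$, which is legitimate since $\sum_n|x^nh(n)|\le\sum_n(n+1)^{2N+\ell}e^{-(n+1)^2\Re(zt)}<\infty$. The finite part $\sum_{r=0}^{K-1}\frac{x^r\Delta^rh(0)}{(1-x)^{r+1}}$ has a fixed number of terms, each bounded by $|\Delta^rh(0)|/|1+e^{-\alpha}|^{r+1}\ll1$ uniformly in $t$, because $\Delta^rh(0)$ is a fixed linear combination of values $(j+1)^{2N+\ell}e^{-(j+1)^2zt}$ with $0\le j\le r$, all of modulus at most $(r+1)^{2N+\ell}$. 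For the remainder $\frac{x^K}{(1-x)^K}\sum_{n\ge0}x^n\Delta^Kh(n)$ the prefactor is $\ll1$ and $|x|\le1$, so it remains to prove $\sum_{n\ge0}|\Delta^Kh(n)|\ll1$ uniformly.

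This last estimate is the main obstacle, and it is exactly where the choice $K=2N+\ell+1$ matters: because $K$ strictly exceeds the degree $2N+\ell$ of the polynomial factor, $\Delta^K$ annihilates that factor and can act only on the Gaussian, so each difference extracts a power of $c:=zt$. Concretely, writing $H(w)=w^{2N+\ell}e^{-cw^2}$ so that $h(n)=H(n+1)$, the representation $\Delta^Kh(n)=\int_{[0,1]^K}H^{(K)}(n+1+s_1+\cdots+s_K)\,ds$ reduces matters, after a standard comparison of the sum with the integral, to $\int_1^\infty|H^{(K)}(u)|\,du\ll1$. In the Leibniz expansion of $H^{(K)}$ every surviving term differentiates $e^{-cu^2}$ at least once (as $K>2N+\ell$), so each term carries a factor $c^{\,j-i}\,u^{\,2N+\ell-K+2j-2i}\,e^{-cu^2}$ with $j\ge1$; after the substitution $v=u\sqrt{\Re c}$ and using $|c|\asymp\Re c$ in the cone, each such term integrates to a quantity of size $(\Re c)^{(K-2N-\ell-1)/2}$ times a convergent Gaussian integral. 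With $K=2N+\ell+1$ this power is $0$, giving the bound uniformly in $c$, hence in $t$ and $z$. Assembling the three steps yields $|\Phi(t)|\ll1$ uniformly and therefore $e^{bz}R_N(\ell,bz,z)\ll\int_0^1(1-t)^{N-1}\,dt\ll1$.
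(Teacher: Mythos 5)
Your proof is correct, and at the decisive step it takes a genuinely different route from the paper's. Both arguments rest on the same two pillars --- the Euler transform (Lemma~\ref{lem41}) and the cone bounds $|e^{-\alpha}|\le 1$, $|1+e^{-\alpha}|\gg 1$ from the proof of Lemma~\ref{lem211} --- but they are deployed in opposite order. The paper keeps $\alpha$ symbolic, Euler-transforms the series with $h(n)=e^{-n^2zt}$ as in \eqref{eq34}, and only afterwards applies $\P_{\alpha}^{2N+\ell}$ (via the product rule and Lemma~\ref{lem211}); the polynomial weight then re-enters through that differentiation, and the key technical input is the two-case Taylor estimate \eqref{eq33} for $\Delta^{K}e^{-n^2zt}$, followed by splitting the resulting sum at $n=(xt)^{-2/3}$ with the larger choice $K=4N+2\ell+2$. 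You instead differentiate term by term first (legitimate by locally uniform convergence), absorb the weight into $h(n)=(n+1)^{2N+\ell}e^{-(n+1)^2zt}$, and Euler-transform that; uniformity then falls out of the integral representation $\Delta^{K}h(n)=\int_{[0,1]^K}H^{(K)}(n+1+s_1+\cdots+s_K)\,ds$ together with the scaling $v=u\sqrt{\Re(zt)}$, which is valid because $|zt|\asymp\Re(zt)$ in the cone $|y|\le x$, and the minimal choice $K=2N+\ell+1$ (one more than the polynomial degree) makes the net power of $\Re(zt)$ exactly zero. Your route is shorter and more systematic: the smoothness-of-$H$ argument replaces the paper's ad hoc min-bound and sum-splitting, and it gets away with a smaller $K$; the paper's route has the mild advantage that its difference estimate \eqref{eq33} uses nothing beyond Taylor expansions of $e^x$ and concerns the bare Gaussian, which is what its derivative bookkeeping reuses. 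One notational quibble in your Leibniz step: a term that differentiates the Gaussian $m=K-i\ge 1$ times produces monomials $c^{p}u^{2p-m}$ with $\lceil m/2\rceil\le p\le m$, so the combined $u$-exponent is $2N+\ell-K+2p=2p-1\ge 1$ and the prefactor after scaling is $(\Re(zt))^{(K-2N-\ell-1)/2}=1$; this matches the conclusion you state, so the indexing slip is harmless.
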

\begin{proof}
First of all, from Proposition \ref{lemth} we have
\begin{align}\label{eq280}
R_{N}(\ell, bz,z)\ll\int_{0}^1\left|\P_{\alpha}^{2N+\ell}\big|_{\alpha=bz}\bigg(\sum_{n\ge 1}(-1)^{n}e^{-n^2zt-n\alpha}\bigg)\right|\,dt.
\end{align}
Let $E_u(n)=e^{-n^2u}$ and let $K\in\nb_0$ be given. Trivially, if $n\gg 1/x$ then
\begin{equation}\label{eq281}
\Delta_n^{K}E_{zt}(n)=(-1)^{K}\sum_{r=0}^{K}\binom{K}{r}(-1)^re^{-(n+r)^2zt}\ll e^{-n^2xt}.
\end{equation}
By the Taylor expansion for $e^x$, we further have for all $|zt|\le 1$,
\begin{align*}
e^{n^2zt}\Delta^{K}E_{zt}(n)&=(-1)^{K}e^{n^2zt}\sum_{r=0}^{K}\binom{K}{r}(-1)^re^{-(n+r)^2zt}\\
&\ll \left|\sum_{r=0}^{K}\binom{K}{r}(-1)^re^{-(2nr+r^2)zt}\right|\\
&=\left|\sum_{r=0}^{K}\binom{K}{r}(-1)^re^{-2nrzt}\left(\sum_{0\le \ell< K/2}\frac{(-r^2zt)^{\ell}}{\ell!}+O(|zt|^{K/2})\right)\right|.
\end{align*}
Further, for $\Re(nzt)\ge 0$ we have
\begin{align*}
e^{n^2zt}\Delta^{K}E_{zt}(n)&\ll \left|\sum_{0\le \ell<K/2}\frac{(-zt)^{\ell}}{\ell!}
\sum_{r=0}^{K}\binom{K}{r}(-1)^rr^{2\ell}e^{-2nrzt}\right|+|zt|^{K/2}\\
&=\left|\sum_{0\le \ell<K/2}\frac{(-zt)^{\ell}}{\ell!}\P_u^{2\ell}\big|_{u=2nzt}\left(1-e^{-u}\right)^K\right|+|zt|^{K/2}.
\end{align*}
Thus if $0\le n\le 1/x$, $0\le t\le 1$ and $|zt|\le 1$ then
\begin{align*}
e^{n^2zt}\Delta^{K}E_{zt}(n)&\ll \sum_{0\le \ell<K/2}|zt|^{\ell}|nzt|^{K-2\ell}+|zt|^{K/2}\\
&\ll |nxt|^{K}+(xt)^{K/2}+(xt)^{K/2}\\
&\ll |nxt|^{K}+(xt)^{K/2}.
\end{align*}
Combining the trivial estimate \eqref{eq280} and above we obtain that for all $n\in\nb_0$,
\begin{equation}\label{eq33}
\Delta^{K}E_{zt}(n)\ll \min\left(1, (xt)^{K/2}+|nxt|^{K}\right)e^{-n^2xt}.
\end{equation}
On the other hand, if $\Re(u)>0$ then Lemma \ref{lem41} implies that,
\begin{align*}
\sum_{n\ge 0}(-1)^{n}e^{-n^2u-n\alpha}=&\sum_{r=0}^{K-1}\frac{(-1)^re^{-r\alpha}\Delta^{r}E_{u}(0)}{(1+e^{-\alpha})^{r+1}}\\
&+\frac{(-1)^Ke^{-K\alpha}}{(1+e^{-\alpha})^{K}}\sum_{n\ge 0}(-1)^ne^{-n\alpha}\Delta^{K}E_{u}(n),
\end{align*}
that is
\begin{align}\label{eq34}
\sum_{n\ge 1}(-1)^{n}e^{-n^2u-n\alpha}=&-\frac{e^{-\alpha}}{1+e^{-\alpha}}+\sum_{r=1}^{K-1}\frac{(-1)^re^{-r\alpha}\Delta^{r}E_{u}(0)}{(1+e^{-\alpha})^{r+1}}\nonumber\\
&+\frac{(-1)^Ke^{-K\alpha}}{(1+e^{-\alpha})^{K}}
\sum_{n\ge 0}(-1)^ne^{-n\alpha}\Delta^{K}E_{u}(n).
\end{align}
Hence by inserting \eqref{eq34} and \eqref{eq33} to \eqref{eq280}, using Lemma \ref{lem211} and product rule for derivative, directly calculating yields
\begin{align*}
e^{bz}R_{N}(\ell, bz,z)\ll&\int_{0}^1\bigg|e^{bz}\P_{\alpha}^{2N+\ell}\big|_{\alpha=bz}\bigg(-\frac{e^{-\alpha}}{1+e^{-\alpha}}+\sum_{r=1}^{K-1}\frac{(-1)^re^{-r\alpha}\Delta^{r}E_{zt}(0)}{(1+e^{-\alpha})^{r+1}}\\
&+\frac{(-1)^Ke^{-K\alpha}}{(1+e^{-\alpha})^{K}}
\sum_{n\ge 0}(-1)^ne^{-n\alpha}\Delta^{K}E_{zt}(n)\bigg)\bigg|\,dt\\
\ll& 1+\int_{0}^1\sum_{0\le j\le 2N+\ell}\left|\sum_{n\ge 0}(-1)^nn^je^{-bnz}
\Delta^{K}E_{zt}(n)\right|\,dt\\
\ll&1+ \int_{0}^1\left|\sum_{n\ge 0}n^{2N+\ell}\min\left(1, (xt)^{K/2}+(nxt)^{K}\right)e^{-n^2xt}\right|\,dt,
\end{align*}
for all $b\ge 0$.  Splitting the inner sum of the above integral into two parts, gives
\begin{align*}
e^{bz}R_{N}(\ell, bz,z)
\ll&1+\int_{0}^1\bigg|\sum_{0\le n\le (xt)^{-2/3}}((xt)^{K/2}+(nxt)^{K})n^{2N+\ell}\\
&\qquad\qquad+\sum_{n\ge (xt)^{-2/3}}n^{2N+\ell}e^{-n^2xt}\bigg|\,dt\\
\ll& 1+\int_{0}^1\left|(xt)^{\frac{K}{3}-\frac{2}{3}(2N+\ell+1)}+1\right|\,dt\ll 1,
\end{align*}
by setting $K=4N+2\ell+2$. This completes the proof.
\end{proof}

From Lemma \ref{lem211}, Proposition \ref{lemth} and Proposition \ref{pro24} we prove the following uniform
asymptotic expansion.
\begin{theorem}\label{thmft}Let $p, \ell\in\nb_0$ be given. Also let $z=x+\ri y$ with $x,y\in\rb$, $x>0$ and $|y|\le x$.  We have
$$\sum_{n\ge 1}(-1)^{n-1}n^{\ell}e^{-n^2z-bnz}=\left((-1)^{\ell}\sum_{0\le k<p}\frac{(-z)^{k}}{k !}
\P_{\alpha}^{2k+\ell}+O(|z|^p)\right)\bigg|_{\alpha=bz}\left\{\frac{1}{1+e^{\alpha}}\right\},$$
as $z\rrw 0$, holding uniformly for all $b\ge 0$.
\end{theorem}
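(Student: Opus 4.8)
The plan is to read the statement off almost directly from Proposition \ref{lemth} and then absorb its remainder using Proposition \ref{pro24}. For $p\ge 1$ (the case $p=0$ being the trivial assertion that the alternating sum is $O(1)$), I would first apply Proposition \ref{lemth} with $N=p$ and with the free parameter $\alpha$ specialized to $\alpha=bz$, so that the sum $\sum_{n\ge 1}(-1)^{n-1}n^{\ell}e^{-n^2z-bnz}$ becomes exactly
\[
(-1)^{\ell}\sum_{0\le k<p}\frac{(-z)^k}{k!}\P_{\alpha}^{2k+\ell}\left(\frac{1}{1+e^{\alpha}}\right)\bigg|_{\alpha=bz}+z^{p}R_{p}(\ell,bz,z).
\]
The leading finite sum is already in the claimed form, so the whole theorem reduces to showing that the remainder satisfies $z^{p}R_{p}(\ell,bz,z)=O(|z|^{p})$, uniformly for all $b\ge 0$.

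Next I would control $R_{p}(\ell,bz,z)$ by quoting Proposition \ref{pro24}, which gives $e^{bz}R_{p}(\ell,bz,z)\ll 1$ uniformly in $b\ge 0$ under exactly the hypotheses imposed here, namely $z=x+\ri y$ with $x>0$ and $|y|\le x$. Since $\Re(bz)=bx\ge 0$ for $b\ge 0$, we have $|e^{-bz}|=e^{-bx}\le 1$, and therefore $R_{p}(\ell,bz,z)\ll|e^{-bz}|\le 1$. Multiplying by $z^{p}$ yields $z^{p}R_{p}(\ell,bz,z)=O(|z|^{p})$ with an implied constant independent of $b$, which is precisely the symbolic operator-error term $O(|z|^p)\big|_{\alpha=bz}\{1/(1+e^{\alpha})\}$ on the right-hand side: whatever one's reading of that notation, its magnitude is $O(|z|^p)$ because the evaluation $\{1/(1+e^{\alpha})\}|_{\alpha=bz}$ is itself bounded, using $|1+e^{-\alpha}|\gg 1$ in the sector $|\Im\alpha|\le\Re\alpha$ together with $|e^{-\alpha}|\le 1$, as established inside the proof of Lemma \ref{lem211}. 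Combining the two displays completes the argument.

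The genuinely hard analytic content has already been isolated in Proposition \ref{pro24}, so the remaining work is essentially assembly, and I do not expect a real obstacle here; the one point demanding care is that every estimate be tracked \emph{uniformly} in $b$, so that the final $O$-constant does not degrade as $b\rrw+\infty$. In particular I would stress that the cone condition $|y|\le x$ is exactly what keeps $\alpha=bz$ inside the sector $|\Im\alpha|\le\Re\alpha$ where Lemma \ref{lem211} supplies the boundedness of $\{1/(1+e^{\alpha})\}$ and its derivatives, and that the uniformity for all $b\ge 0$ — rather than only the range $b=o(z^{-1/2})$ available from the Euler--Maclaurin or Mellin approaches — is the gain produced upstream by the Euler transform (Lemma \ref{lem41}) together with the alternating sign of the series, which converts the growth in $b$ into the harmless bounded factor $e^{-bx}$.
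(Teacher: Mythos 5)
Your overall route is exactly the paper's: the paper obtains Theorem \ref{thmft} precisely by applying Proposition \ref{lemth} with $N=p$ and $\alpha=bz$, and then invoking Proposition \ref{pro24} together with the sector estimates underlying Lemma \ref{lem211}; your assembly of these ingredients is the intended proof, and the hard analysis is indeed all in Proposition \ref{pro24}.

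However, your final paragraph contains a logical slip that you should repair. The error term in Theorem \ref{thmft} is not an absolute $O(|z|^p)$: the $O(|z|^p)$ sits \emph{inside} the operator brackets, so it multiplies $\left\{1/(1+e^{\alpha})\right\}\big|_{\alpha=bz}$, and the assertion is that the remainder is $\ll |z|^p\left|1+e^{bz}\right|^{-1}\asymp|z|^p e^{-bx}$ uniformly in $b\ge 0$. This multiplicative reading is the one the paper uses downstream — it is what produces the bound $\P_z^{\ell}T_{a,b}(z)\ll(1+b^{\ell})e^{-b\beta_f/(2\sqrt{X})}$ quoted in the proof of Proposition \ref{pro31}, and the factor $e^{-b\beta_f/(2\sqrt{X})}$ in the error terms of Theorem \ref{mth}; with only an absolute $O(|z|^p)$ those arguments would fail for large $b$ (e.g.\ $b\asymp X^{3/4}$, where $e^{-b\beta_f/(2\sqrt{X})}$ is exponentially small). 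So the sentence ``whatever one's reading of that notation, its magnitude is $O(|z|^p)$'' has the logic backwards: establishing a remainder of size $O(|z|^p)$ does not verify a claimed bound of size $O(|z|^pe^{-bx})$. Fortunately your own intermediate estimate already gives the strong form and you should simply not discard it: Proposition \ref{pro24} gives $R_p(\ell,bz,z)\ll|e^{-bz}|=e^{-bx}$, and since $|1+e^{bz}|\le 1+e^{bx}\le 2e^{bx}$ we have $e^{-bx}\le 2\left|1+e^{bz}\right|^{-1}$, whence
\begin{equation*}
z^pR_p(\ell,bz,z)\ll |z|^p\left|\frac{1}{1+e^{\alpha}}\right|\bigg|_{\alpha=bz},
\end{equation*}
which is exactly the claimed error term. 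The same caveat applies to your aside on $p=0$: under the correct reading that case asserts the sum is $O(e^{-bx})$, which is not trivial but follows from the case $p=1$ together with Lemma \ref{lem211}.
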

To prove our main theorem, we shall prove the following proposition.
\begin{proposition}\label{pro25} Let $\mu\in\rb$,  $J\in\nb_0$ and $p\ge 1$ be given and $z\in\rb_+$. We have
$$\P_z^JT_{a, b+\mu}(z)= \left(\sum_{0\le k<p}z^{k}P_{k,J}(\mu, a, b, \P_{\alpha})+O\left((1+|b|^{J})z^p\right)\right)
\bigg|_{\alpha=bz}\left\{\frac{1}{1+e^{\alpha}}\right\}, $$
as $z\rrw 0^+$, holding uniformly for all $b\ge 0$. Here
$$P_{k,J}(\mu,a, b, \P_{\alpha})=\frac{1}{k!}\sum_{\substack{0\le r\le J\\ 0\le s\le k+J-r}}
\binom{J}{r}\binom{J-r+k}{s}\mu^{k+J-s-r}(-a)^sb^r\P_{\alpha}^{s+k+J}.$$

\end{proposition}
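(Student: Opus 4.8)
The plan is to reduce everything to the single false theta function $G(z,\alpha):=\sum_{n\ge1}(-1)^{n-1}e^{-an^2z-n\alpha}$, whose uniform asymptotics at $\alpha=bz\ge 0$ are already supplied by Theorem \ref{thmft} (after rescaling so that the coefficient of $n^2$ becomes $a$), namely $G(z,\alpha)=\big(\sum_{0\le k<p}\tfrac{(-az)^k}{k!}\P_\alpha^{2k}+O(|az|^p)\big)\{1/(1+e^\alpha)\}$ for $\alpha\ge 0$. The key structural observation is that $G$ obeys the heat-type relation $\P_z G=-a\,\P_\alpha^2 G$, and since $T_{a,c}(z)=G(z,cz)$ one gets $\P_z T_{a,c}(z)=\big[(-a\P_\alpha^2+c\P_\alpha)G\big](z,cz)$. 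Using $\P_z\P_\alpha^m G=-a\,\P_\alpha^{m+2}G$ one may iterate this to obtain the clean identity $\P_z^J T_{a,c}(z)=\big[\mathcal D^J G\big](z,cz)$ with $\mathcal D=-a\P_\alpha^2+c\P_\alpha$ and $c=b+\mu$. Since the two summands of $\mathcal D$ commute, $\mathcal D^J=\sum_{s=0}^J\binom Js(-a)^s(b+\mu)^{J-s}\P_\alpha^{s+J}$, which reduces matters to $\P_\alpha^{s+J}G$ evaluated at $\alpha=(b+\mu)z$.

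The main computation then proceeds in three bookkeeping steps. First, I plan to Taylor-expand the smooth map $\alpha\mapsto\P_\alpha^{s+J}G(z,\alpha)$ about $\alpha=bz$ with integral remainder, shifting the evaluation point from $(b+\mu)z$ to $bz$; this produces the factors $\tfrac{(\mu z)^t}{t!}$ and raises the order to $\P_\alpha^{s+J+t}G(z,bz)$. Second, at $\alpha=bz\ge 0$ the rescaled Theorem \ref{thmft} applies and replaces $\P_\alpha^{s+J+t}G(z,bz)$ by $\big(\sum_k\tfrac{(-az)^k}{k!}\P_\alpha^{2k+s+J+t}+O(|az|^p)\big)\{1/(1+e^\alpha)\}$. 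Third, writing $(b+\mu)^{J-s}=\sum_r\binom{J-s}{r}b^r\mu^{J-s-r}$ and collecting all contributions by the total power $z^{k+t}$, I reindex via $k'=k+t$, $s'=s+k$, $r'=r$: the derivative order becomes $s'+J+k'$, the power of $\mu$ becomes $k'+J-s'-r'$, the power of $-a$ becomes $s'$, and the combinatorial weight is $\tfrac{1}{k'!}\sum_{k}\binom{k'}{k}\binom{J}{s'-k}\binom{J-s'+k}{r'}$. Applying the trinomial revision $\binom{J}{s'-k}\binom{J-s'+k}{r'}=\binom{J}{r'}\binom{J-r'}{s'-k}$ and then Vandermonde's identity $\sum_k\binom{k'}{k}\binom{J-r'}{s'-k}=\binom{J-r'+k'}{s'}$ collapses this weight to $\tfrac{1}{k'!}\binom{J}{r'}\binom{J-r'+k'}{s'}$, which is exactly the coefficient appearing in $P_{k',J}(\mu,a,b,\P_\alpha)$; the constraints $0\le r'\le J$ and $s'\le k'+J-r'$ follow from $r\le J-s$.

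The delicate point, and the step I expect to be the main obstacle, is making the error genuinely uniform in $b\ge 0$ and of the stated size $O((1+|b|^J)z^p)$. The subtlety is that the Taylor-shift remainder from the first step is evaluated at intermediate points $\alpha=\xi z$ with $\xi$ between $b$ and $b+\mu$, which can be negative when $\mu<0$ and $b$ is small, so Theorem \ref{thmft} is unavailable there. Instead I plan to bound such remainders directly from the series: a term $\sum_{n\ge1}n^{m}e^{-an^2z-n\xi z}\le\sum_{n\ge1}n^{m}e^{-an^2z+|\mu|nz}\ll z^{-(m+1)/2}$ by completing the square, the shift contributing only a bounded factor $e^{|\mu|^2z/4a}=O(1)$ as $z\rrw 0^+$. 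Choosing the truncation orders of the $t$- and $k$-sums large enough that every retained summand has $z$-exponent below $p$ while the tails carry a power $z^{\ge p}$, and using that the derivatives of $1/(1+e^\alpha)$ are bounded for $\alpha\ge 0$ (as in the estimate behind Lemma \ref{lem211}), each discarded contribution is $O(z^p)$ up to the polynomial factors $b^r$ with $0\le r\le J$; these factors are precisely what produce the $(1+|b|^J)$. Assembling the retained terms yields $\big(\sum_{0\le k<p}z^{k}P_{k,J}(\mu,a,b,\P_\alpha)+O((1+|b|^J)z^p)\big)\big|_{\alpha=bz}\{1/(1+e^\alpha)\}$, completing the proof.
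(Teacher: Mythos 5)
Your algebraic reduction is correct, and it is a genuinely different route from the paper's. The paper expands $e^{-n\mu z}$ into an \emph{infinite} Taylor series, applies the Leibniz rule in $z$, invokes Theorem \ref{thmft} termwise, and collects coefficients with Chu--Vandermonde; you instead iterate the heat-type relation $\P_z G=-a\P_{\alpha}^2G$ through the chain rule to get the finite identity $\P_z^JT_{a,b+\mu}(z)=\big[(-a\P_{\alpha}^2+(b+\mu)\P_{\alpha})^J G\big](z,(b+\mu)z)$, and then shift the evaluation point from $(b+\mu)z$ to $bz$ by a finite Taylor expansion. I checked your reindexing: trinomial revision plus Vandermonde does collapse the weight to $\frac{1}{k'!}\binom{J}{r'}\binom{J-r'+k'}{s'}$ with exactly the index ranges of $P_{k',J}$, and your all-finite decomposition even sidesteps a point the paper leaves implicit, namely uniformity in $\ell$ of the implied constants of Theorem \ref{thmft} over an infinite $\ell$-sum.

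However, the step you yourself single out as delicate contains a genuine gap. The error $O\big((1+|b|^{J})z^p\big)$ in Proposition \ref{pro25} is not an absolute error: in the paper's notation the whole bracket acts on $\{1/(1+e^{\alpha})\}\big|_{\alpha=bz}$, so the error term means $O\big((1+|b|^{J})z^p\big)\cdot\frac{1}{1+e^{bz}}$, and $\tfrac12 e^{-bz}\le \frac{1}{1+e^{bz}}\le e^{-bz}$. This exponential decay in $bz$ is the entire point of the uniformity claim: in the proof of Proposition \ref{pro31} the result is quoted as $\P_z^{\ell}T_{a,b}(z)\ll(1+b^{\ell})e^{-b\beta_f/(2\sqrt{X})}$ with $b$ as large as $o(X^{3/4})$, i.e.\ $bz\rrw\infty$. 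Your bound for the Taylor-shift remainder, $\sum_{n\ge1}n^{m}e^{-an^2z-n\xi z}\le\sum_{n\ge1}n^{m}e^{-an^2z+|\mu|nz}\ll z^{-(m+1)/2}$, discards all $b$-dependence, so what you actually establish is only an absolute error $O((1+|b|^{J})z^p)$; once $bz\gg p\log(1/z)$ this is exponentially larger than the main terms and than $\P_z^JT_{a,b+\mu}(z)$ itself, so the statement you obtain is strictly weaker than the proposition and cannot feed Proposition \ref{pro31} or Theorem \ref{mth}. The same loss occurs at the end, where you invoke mere boundedness of the derivatives of $1/(1+e^{\alpha})$ for $\alpha\ge 0$ instead of the bound $\P_{\alpha}^{m}\{1/(1+e^{\alpha})\}\ll e^{-\alpha}$ that Lemma \ref{lem211} actually supplies. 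Fortunately the repair is one line: for $\xi$ between $b$ and $b+\mu$ and $n\ge 1$ write $e^{-n\xi z}\le e^{-nbz}e^{n|\mu|z}\le e^{-bz}e^{n|\mu|z}$, whence $\sum_{n\ge1}n^{m}e^{-an^2z-n\xi z}\le e^{-bz}\sum_{n\ge1}n^{m}e^{-an^2z+n|\mu|z}\ll e^{-bz}z^{-(m+1)/2}$; with this, and with Lemma \ref{lem211} in its $e^{-\alpha}$ form for the retained terms of total $z$-order at least $p$, every discarded contribution is $\ll(1+|b|^{J})z^{p}e^{-bz}$, which matches the proposition as stated and as used.
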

\begin{proof}
By Taylor expansion for $e^{x}$, it is easy to see that
\begin{align*}
T_{a, b+\mu}(z)=\sum_{n\ge 1}(-1)^{n-1}e^{-an^2z-bnz-n\mu z}=\sum_{\ell\ge 0}\frac{(-\mu z)^{\ell}}{\ell !}\sum_{n\ge 1}(-1)^{n-1}n^{\ell}e^{-(an^2+bn)z}.
\end{align*}
Take the $J$-th order derivative of both sides above, and using the product rule we obtain
\begin{align}\label{equ2J}
\P_z^JT_{a, b+\mu}(z)&=\P_z^J\left(\sum_{\ell\ge 0}\frac{(-\mu)^{\ell}}{\ell !}z^{\ell}
\sum_{n\ge 1}(-1)^{n-1}n^{\ell}e^{-(an^2+bn)z}\right)\nonumber\\
&=\sum_{\ell\ge 0}\frac{(-\mu)^{\ell}}{\ell !}\sum_{v=0}^{ J}\binom{J}{v}(-1)^{v}(-\ell)_{v}z^{\ell-v}\P_z^{J-v}
\sum_{n\ge 1}(-1)^{n-1}n^{\ell}e^{-(an^2+bn)z}.
\end{align}
Here $(x)_{v}=\prod_{0\le j<v}(x+j)$ is the Pochhammer symbol.  On the other hand, for each nonnegative integer $N$,
\begin{align}\label{equ2J1}
\P_z^{N}\sum_{n\ge 1}(-1)^{n-1}&n^{\ell}e^{-(an^2+bn)z}\nonumber\\
&=\sum_{n\ge 1}(-1)^{n-1+N}n^{\ell}(an^2+bn)^{N}e^{-(an^2+bn)z}\nonumber\\
&=(-1)^N\sum_{r=0}^N\binom{N}{r}a^{N-r}b^r\sum_{n\ge 1}(-1)^{n-1}n^{\ell+2N-r}e^{-(n^2+a^{-1}bn)az}.
\end{align}
Inserting \eqref{equ2J1} into \eqref{equ2J} implies that
\begin{align*}
\P_z^JT_{b+\mu}(z)
=&\sum_{\ell\ge 0}(-\mu)^{\ell}\sum_{v=0}^{ J}\sum_{r=0}^{J-v}\frac{z^{\ell-v}}{\ell !}
\frac{(-1)^J(-\ell)_{v}a^{J-v-r}b^rJ!}{(J-v-r)!v!r!}\\
&\qquad\qquad\qquad\times\sum_{n\ge 1}(-1)^{n-1}n^{\ell+2J-2v-r}e^{-(n^2+a^{-1}bn)az}\\
=&(-1)^J\sum_{v=0}^{J}\sum_{r=0}^{J-v}\frac{J!a^{J-v-r}b^r}{(J-v-r)!v!r!}
\sum_{\ell\ge 0}\frac{\mu^{\ell+v}(-z)^{\ell}}{\ell !}\\
&\qquad\qquad\qquad\times\sum_{n\ge 1}(-1)^{n-1}n^{\ell+2J-v-r}e^{-(n^2+a^{-1}bn)az}.
\end{align*}
Using Theorem \ref{thmft} yields for any given integer $p\ge 0$
\begin{align*}
\P_z^JT_{a, b+\mu}(z)=&(-1)^J\sum_{v=0}^{J}\sum_{r=0}^{J-v}\frac{J!a^{J-v-r}b^r}{(J-v-r)!v!r!}
\sum_{\ell\ge 0}\frac{\mu^{\ell+v}(-z)^{\ell}}{\ell !}(-1)^{\ell+2J-v-r}\\
&\times\left(\sum_{0\le n<p}\frac{(-az)^n}{n!}\P_{\alpha}^{2n+\ell+2J-v-r}+O(|az|^p)\right)
\bigg|_{\alpha=a^{-1}baz}\left\{\frac{1}{1+e^{\alpha}}\right\}\\
=& \left(\sum_{0\le k<p}z^{k}P_{k,J}(\mu, a, b, \P_{\alpha})+O\left((1+|b|^{J})z^p\right)\right)
\bigg|_{\alpha=bz}\left\{\frac{1}{1+e^{\alpha}}\right\}.
\end{align*}
Here,
\begin{align*}
P_{k,J}(\mu, a, b, \P_{\alpha})&=\sum_{\substack{n,\ell\ge 0\\ n+\ell=k}}(-1)^{J+n+\ell}
\sum_{v=0}^{J}\sum_{r=0}^{J-v}\frac{J!a^{J-v-r+n}b^r\mu^{\ell+v}(-1)^{\ell-v-r}}{(J-v-r)!v!r!\ell ! n!}\P_{\alpha}^{2n+\ell+2J-v-r}\\
&=\sum_{s, r\ge 0}\bigg(\sum_{\substack{v, n,\ell\ge 0\\ n+\ell=k\\ r+v\le J,~ J-v-r+n=s}}
\frac{(-1)^{n+J-v-r}J!}{(J-v-r)!v!r!\ell ! n!}\bigg)\mu^{k+J-s-r}a^sb^r\P_{\alpha}^{s+k+J}\\
&=\sum_{s, r\ge 0}\frac{1}{k!}\sum_{\substack{v, J-r-v, n\ge 0\\ J-r-v+n=s}}
\binom{J}{r}\binom{J-r}{J-r-v}\binom{k}{n}\mu^{k+J-s-r}(-a)^sb^r\P_{\alpha}^{s+k+J}\\
&=\frac{1}{k!}\sum_{\substack{0\le r\le J\\ 0\le s\le k+J-r}}\binom{J}{r}\binom{J-r+k}{s}\mu^{k+J-s-r}(-a)^sb^r\P_{\alpha}^{s+k+J},
\end{align*}
by using the well known Chu--Vandermonde identity
$$\sum_{\substack{m,n\ge 0\\ m+n=s}}\binom{M}{m}\binom{N}{n}=\binom{M+N}{s},$$
for $s\in\nb_0$ and $M,N\in\cb$, which completes the proof.
\end{proof}

\section{Asymptotic expansion for the sum of \texorpdfstring{$f(X)$}{Lg} over values of certain quadratic polynomials}\label{sec3}

In this section, we use the fundamental results of the previous section to prove the main results of this paper.
We first prove Theorem \ref{mth}.
\subsection{The proof of Theorem \ref{mth}}\label{sec31}

We first prove a uniform asymptotic expansion for $S_f(a,b;X)$ in terms of the false theta function $T_{a,b}(z)$ defined
in Subsection \ref{sec22}. To prove it we need the following.

\begin{lemma}\label{lemm301}Let $a\in\rb_+, p\in\nb_0$ be given, $b\in(\rb_+\cup \{0\})$ and $\varepsilon\in\rb_+$. As $\varepsilon\rrw 0^+$,
$$\sum_{n\ge 1}(an^2+bn)^pe^{-\varepsilon (an^2+bn)}\ll
\left(\varepsilon^{-p-1/2}+b^p\right)e^{-\varepsilon b}.$$
\end{lemma}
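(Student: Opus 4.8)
The plan is to split the sum over $n$ at the natural crossover scale where the two regimes $an^2 \approx bn$ meet, estimating each piece separately and then combining. The target bound $\left(\epsilon^{-p-1/2}+b^p\right)e^{-\epsilon b}$ suggests exactly two competing contributions: a $b$-independent piece scaling like $\epsilon^{-p-1/2}$ (coming from the quadratic $an^2$ term, as in a Gaussian integral of a degree-$2p$ polynomial) and a piece like $b^p e^{-\epsilon b}$ (coming from the linear $bn$ term near $n=1$). So I would first pull out the factor $e^{-\epsilon bn}\le e^{-\epsilon b}$ for $n\ge 1$, reducing the problem to bounding $\sum_{n\ge 1}(an^2+bn)^p e^{-\epsilon a n^2}$ up to the factor $e^{-\epsilon b}$.

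First I would expand $(an^2+bn)^p = \sum_{j=0}^p \binom{p}{j}(an^2)^j (bn)^{p-j}$ by the binomial theorem, so it suffices to bound each term $a^j b^{p-j}\sum_{n\ge 1} n^{2j}\cdot n^{p-j} e^{-\epsilon a n^2}=a^j b^{p-j}\sum_{n\ge 1} n^{p+j} e^{-\epsilon a n^2}$. The key elementary fact is the standard comparison of a sum to an integral: for a fixed nonnegative integer exponent $M$, one has $\sum_{n\ge 1} n^M e^{-c n^2}\ll c^{-(M+1)/2}$ as $c\to 0^+$, obtained by comparing with $\int_0^\infty t^M e^{-ct^2}\,dt = \tfrac12 c^{-(M+1)/2}\Gamma\!\left(\tfrac{M+1}{2}\right)$ (the summand $n^M e^{-cn^2}$ is eventually decreasing, so the sum and integral differ by a bounded multiple). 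Applying this with $M=p+j$ and $c=\epsilon a$ gives $\sum_{n\ge1} n^{p+j}e^{-\epsilon a n^2}\ll (\epsilon a)^{-(p+j+1)/2}$, so the $j$-th term contributes $\ll a^{j}b^{p-j}(\epsilon a)^{-(p+j+1)/2}$. Here $a$ is a fixed positive constant, so the $a$-powers are absorbed into the implied constant, leaving $\ll b^{p-j}\epsilon^{-(p+j+1)/2}$.

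It then remains to check that each of the $p+1$ terms $b^{p-j}\epsilon^{-(p+j+1)/2}$, for $0\le j\le p$, is dominated by $\epsilon^{-p-1/2}+b^p$. The two extreme cases are transparent: $j=p$ gives exactly $\epsilon^{-p-1/2}$, and $j=0$ gives $b^p \epsilon^{-1/2}$. The intermediate terms interpolate between these, and I would control them by the elementary weighted AM–GM / Young-type inequality $b^{p-j}\epsilon^{-(p+j+1)/2}\ll b^p + \epsilon^{-p-1/2}$, which holds because $b^{p-j}\epsilon^{-(p+j+1)/2}$ is a geometric-mean-type combination of the endpoints $b^p\epsilon^{-1/2}$ and $\epsilon^{-p-1/2}$ (writing $\theta = j/p$, the middle term equals $(b^p\epsilon^{-1/2})^{1-\theta}(\epsilon^{-p-1/2})^{\theta}$ up to the harmless factor $\epsilon^{-1/2}$ in the first endpoint, which is itself $O(\epsilon^{-p-1/2})$ as $\epsilon\to 0^+$). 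Summing the finitely many terms and reinstating the factor $e^{-\epsilon b}$ yields the claimed bound.

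The main obstacle, such as it is, is purely bookkeeping: one must make sure the factor $e^{-\epsilon bn}$ is handled cleanly (splitting it as $e^{-\epsilon b}\cdot e^{-\epsilon b(n-1)}$ and discarding the second factor by positivity) so that the Gaussian estimate above, which only sees the $an^2$ exponent, is legitimate, and one must verify the endpoint $\epsilon^{-1/2}$ versus $\epsilon^{-p-1/2}$ comparison is in the correct direction as $\epsilon\to 0^+$. None of this is genuinely hard, but the statement is uniform in $b\ge 0$, so I would be careful that no step secretly assumes $b$ bounded or $b\to\infty$; every estimate above is valid for all $b\in\rb_{\ge 0}$ and all sufficiently small $\epsilon$, with the implied constant depending only on $a$ and $p$.
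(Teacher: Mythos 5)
There is a genuine gap, and it is fatal to the approach as written. In your very first step you discard the factor $e^{-\epsilon b(n-1)}$ entirely, keeping only the single factor $e^{-\epsilon b}$, which reduces the lemma to the claim
$$\sum_{n\ge 1}(an^2+bn)^p e^{-\epsilon a n^2}\ll \epsilon^{-p-1/2}+b^p .$$
This reduced claim is \emph{false} once $b\gg\epsilon^{-1/2}$, so no amount of subsequent bookkeeping can close the argument. Indeed, the $j=0$ term of your binomial expansion is $b^p\sum_{n\ge 1}n^pe^{-\epsilon a n^2}\asymp b^p\epsilon^{-(p+1)/2}$ (note this is $b^p\epsilon^{-(p+1)/2}$, not the $b^p\epsilon^{-1/2}$ you wrote --- an arithmetic slip, since the Gaussian sum with $M=p$ has order $\epsilon^{-(p+1)/2}$). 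Taking $p\ge 1$ and, say, $b=\epsilon^{-1}$, this term is $\epsilon^{-(3p+1)/2}$, which dwarfs both $\epsilon^{-p-1/2}$ and $b^p=\epsilon^{-p}$; the true sum in the lemma is only $\asymp b^p$ there, precisely because the factor $e^{-\epsilon bn}=e^{-n}$ you threw away suppresses the range $n\asymp\epsilon^{-1/2}$ where $n^{p+j}e^{-\epsilon an^2}$ peaks. For the same reason your concluding Young/AM--GM inequality $b^{p-j}\epsilon^{-(p+j+1)/2}\ll b^p+\epsilon^{-p-1/2}$ fails for every $j<p$ (matching exponents shows it can hold uniformly in $b$ only when $j=p$; e.g.\ for $j=0$, $p=1$, $b=\epsilon^{-1}$ it reads $\epsilon^{-2}\ll\epsilon^{-3/2}$). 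The uniformity in large $b$ is not a side issue: the paper applies this lemma with $\epsilon\asymp X^{-1/2}$ and $b$ as large as $o(X^{3/4})=o(\epsilon^{-3/2})$, so the regime $b\gg\epsilon^{-1/2}$ is exactly the one needed.

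Your argument is sound, and essentially coincides with the paper's, in the complementary regime $0\le b\le\epsilon^{-1/2}$: there each term satisfies $b^{p-j}\epsilon^{-(p+j+1)/2}\le\epsilon^{-p-1/2}$ and $e^{-\epsilon b}\gg 1$, so no interpolation is needed. What is missing is a second case. The paper splits at $b=\epsilon^{-1/2}$ and, for $b\ge\epsilon^{-1/2}$, \emph{keeps} the linear decay: after shifting the summation index to extract $e^{-\epsilon b}$, the dangerous piece is bounded by $b^p\sum_{n\ge 1}n^pe^{-\epsilon bn}\ll b^p\max\bigl(1,(\epsilon b)^{-p-1}\bigr)$ (a geometric-type estimate, obtained by differentiating $\sum_{n\ge1}e^{-\alpha n}=(e^{\alpha}-1)^{-1}$ $p$ times at $\alpha=\epsilon b$), and then $b^p(\epsilon b)^{-p-1}=\epsilon^{-p-1}/b\le\epsilon^{-p-1/2}$ precisely because $b\ge\epsilon^{-1/2}$. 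Repairing your proof requires adding this case distinction and the linear-decay estimate; the Gaussian estimate alone cannot give the lemma.
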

\begin{proof}
For $0\le b\le \varepsilon^{-1/2}$,
\begin{align}\label{eq002}
\sum_{n\ge 1}(an^2+bn)^pe^{-\varepsilon (an^2+bn)}&\ll \sum_{1\le n\le \varepsilon^{-1/2}}(n^{2p}+b^pn^p)+\sum_{n\ge \varepsilon^{-1/2}}n^{2p}e^{-an^2\varepsilon}\nonumber\\
&\ll \varepsilon^{-p-1/2}+b^{p}\varepsilon^{-p/2-1/2}+\sum_{n\in \zb}n^{2p}e^{-an^2\varepsilon}\nonumber\\
&\ll \varepsilon^{-p-1/2}e^{-\varepsilon b},
\end{align}
by using the fact that for each $p\in\nb_0$,
\begin{equation}\label{eq000}
\sum_{n\in \zb}n^{2p}e^{-an^2\varepsilon}\ll \varepsilon^{-p-1/2},
\end{equation}
as $\varepsilon\rrw 0^+$. For $b\ge \varepsilon^{-1/2}$,
\begin{align*}
\sum_{n\ge 1}(an^2+bn)^pe^{-\varepsilon (an^2+bn)}&\ll \sum_{n\ge 1}(n^{2p}+b^pn^p)e^{-\varepsilon (an^2+bn)}\\
&\ll e^{-\varepsilon b}\left(\sum_{n\ge 1}n^{2p}e^{-an^2\varepsilon}+b^p\sum_{n\ge 1}n^pe^{-\varepsilon (an^2+bn)}\right)\\
&\ll e^{-\varepsilon b}\left(\sum_{n\in \zb}n^{2p}e^{-an^2\varepsilon}+b^p+b^p\sum_{n\ge 1}n^pe^{-\varepsilon bn}\right).
\end{align*}
Therefore, by noting that
\begin{align*}
\sum_{n\ge 1}n^pe^{-\varepsilon bn}&=(-1)^p\frac{\,d}{\,d \alpha^p}\bigg|_{\alpha=b \varepsilon}\sum_{n\ge 1}e^{-\alpha n}\\
&=(-1)^p\frac{\,d}{\,d \alpha^p}\bigg|_{\alpha=b \varepsilon}\frac{1}{e^{\alpha}-1}\\
&\ll \max\left(1,\frac{1}{(b\varepsilon)^{p+1}}\right),
\end{align*}
and \eqref{eq000} we have
\begin{align}\label{eq001}
\sum_{n\ge 1}(an^2+bn)^pe^{-\varepsilon (an^2+bn)}&\ll e^{-\varepsilon b}\left(\varepsilon^{-p-1/2}+b^p+\varepsilon^{-p-1}/b\right)\nonumber\\
&\ll \left(\varepsilon^{-p-1/2}+b^p\right)e^{-\varepsilon b}.
\end{align}
Combining \eqref{eq002} and \eqref{eq001} finishes the proof.
\end{proof}

We next prove the following proposition.
\begin{proposition}\label{pro31}Let $a\in\rb_+$, $p\in \nb$ be given and let $X, b\in(\rb_+\cup \{0\})$. We have
$$
\frac{S_{f}(a,b;X)}{f(X)}=\sum_{0\le \ell<p}X^{-3\ell/4}\Lambda_{\ell}(f,X)\P_z^{\ell}\Big|_{z=\frac{\beta_f}{2\sqrt{X}}}T_{a, b}(z)
+O\left(\frac{1+b^{p}}{X^{3p/4}}e^{-\frac{b\beta_f}{2\sqrt{X}}}\right),
$$
as $ X\rrw +\infty$, for all $b=o(X^{3/4})$.
\end{proposition}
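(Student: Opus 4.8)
The plan is to write each summand $f(X-(an^2+bn))$ of $S_f(a,b;X)$ as a shift of $f(X)$ and feed it through the shift expansion Proposition \ref{lem22}, so that the alternating sum over $n$ reassembles the derivatives $\P_z^j T_{a,b}(z)$ of the false theta function. Abbreviate $g(n)=an^2+bn$ and $\epsilon=\beta_f/(2\sqrt X)$, so the claimed main term is $\sum_{\ell<p}X^{-3\ell/4}\Lambda_\ell(f,X)\,\P_z^\ell|_{z=\epsilon}T_{a,b}(z)$. The first step is to truncate the range at a threshold $M=M(X,b)$ that is large enough for the tail to be negligible yet small enough for Proposition \ref{lem22} to apply. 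The constraints $M=o(X^{3/4})$ (so $g(n)\le M$ forces $g(n)=o(X^{3/4})$, legitimizing \ref{lem22}) together with $\epsilon(M-b)/\log X\to\infty$ (so the tail beats every power of $X$ against the factor $e^{-\epsilon b}$) are simultaneously met by $M=b+\sqrt X(\log X)^2$, which I fix once and for all; note $M<X$ eventually. Split $S_f=S_1+S_2$, with $S_1$ over $g(n)\le M$ and $S_2$ over $M<g(n)\le X$.

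For $S_1$ I apply Proposition \ref{lem22} termwise with $r=-g(n)$, but to a \emph{higher} order $p'=3p+1$ rather than to order $p$; the remainder $O(|g(n)/X^{3/4}|^{p'})e^{-\epsilon g(n)}$ is uniform over $0\le g(n)\le M$ precisely because $M=o(X^{3/4})$. Pulling the operator $\sum_{j<p'}X^{-3j/4}\Lambda_j(f,X)\P_y^j$ outside the finite alternating sum and using $\sum_{g(n)\le M}(-1)^{n-1}\P_y^j e^{-y g(n)}=\P_y^j(T_{a,b}(y)-T^{\mathrm{tail}}(y))$, where $T^{\mathrm{tail}}(y)=\sum_{g(n)>M}(-1)^{n-1}e^{-y g(n)}$, gives
$$\frac{S_1}{f(X)}=\sum_{j<p'}X^{-3j/4}\Lambda_j\,\P_z^j\big|_{z=\epsilon}T_{a,b}\;-\;\sum_{j<p'}X^{-3j/4}\Lambda_j\,\P_z^j\big|_{z=\epsilon}T^{\mathrm{tail}}\;+\;\mathrm{Err}_1,$$
with $\mathrm{Err}_1\ll X^{-3p'/4}\sum_{n\ge1}g(n)^{p'}e^{-\epsilon g(n)}$. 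The first sum splits into the $j<p$ part (the desired main term) and an ``extra'' part $p\le j<p'$.

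The crux is that all three correction pieces are $O\!\big(\tfrac{1+b^p}{X^{3p/4}}e^{-\epsilon b}\big)$. For $\mathrm{Err}_1$ I invoke Lemma \ref{lemm301} with exponent $p'$, obtaining $\mathrm{Err}_1\ll(X^{-p'/4+1/4}+b^{p'}X^{-3p'/4})e^{-\epsilon b}$; the choice $p'=3p+1$ makes the first factor exactly $X^{-3p/4}$, while $b=o(X^{3/4})$ renders the second $o(b^pX^{-3p/4})$. This is exactly why order $p$ fails and order $3p+1$ is needed: a direct order-$p$ expansion loses by a factor $X^{1/4+p/2}$, because Lemma \ref{lemm301} only controls $\sum g(n)^p e^{-\epsilon g(n)}$ crudely by $\epsilon^{-p-1/2}\asymp X^{p/2+1/4}$. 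For the extra main terms I cannot afford that crude size; instead I use the genuine decay $|\P_z^j T_{a,b}(\epsilon)|\ll(1+b^j)e^{-\epsilon b}$, read off from Proposition \ref{pro25} (with $\mu=0$) together with Lemma \ref{lem211} (which bounds $\P_\alpha^m\{1/(1+e^\alpha)\}\ll e^{-\alpha}$ at $\alpha=b\epsilon\ge0$). Then $X^{-3j/4}(1+b^j)e^{-\epsilon b}\ll\tfrac{1+b^p}{X^{3p/4}}e^{-\epsilon b}$ for every $j\ge p$, again since $b=o(X^{3/4})$.

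Finally I dispatch the two genuine tails. For $T^{\mathrm{tail}}$, the splitting $e^{-\epsilon g(n)}\le e^{-\epsilon M/2}e^{-\epsilon g(n)/2}$ with Lemma \ref{lemm301} at $\epsilon/2$ gives $|\P_z^j T^{\mathrm{tail}}(\epsilon)|\ll e^{-\epsilon M/2}(X^{j/2+1/4}+b^j)e^{-\epsilon b/2}$; since $e^{-\epsilon M/2}=e^{-\epsilon b/2}e^{-(\beta_f/4)(\log X)^2}$, the super-polynomial factor $e^{-(\beta_f/4)(\log X)^2}$ annihilates every power of $X$, leaving $o(X^{-3p/4}e^{-\epsilon b})$. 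For $S_2$ I use the uniform bound $|f(Y)|\ll e^{\beta_f\sqrt Y}(1+Y)^{|\alpha_f|}$ for $Y\ge0$ (from \eqref{fdef} plus $f=O(1)$ on bounded sets) and concavity of the square root, $\beta_f\sqrt{X-g(n)}\le\beta_f\sqrt X-\epsilon g(n)$, to get $|f(X-g(n))|/f(X)\ll X^{C}e^{-\epsilon g(n)}$ for a fixed $C$; summing over $g(n)>M$ via $\sum_n e^{-\epsilon g(n)/2}\ll\epsilon^{-1/2}e^{-\epsilon b/2}$ again contributes the super-small factor $e^{-(\beta_f/4)(\log X)^2}$. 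Collecting these yields $S_f/f(X)=S_1/f(X)+O(S_2/f(X))$ with every correction of the advertised size. The main obstacle throughout is precisely this error accounting — raising the truncation order to $3p+1$ while feeding the \emph{true} exponential decay of the false theta derivatives back into the leftover terms, and balancing the cutoff $M$ against $b$ so that both tails stay below $e^{-\epsilon b}X^{-3p/4}$.
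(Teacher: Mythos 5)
Your proof is correct and follows essentially the same route as the paper's: truncate the sum at a cutoff that is $o(X^{3/4})$, apply Proposition \ref{lem22} termwise at an inflated order, complete the partial alternating sums to $\P_z^{j}T_{a,b}(z)$, and control all leftovers with Lemma \ref{lemm301} together with the decay bound $\P_z^{j}T_{a,b}(z)\ll(1+b^{j})e^{-b\beta_f/2\sqrt{X}}$ from Proposition \ref{pro25} and Lemma \ref{lem211}. The only differences are cosmetic: your cutoff $b+\sqrt{X}(\log X)^2$ versus the paper's $\epsilon X^{3/4}$, your overshoot order $3p+1$ versus the paper's $8p$, and your direct bound on the far tail versus the paper's ratio estimate.
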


\begin{proof}Let $\varepsilon>0$ be sufficiently small be given and let $0\le b\le \varepsilon X^{3/4}/4$. We first split the sum $S_f(a,b;X)$
into two parts as follows:
\begin{align*}
\frac{S_f(a,b;X)}{f(X)}&=\left(\sum_{\substack{n\ge 1\\ an^2+bn\le \varepsilon X^{3/4}}}
+\sum_{\substack{n\ge 1\\ \varepsilon X^{3/4}< an^2+bn\le X}}\right)(-1)^{n-1}\frac{f(X-(an^2+bn))}{f(X)}\\
&=:M+E.
\end{align*}
For the sum $E$, we estimate that
\begin{align*}
E&\ll \sum_{\substack{n\ge 1\\ \varepsilon X^{3/4}< an^2+bn\le X}}\frac{f(X-(an^2+bn))}{f(X)} \\
&\ll \sqrt{X} \frac{f(X-\varepsilon X^{3/4})}{f(X-a-b)}\nonumber\\
 &\ll \sqrt{X}e^{\beta_f\sqrt{X-\varepsilon X^{3/4}}-\beta_f\sqrt{X}},
\end{align*}
that is
\begin{align}\label{equerr}
E\ll  \sqrt{X}e^{-\frac{\beta_f}{2\sqrt{X}}\varepsilon X^{3/4}}
\ll X^{-A}e^{-\frac{\beta_fb}{2\sqrt{X}}},
\end{align}
holds for any given $A>0$, by using that $0\le b\le \varepsilon X^{3/4}/4$. For the sum $M$, applying Proposition \ref{lem22}
implies that,
\begin{align*}
M=&\sum_{\substack{n\ge 1\\ an^2+bn\le \varepsilon X^{3/4}}}(-1)^{n-1}\bigg(\sum_{0\le j<p}X^{-3j/4}\Lambda_{j}(f,X)\P_y^j\\
&\qquad\qquad\qquad+O\left(\left|\frac{an^2+bn}{X^{3/4}}\right|^p\right)\bigg)\bigg|_{y=\frac{\beta_f}{2\sqrt{X}}} e^{-y (an^2+bn)}\\
=& \sum_{0\le j<p}\frac{\Lambda_{j}(f,X)}{X^{3j/4}}\P_y^{j}\big|_{y=\frac{\beta_f}{2\sqrt{X}}}
\sum_{\substack{n\ge 1\\ an^2+bn\le \varepsilon X^{3/4}}}(-1)^{n-1}e^{-y (an^2+bn)}\\
&\qquad\qquad\qquad+O\left(\sum_{\substack{n\ge 1\\ an^2+bn\le \varepsilon X^{3/4}}}\frac{(an^2+bn)^{p}}{X^{3p/4}}e^{-\frac{\beta_f}{2\sqrt{X}} (an^2+bn)}\right).
\end{align*}
Since for each $j\in\nb$,
\begin{align*}
\P_y^{j}\big|_{y=\frac{\beta_f}{2\sqrt{X}}}\Bigg(&\sum_{\substack{n\ge 1\\ an^2+bn\ge \varepsilon X^{3/4}}}(-1)^{n-1}e^{-y (n^2+2bn)}\Bigg)\\
&\ll \sum_{\substack{n\ge 1\\ an^2+bn\ge  \varepsilon X^{3/4}}}(an^2+bn)^{j}e^{-\frac{\beta_f}{2\sqrt{X}}(an^2+bn)}\\
&\ll e^{-\frac{3\varepsilon\beta_f}{8}X^{1/4}}\sum_{n\ge 1}(an^2+bn)^{j}e^{-\frac{\beta_f}{8\sqrt{X}}(an^2+bn)}\\
&\ll e^{-\frac{3\varepsilon\beta_f}{8}X^{1/4}}\left(X^{j/2+1/4}+b^j\right)e^{-\frac{\beta_fb}{8\sqrt{X}}},
\end{align*}
by Lemma \ref{lemm301} and using that $0\le b\le \varepsilon X^{3/4}/4$, we have for any given $A>0$,
\begin{equation}\label{equerr1}
\P_y^{j}\big|_{y=\frac{\beta_f}{2\sqrt{X}}}\Bigg(\sum_{\substack{n\ge 1\\ an^2+bn\ge \varepsilon X^{3/4}}}(-1)^{n-1}e^{-y (n^2+2bn)}\Bigg)
\ll X^{-A}e^{-\frac{\beta_fb}{2\sqrt{X}}}.
\end{equation}
By Lemma \ref{lemm301},
\begin{equation}\label{equerr2}
\sum_{\substack{n\ge 1\\ an^2+bn\le \varepsilon X^{3/4}}}\frac{(an^2+bn)^p}{X^{3p/4}}e^{-\frac{\beta_f}{2\sqrt{X}} (an^2+bn)}
\ll \left(\frac{X^{p/2+1/4}+b^p}{X^{3p/4}}\right)e^{-\frac{\beta_fb}{2\sqrt{X}}}.
\end{equation}
Combining \eqref{equerr}--\eqref{equerr2}, changing $p$ to $8p$ and noting that Proposition \ref{pro25} gives
$$\P_z^{\ell}\big|_{z=\frac{\beta_f}{2\sqrt{X}}}T_{a, b}(z)\ll (1+b^{\ell})e^{-\frac{b\beta_f}{2\sqrt{X}}},$$
for each $\ell\in\nb$, we have
\begin{align*}
\frac{S_{f}(a,b;X)}{f(X)}&=\sum_{0\le \ell<8p}X^{-3\ell/4}\Lambda_{\ell}(f,X)\P_z^{\ell}\Big|_{z=\frac{\beta_f}{2\sqrt{X}}}T_{a, b}(z)
+O\left(\frac{X^{4p+1/4}+b^{8p}}{X^{6p}}e^{-\frac{b\beta_f}{2\sqrt{X}}}\right)\\
&=\sum_{0\le \ell<p}X^{-3\ell/4}\Lambda_{\ell}(f,X)\P_z^{\ell}\Big|_{z=\frac{\beta_f}{2\sqrt{X}}}T_{a, b}(z)
+O\left(\frac{1+b^{p}}{X^{3p/4}}e^{-\frac{b\beta_f}{2\sqrt{X}}}\right),
\end{align*}
which completes the proof.
\end{proof}

\begin{proof}[Proof of Theorem \ref{mth}]From Proposition \ref{pro31}, Proposition \ref{pro25} and Proposition \ref{lem22} we find that
\begin{align*}
\frac{S_{f}(a,b+\mu;X)}{f(X)}=&\sum_{0\le j<2p}X^{-3j/4}\Lambda_{j}(f,X)\P_z^{j}\Big|_{z=\frac{\beta_f}{2\sqrt{X}}}T_{a, b+\mu}(z)\\
&\qquad\qquad\qquad+O\left(\frac{1+|b+\mu|^{2p}}{X^{3p/2}}e^{-\frac{(b+\mu)\beta_f}{2\sqrt{X}}}\right)\\
=&\sum_{\substack{0\le j<2p\\ 0\le k,n<4p}}\frac{\lambda_{n,j}(f)}{X^{(n+3j)/4}}\left(\frac{\beta_f}{ 2\sqrt{X}}\right)^{k}\\
&\qquad \times P_{k,j}(\mu, a, b, \P_{\alpha})\big|_{\alpha=\frac{b\beta_f}{2\sqrt{X}}}\left\{\frac{1}{1+e^{\alpha}}\right\}+E_{f,p}(a,b+\mu;X)\\
=&:I_{f,p}(a,b+\mu;X)+E_{f,p}(a,b+\mu;X).
\end{align*}
Note the following bounds
$$\Lambda_j(f, X)\ll 1$$
and
$$\P_z^{j}\Big|_{z=\frac{\beta_f}{2\sqrt{X}}}T_{a, b+\mu}(z)\ll 1+|b|^J,$$
for all $j\in\nb_0$. Which follow from Proposition \ref{lem22} and  Proposition \ref{pro25}. Then using the estimates in Proposition \ref{lem22} and  Proposition \ref{pro25} yields
\begin{align*}
e^{\frac{(b+\mu)\beta_f}{2\sqrt{X}}}E_{f,p}(a,b+\mu;X)\ll & \frac{1+|b|^{2p}}{X^{3p/2}}+\sum_{0\le j<2p}\left(\frac{1+|b|^j}{X^{p+3j/4}}+\frac{1+|b|^j}{X^{2p+3j/4}}\right)
\end{align*}
Since $b=o(X^{3/4})$, we have
 \begin{align*}
E_{f,p}(a,b+\mu;X)\ll  \left(\frac{1+|X^{-1/2}b|}{X}\right)^pe^{-\frac{(b+\mu)\beta_f}{2\sqrt{X}}}.
\end{align*}
Moreover,
\begin{align*}
I_{f,p}(a,&b+\mu;X)\\
&=\sum_{d\ge 0}\frac{1}{X^{d/4}}\sum_{\substack{0\le j<2p\\ 0\le k,n<4p\\ n+2k+3j=d}}
\lambda_{n,j}(f)\left(\beta_f/2\right)^{k}P_{k,j}(\mu, a, b, \P_{\alpha})\big|_{\alpha=\frac{b\beta_f}{2\sqrt{X}}}
\left\{\frac{1}{1+e^{\alpha}}\right\}\\
&=\sum_{d\ge 0}\frac{1}{X^{d/4}}\sum_{\substack{0\le j<2p\\ 0\le k,n<4p\\ n+2k+3j=d}}\sum_{\substack{0\le r\le j\\ 0\le s\le k+j-r}}\frac{\lambda_{n,j}(f)\left(\beta_f/2\right)^{k}\binom{j}{r}\binom{j-r+k}{s}}{k!}\\
&\qquad\qquad\qquad\qquad\qquad\qquad \times
\mu^{k+j-s-r}(-a)^sb^r\P_{\alpha}^{s+k+j}\big|_{\alpha=\frac{b\beta_f}{2\sqrt{X}}}\left\{\frac{1}{1+e^{\alpha}}\right\}\\
&=\sum_{d\ge 0}\frac{1}{X^{d/4}}\sum_{r,\ell,s\ge 0}C_{r,\ell,s}^*(d; f)a^sb^r\mu^{\ell}
\P_{\alpha}^{r+\ell+2s}\big|_{\alpha=\frac{b\beta_f}{2\sqrt{X}}}\left\{\frac{1}{1+e^{\alpha}}\right\},
\end{align*}
with
\begin{align*}
C_{r,\ell,s}^*(d; f)&=\sum_{0\le k,n<4p}\sum_{\substack{2p>j\ge r\\ k+j=s+r+\ell,~ n+2k+3j=d}}(-1)^{s}
\frac{\lambda_{n,j}(f)\left(\beta_f/2\right)^{k}\binom{j}{r}\binom{j-r+k}{s}}{k!}.
\end{align*}
From the above, and noting that $\lambda_{n,j}(f)=0$ if $n\not\equiv j\bmod 2$ (by Proposition \ref{lem22}) it is not difficult
to see that $C_{r,\ell,s}^*(2d+1;f)\equiv 0$. Furthermore, if $0\le d\le p$ then it is clear that
\begin{align*}
C_{r,\ell,s}^*(2d; f)&=(-1)^{s}\sum_{\substack{n,k\ge 0, j\ge r\\ k+j=s+r+\ell,~ n+2k+3j=2d}}\frac{\lambda_{n,j}(f)
\left(\beta_f/2\right)^{k}\binom{j}{r}\binom{j-r+k}{s}}{k!}\\
&=(-1)^{s}\sum_{\substack{n,k, j\ge 0\\ k+j=s+\ell,~ n+2k+3j+3r=2d}}\frac{\lambda_{n,j+r}(f)\left(\beta_f/2\right)^{k}
\binom{j+r}{r}\binom{j+k}{s}}{k!}\\
&=:C_{r,\ell,s}(d; f).
\end{align*}
We further have
\begin{equation}\label{eqccc}
(-1)^{s}C_{r,\ell,s}(d; f)
=\binom{s+\ell}{s}\sum_{\substack{n+j+2s+2\ell+3r=2d\\ 0\le j\le s+\ell, n\ge 0}}\frac{\left(\beta_f/2\right)^{s+\ell-j}}{(s+\ell-j) !}
\binom{j+r}{r}\lambda_{n,j+r}(f),
\end{equation}
with $\lambda_{n,j}(f)$ be given in Proposition \ref{lem22}. It is also clear that
$$
C_{r,\ell,s}^*(2d; f)\ll \sum_{\substack{n+j+2s+2\ell+3r=2d\\ 0\le j\le s+\ell, n\ge 0}}1
\ll \begin{cases} 1\quad & if~2s+2\ell+3r\le 2d,\\
0 &if~2s+2\ell+3r> 2d,
\end{cases}
$$
and hence
\begin{align*}
I_{f,p}(a,b+\mu;X)=\Bigg(\sum_{0\le d<p}X^{-d/2}&\cL_{f,d}(\mu,a,b,\P_{\alpha})
\\
&+O\left(\frac{1+|b|^{2p/3}}{X^{p/2}}\right)\Bigg)\bigg|_{\alpha=\frac{b\beta_f}{2\sqrt{X}}}\left\{\frac{1}{1+e^{\alpha}}\right\},
\end{align*}
with
$$\cL_{f,d}(\mu,a,b,\P_{\alpha})
=\sum_{\substack{r,\ell,s\ge 0\\ 2s+2\ell+3r\le 2d}}C_{r,\ell,s}(d; f)a^sb^r\mu^{\ell}\P_{\alpha}^{r+\ell+2s}.$$
Now combining the estimate for $E_{f,p}(a,b+\mu;X)$ and then changing $p$ to $3p$ completes the proof of Theorem \ref{mth}.
\end{proof}
\subsection{The proof of Theorem \ref{thm1}}\label{sec32}

In this subsection we prove Theorem \ref{thm1}. We first prove the following proposition:
\begin{proposition}\label{pro32}Let $X, b\in\rb_+$ with $b\le X$. If $X/3<b\le X$ then
$$S_{f}(a,b; X)=f(X-a-b)+O(|f(X-4a-2b)|).$$
If $X-b\rrw+\infty$ then
$$\frac{S_{f}(a,b; X)}{f(X-a-b)}=1+O\left(b^{-1}Xe^{-\frac{b\beta_f}{2\sqrt{X}}}\right).$$
\end{proposition}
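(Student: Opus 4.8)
The plan is to isolate the leading ($n=1$) term $f(X-a-b)$ of the alternating sum \eqref{eq1} in both regimes and to show that everything else is controlled by the second term $f(X-4a-2b)$.

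For the first assertion the argument is purely combinatorial. When $b>X/3$ one has $bn>X$ for every $n\ge 3$, so the constraint $an^2+bn\le X$ in \eqref{eq1} is satisfied only by $n\in\{1,2\}$. Adopting the convention that $f$ vanishes on the negative axis, this gives $S_f(a,b;X)=f(X-a-b)-f(X-4a-2b)$, where the second term is genuinely present when $4a+2b\le X$ and equals $0$ otherwise; in either case $S_f(a,b;X)=f(X-a-b)+O(|f(X-4a-2b)|)$, which is the first formula.

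For the second assertion I would write $R:=S_f(a,b;X)-f(X-a-b)=\sum_{n\ge 2}(-1)^{n-1}f(X-(an^2+bn))$ and bound $|R|$. Reindexing by $k=n-1$ and putting $Y:=X-a-b$ turns the remainder into $\sum_{k\ge 1}(-1)^{k}f(Y-(ak^2+(2a+b)k))$, whose first term is exactly $f(X-4a-2b)$. The engine is the pointwise ratio bound coming from \eqref{fdef}: writing $s=\sqrt Y$ and $t=\sqrt{Y-m}$ for a shift $m=ak^2+(2a+b)k$, one has $f(Y-m)/f(Y)\ll e^{-\beta_f(s-t)}(s/t)^{2\alpha_f}$, and the concavity estimate $s-t=(s^2-t^2)/(s+t)\ge m/(2\sqrt Y)$ converts the exponential into $e^{-\beta_f m/(2\sqrt Y)}$. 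For the summands whose argument is at least $Y/2$ the prefactor $(s/t)^{2\alpha_f}$ is $O(1)$, and summing the resulting geometric-type series via $\sum_{k\ge 1}e^{-\epsilon(ak^2+(2a+b)k)}\le (e^{\epsilon(2a+b)}-1)^{-1}\ll (1+(\epsilon b)^{-1})e^{-\epsilon b}$ with $\epsilon=\beta_f/(2\sqrt Y)$, together with $\epsilon\ge\beta_f/(2\sqrt X)$ and $Y\le X$, already gives $\ll(1+\sqrt X/b)\,e^{-\beta_f b/(2\sqrt X)}\ll b^{-1}X\,e^{-\beta_f b/(2\sqrt X)}$.

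The delicate point, and the step I expect to be the main obstacle, is the contribution of the near-extremal summands with argument $Y-m<Y/2$, where $(s/t)^{2\alpha_f}$ may blow up (when $\alpha_f>0$ and $t$ is small). Here one must keep the full exponent rather than the crude lower bound $m/(2\sqrt Y)$. I would split into the two natural ranges of $b$. When $b\le\sqrt Y$ the target error $b^{-1}X e^{-\beta_f b/(2\sqrt X)}$ is polynomially large (of size $\gg\sqrt X$), while each near-extremal term is smaller than $f(Y)$ by the genuine super-polynomial factor $e^{-(1-2^{-1/2})\beta_f\sqrt Y}$ with $Y\to\infty$, so these terms are absorbed trivially. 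When $b>\sqrt Y$ the shifts are spaced by $m_{k+1}-m_k\ge 3a+b>\sqrt Y$, so there are $O(1)$ near-extremal terms and the whole remainder is dominated by its first term $f(X-4a-2b)$; for the single ratio $f(X-4a-2b)/f(X-a-b)$ one uses $s-t\ge b/(2\sqrt X)$ to produce the factor $e^{-\beta_f b/(2\sqrt X)}$, and the residual decay $e^{-\beta_f(s-t-b/(2\sqrt X))}$, which is of order $e^{-c\sqrt Y}$ precisely when $t$ is small, swallows the polynomial prefactor and leaves the admissible factor $b^{-1}X$. Reconciling these regimes — controlling the polynomial prefactor against the available exponential room uniformly in $b$ — is exactly the technical heart; once done, combining the bulk and near-extremal estimates yields $|R|/f(X-a-b)\ll b^{-1}X\,e^{-\beta_f b/(2\sqrt X)}$, as claimed.
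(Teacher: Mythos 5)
Your first assertion, your geometric-series treatment of the bulk terms (arguments $\ge Y/2$), and your absorption argument for $b\le\sqrt Y$ are all sound (the last one needs only the trivial remark that there are $O(\sqrt Y)$ near-extremal terms, which still gives a total of $o(1)$ against a target of size $\gg\sqrt X$). The genuine gap is the counting claim in the complementary case: it is \emph{not} true that for $b>\sqrt Y$ there are $O(1)$ near-extremal summands. The near-extremal shifts $m_k=ak^2+(2a+b)k$ lie in an interval of length $Y/2$, and whenever the remainder is nonempty at all one has $4a+2b\le X$, hence $Y=X-a-b\ge X/2$; so spacing $\ge 3a+b$ only gives a count of order $Y/b\asymp X/b$, which is unbounded throughout the range $\sqrt Y<b=o(X)$ (for instance $b=\sqrt X\log X$ produces $\asymp \sqrt X/\log X$ near-extremal terms). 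Consequently the step ``the whole remainder is dominated by its first term $f(X-4a-2b)$'' is unjustified in exactly that intermediate range, and the final bound does not follow as written.

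The repair is to move the threshold from $b\sim\sqrt Y$ to $b\sim X/3$: when $3a+b>Y/2$ one has $m_2-m_1=5a+b>Y/2$, so $m_2>Y$ and there is at most \emph{one} surviving near-extremal term, namely $f(X-4a-2b)$; there your single-ratio argument works, since the residual exponent $s-t-b/(2\sqrt X)$ is $\ge c\sqrt X$ uniformly for $(X-7a)/3<b\le(X-4a)/2$, which swallows every polynomial prefactor (and arguments $X-4a-2b=O(1)$ should be handled by $f=O(1)$ rather than by the ratio bound). In the leftover range $\sqrt Y<b\le X/3$ you must argue as in your $b\le\sqrt Y$ case, not by first-term domination: the count is $\ll X/b$, each near-extremal term is $\ll f(Y/2)$, and $b\le X/3$ forces $Y\ge\tfrac23X-a$, whence $(1-2^{-1/2})\beta_f\sqrt Y\ge 0.23\,\beta_f\sqrt X>\beta_f\sqrt X/6\ge \beta_f b/(2\sqrt X)$, again with exponential room to spare. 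For comparison, the paper's own proof avoids all of this case analysis: it bounds \emph{every} discarded term by $f(X-4a-2b)$, using that $f$ is essentially increasing by \eqref{fdef}, multiplies by the trivial count $O(X/b)$, and concludes via $\sqrt{X-a-b}-\sqrt{X-4a-2b}\ge b/(2\sqrt X)$; this is much shorter, though it silently absorbs the same polynomial prefactor that you correctly single out as the delicate point.
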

\begin{proof} From the definition
\begin{align*}
S_{f}(a,b; X)=\sum_{\substack{n\ge 1\\ an^2+bn\le X}}(-1)^{n-1}f\left(X-(an^2+bn)\right)
\end{align*}
we have
\begin{align*}
|S_{f}(a,b; X)-f(X-a-b)|\le |f(X-4a-2b)|+\sum_{\substack{n\ge 3\\ an^2+bn\le X}}|f(X-(an^2+bn))|.
\end{align*}
If $X<3b$ then $X-(an^2+bn)<0$ when $n\ge 3$, and this yields
\begin{align*}
S_{f}(a,b; X)=f(X-a-b)+O(|f(X-4a-2b)|).
\end{align*}
If $X-b\rrw \infty$ then
\begin{align*}
\frac{S_f(a,b;X)}{f(X-(a+b))}-1
&=\sum_{\substack{n\ge 2\\ an^2+bn\le X}}(-1)^{n-1}\frac{f\left(X-(an^2+bn)\right)}{f(X-(a+b))}\\
&\ll\sum_{\substack{n\ge 2\\ an^2+bn\le X}}\frac{f\left(X-(4a+2b)\right)}{f(X-(a+b))}.
\end{align*}
Thus for $2b+4a\le X$,
\begin{align*}
\frac{S_f(a,b;X)}{f(X-(a+b))}-1\ll \frac{X}{b}e^{\beta_f\left(\sqrt{X-4a-2b}-\sqrt{X-a-b}\right)}
\ll b^{-1}Xe^{-\frac{b\beta_f}{2\sqrt{X}}},
\end{align*}
and for $2b+4a>X$ and,
\begin{align*}
\frac{S_f(a,b;X)}{f(X-(a+b))}-1=0\ll b^{-1}Xe^{-\frac{b\beta_f}{2\sqrt{X}}},
\end{align*}
which completes the proof of the Proposition \ref{pro32}.
\end{proof}
\begin{proof}[Proof of Theorem \ref{thm1}] We first let $L(x)$ be a real function satisfying $\lim\limits_{x\rrw+\infty}L(x)=+\infty$. By Proposition \ref{pro32} and Corollary \ref{cor21}, if $X, b\in\rb_+$ such
that $L(X)X^{1/2}\log X\le b\le X-L(X)$, then we have for each $p\in\nb$,
\begin{align*}
\frac{S_f(a,b+\mu;X)}{f(X-b)}=&\frac{S_f(a,b+\mu,X)}{f(X-b-a-\mu)}\frac{f(X-b-a-\mu)}{f(X-b)}\\
=&\left(1+O\left(\frac{X}{b+\mu}e^{-\frac{(b+\mu)\beta_f}{2\sqrt{X}}}\right)\right)\Bigg(O\left(\frac{1}{(X-b)^{p}}\right)\\
&\qquad+\sum_{0\le d<2p}\frac{\sum_{\substack{k,j,n\ge 0\\ n+2k+3j=2d}}\frac{(-1)^{j+k}\lambda_{n,j}(f)\left({\beta_f}/{2}\right)^{k}
(a+\mu)^{k+j}}{k !}}{(X-b)^{d/2}}\Bigg).
\end{align*}
Notice that for any given $A>0$,
$$\frac{X}{b+\mu}e^{-\frac{(b+\mu)\beta_f}{2\sqrt{X}}}\ll b^{-1}X^{1-\frac{\beta_f}{2}L(X)}\ll \frac{1}{(X-b)^{A}},$$
then we have
\begin{align*}
\frac{S_f(a,b+\mu;X)}{f(X-b)}&\sim \sum_{d\ge 0}\frac{1}{(X-b)^{\frac{d}{2}}}\sum_{\substack{k,j,n\ge 0\\ n+2k+3j=2d}}\frac{(-1)^{j+k}
\lambda_{n,j}(f)\left({\beta_f}/{2}\right)^{k}}{k !}(a+\mu)^{k+j}\\
&\sim \sum_{d\ge 0}\frac{1}{(X-b)^{\frac{d}{2}}}\sum_{\substack{\ell, s, k,j,n\ge 0\\ n+2k+3j=2d, \ell+s=k+j}}\frac{(-1)^{j+k}\binom{k+j}{s}
\lambda_{n,j}(f)\left(\frac{\beta_f}{2}\right)^{k}}{k !}\mu^{\ell}a^{s},
\end{align*}
This means that we have an asymptotic expansion of form
\begin{align*}
\frac{S_f(a,b+\mu;X)}{f(X-b)}
\sim \sum_{d\ge 0}\frac{1}{(X-b)^{d/2}}\sum_{\substack{\ell,s\ge 0\\ \ell+s\le d}}C_{\ell,s}(d;f)\mu^{\ell}a^{s},
\end{align*}
as $(X-b)\rrw+\infty$, where
\begin{equation}\label{eqccc1}
(-1)^{\ell+s}C_{\ell,s}(d;f)=\binom{\ell+s}{s}\sum_{\substack{n+j+2(\ell+s)=2d\\ 0\le j\le \ell+s, n\ge 0}}
\frac{\left({\beta_f}/{2}\right)^{\ell+s-j}}{(\ell+s-j) !}\lambda_{n,j}(f).
\end{equation}
This completes the proof of Theorem \ref{thm1}.
\end{proof}
\section{Proofs of Theorem \ref{propm} and \ref{propm'}}\label{sec4}

In this section we prove Theorem \ref{propm} and Theorem \ref{propm'}. We shall use Theorem \ref{mth} and Theorem \ref{thm1}
to prove Theorem \ref{propm} in Subsection \ref{sec41}. We prove Theorem \ref{propm'} in Subsection \ref{sec42}.

\subsection{The proof of Theorem \ref{propm}}\label{sec41}
\subsubsection{Some of the first exact values of coefficients $C_{r,\ell,s}(d;f)$ and  $C_{\ell,s}(d;f)$}\label{sec411}
We begin with the following lemma.
\begin{lemma}\label{lem32}Let $\ell, J\in\nb_0$. We have:
\begin{align*}
\Delta_{u}^J\big|_{u=0}u^{\ell}=\begin{cases}~\qquad 0&\quad 0\le \ell<J,\\
\qquad J! &\qquad \ell=J,\\
 J(J+1)!/2 &\qquad \ell=J+1.
\end{cases}
\end{align*}
\end{lemma}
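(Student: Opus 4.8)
The plan is to compute $\Delta_u^J\big|_{u=0}u^\ell$ for all $\ell$ simultaneously by packaging these numbers into an exponential generating function in an auxiliary variable $t$, which makes the three stated values drop out as coefficients. First I would observe that, since $\Delta_u F(u)=F(u+1)-F(u)$, the operator $\Delta_u$ acts on the exponential $e^{ut}$ as multiplication by $e^t-1$, because $\Delta_u e^{ut}=e^{(u+1)t}-e^{ut}=(e^t-1)e^{ut}$. Iterating $J$ times gives $\Delta_u^J e^{ut}=(e^t-1)^J e^{ut}$, and evaluating at $u=0$ yields $\Delta_u^J\big|_{u=0}e^{ut}=(e^t-1)^J$. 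Since $e^{ut}=\sum_{\ell\ge 0}u^\ell t^\ell/\ell!$ and $\Delta_u^J$ is a fixed finite linear combination of shift operators, applying it term by term and comparing coefficients of $t^\ell$ gives the identity $\sum_{\ell\ge 0}\big(\Delta_u^J\big|_{u=0}u^\ell\big)\frac{t^\ell}{\ell!}=(e^t-1)^J$.

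With this identity in hand, the three cases are read off from the Taylor expansion of $(e^t-1)^J$ at $t=0$. Writing $e^t-1=t\big(1+\tfrac{t}{2}+O(t^2)\big)$, I would factor $(e^t-1)^J=t^J\big(1+\tfrac{J}{2}t+O(t^2)\big)$. For $0\le \ell<J$ the coefficient of $t^\ell$ vanishes, giving $\Delta_u^J\big|_{u=0}u^\ell=0$; the coefficient of $t^J$ is $1$, so $\Delta_u^J\big|_{u=0}u^J=J!$; and the coefficient of $t^{J+1}$ is $J/2$, whence $\Delta_u^J\big|_{u=0}u^{J+1}=\tfrac{J}{2}(J+1)!=\tfrac{J(J+1)!}{2}$, matching the claim.

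Alternatively, and perhaps more in keeping with the elementary tone of the surrounding lemmas, one can argue directly: $\Delta_u$ lowers the degree of a polynomial by exactly one and multiplies its leading coefficient by the degree, so $\Delta_u^J u^\ell$ vanishes identically for $\ell<J$ and equals the constant $J!$ for $\ell=J$, which settles the first two cases without any evaluation at $u=0$. The only case requiring genuine work is $\ell=J+1$, where $\Delta_u^J u^{J+1}$ is linear in $u$ and one must extract its constant term; here the generating-function computation above (or, equivalently, the closed form $\Delta_u^J\big|_{u=0}u^\ell=\sum_{k=0}^J(-1)^{J-k}\binom{J}{k}k^\ell=J!\,S(\ell,J)$ with $S(\ell,J)$ the Stirling number of the second kind and $S(J+1,J)=\binom{J+1}{2}$) supplies the subleading coefficient $J/2$. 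I expect this extraction of the subleading term in the $\ell=J+1$ case to be the only mildly delicate point; the other two cases are immediate from degree considerations.
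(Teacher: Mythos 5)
Your proof is correct and is essentially the paper's own argument: the paper rewrites $\Delta_u^J\big|_{u=0}u^{\ell}$ as the binomial sum $(-1)^J\sum_{j=0}^J(-1)^j\binom{J}{j}j^{\ell}=(-1)^{\ell+J}\P_x^{\ell}\big|_{x=0}(1-e^{-x})^J$ and reads off the Taylor expansion $(1-e^{-x})^J=x^J-(J/2)x^{J+1}+\dots$, which is exactly your generating-function identity $\sum_{\ell\ge 0}\big(\Delta_u^J\big|_{u=0}u^{\ell}\big)t^{\ell}/\ell!=(e^t-1)^J$ up to the substitution $t=-x$. The extraction of the leading and subleading coefficients of $(e^t-1)^J$ is the same in both, so your write-up matches the paper's proof in substance.
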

\begin{proof}
For $J,\ell\in\nb_0$ note that
\begin{align*}
\Delta_{u}^J\big|_{u=0}u^{\ell}&=(-1)^J\sum_{j=0}^J(-1)^j\binom{J}{j}j^{\ell}=(-1)^{\ell}\P_x^{\ell}\Big|_{x=0}\left((-1)^J\sum_{j=0}^J(-1)^j\binom{J}{j}e^{-jx}\right)\\
&=(-1)^{\ell+J}\P_x^{\ell}\Big|_{x=0}\left(1-e^{-x}\right)^{J}=(-1)^{\ell+J}\P_x^{\ell}\Big|_{x=0}\left(x^{J}
-({J}/{2})x^{J+1}+\dots\right).
\end{align*}
This finished the proof of this lemma.
\end{proof}
We now give some of the first exact values of coefficients $C_{r,\ell,s}(d;f)$ and  $C_{\ell,s}(d;f)$ in Theorem \ref{mth} and
Theorem \ref{thm1}, respectively, which will be used in the proof of Theorem \ref{propm}. Using
Proposition \ref{lem22} in Theorem \ref{mth}, we have for each $J\in\nb_0$,
\begin{equation}\label{eqc1}
C_{0,J,0}(J;f)=\frac{\beta_f^J\lambda_{0,0}(f)}{2^JJ!}=\frac{\beta_f^J}{2^JJ!},
\end{equation}
\begin{align}\label{eqc2}
C_{0,J,0}(J+1;f)&=\frac{\beta_f^J\lambda_{2,0}(f)}{2!J!}+\frac{\beta_f^{J-1}\lambda_{1,1}(f)}{2^{J-1}\Gamma(J)}
+\frac{\beta_f^{J-2}\lambda_{0,2}(f)}{2^{J-2}\Gamma(J-1)}\nonumber\\
&=-\frac{\beta_f^{J-1}\alpha_f}{2^{J-1}\Gamma(J)}-\frac{\beta_f^{J-2}\beta_f}{2^{J+1}\Gamma(J-1)},
\end{align}
\begin{equation}\label{eqc3}
-C_{0,J,1}(J+1;f)=\binom{J+1}{J}\frac{\beta_f^{J+1}\lambda_{0,0}(f)}{2^{J+1}(J+1)!}=\frac{\beta_f^{J+1}}{2^{J+1}J!},
\end{equation}
\begin{align}\label{eqc4}
C_{1,J,0}(J+2;f)&=\frac{\beta_f^{J-1}\lambda_{0,2}(f)}{2^{J-1}\Gamma(J)}\binom{2}{1}
+\frac{\beta_f^J\lambda_{1,1}(f)}{2^JJ!}=-\frac{\beta_f^{J}}{2^{J+1}\Gamma(J)}-\frac{\alpha_f\beta_f^J}{2^JJ!}
\end{align}
and
\begin{equation}\label{eqc5}
C_{2,J,0}(J+3;f)=\frac{\beta_f^J\lambda_{0,2}(f)}{2^JJ!}=-\frac{\beta_f^{J+1}}{2^{J+3}J!}.
\end{equation}
Here $\Gamma(x)$ is the Euler gamma function defined by
$$\frac{1}{\Gamma(x)}=x\prod_{k\ge 1}\left(1-\frac{x}{k}\right)\left(1+\frac{1}{k}\right)^{x},$$
for all $x\in\cb$. From Theorem \ref{thm1} and Proposition \ref{lem22}, we have for each $J\in\nb_0$,
\begin{equation}\label{eqc11}
(-1)^JC_{J,0}(J;f)=\frac{\beta_f^J\lambda_{0,0}(f)}{2^JJ!}=\frac{\beta_f^J}{2^JJ!},
\end{equation}
\begin{align}\label{eqc21}
(-1)^JC_{J,0}(J+1;f)&=\frac{\beta_f^J\lambda_{2,0}(f)}{2!J!}+\frac{\beta_f^{J-1}\lambda_{1,1}(f)}{2^{J-1}\Gamma(J)}
+\frac{\beta_f^{J-2}\lambda_{0,2}(f)}{2^{J-2}\Gamma(J-1)}\nonumber\\
&=-\frac{\beta_f^{J-1}\alpha_f}{2^{J-1}\Gamma(J)}-\frac{\beta_f^{J-1}}{2^{J+1}\Gamma(J-1)}
\end{align}
and
\begin{equation}\label{eqc31}
(-1)^{J+1}C_{J,1}(J+1;f)=\binom{J+1}{J}\frac{\beta_f^{J+1}\lambda_{0,0}(f)}{2^{J+1}(J+1)!}=\frac{\beta_f^{J+1}}{2^{J+1}J!}.
\end{equation}

\subsubsection{The cases of $0\le b\le \sqrt{X}(\log X)^2$}\label{sec412}

We first compute some special values of $\Delta_{u}^J\big|_{u=0}\cL_{f,d}(\mu u, a, b,\P_{\alpha}), J\in\nb_0$ in Theorem \ref{mth} by using
Lemma \ref{lem32}. For each nonnegative integer $d<J$, we have
\begin{align}\label{equ411}
\Delta_{u}^J\big|_{u=0}\cL_{f,d}&(\mu u, a, b,\P_{\alpha})\nonumber\\
&=\sum_{\substack{r,s\ge 0,\ell\ge J\\ 3r+2\ell+2s\le 2d}}
C_{r,\ell,s}(d;f)a^sb^{r}\Delta_{u}^J\big|_{u=0}(u^{\ell})\mu^{\ell}\P_{\alpha}^{r+\ell+2s}=0,
\end{align}
\begin{align}\label{equ412}
\Delta_{u}^J\big|_{u=0}\cL_{f,J}(\mu u,a,b,\P_{\alpha})
=&\sum_{\substack{r,\ell,s\ge 0\\ 3r+2\ell+2s\le 2J}}C_{r,\ell,s}(J;f)a^sb^{r}\Delta_{u}^J\big|_{u=0}(u^{\ell})
\mu^{\ell}\P_{\alpha}^{r+\ell+2s}\nonumber\\
=&J!\mu^JC_{0,J,0}(J;f)\P_{\alpha}^{J};
\end{align}
\begin{align}\label{equ413}
\Delta_{u}^J\big|_{u=0}\cL_{f,J+1}(\mu u,a,b,\P_{\alpha})=&\sum_{\substack{r,\ell,s\ge 0\\ 3r+2\ell+2s\le 2J+2}}C_{r,\ell,s}(J+1;f)a^sb^{r}
\Delta_{u}^J\big|_{u=0}(u^{\ell})\mu^{\ell}\P_{\alpha}^{r+\ell+2s}\nonumber\\
=&\sum_{\substack{\ell,s\ge 0\\ \ell+s\le 1}}C_{0,\ell+J,s}(J+1;f)a^s
\Delta_{u}^J\big|_{u=0}(u^{J+\ell})\mu^{\ell+J}\P_{\alpha}^{J+\ell+2s}\nonumber\\
=&J!\mu^JC_{0,J,0}(J+1;f)\P_{\alpha}^{J}+J!\mu^JC_{0,J,1}(J+1;f)a\P_{\alpha}^{J+2}\nonumber\\
&+\frac{J(J+1)!}{2}\mu^{J+1}C_{0,J+1,0}(J+1;f)\P_{\alpha}^{J+1};
\end{align}
\begin{align}\label{equ414}
\Delta_{u}^J\big|_{u=0}&\cL_{f, J+2}(\mu u, a, b,\P_{\alpha})\nonumber\\
&=\sum_{\substack{r,\ell,s\ge 0\\ 3r+2\ell+2s\le 4}}C_{r,\ell+J,s}(J+2;f)a^sb^r\mu^{\ell+J}\Delta_{u}^J\big|_{u=0}(u^{\ell+J})\P_{\alpha}^{J+r+\ell+2s}\nonumber\\
&=bC_{1,J,0}(J+2;f)J!\mu^J\P_{\alpha}^{J+1}+b^0\left(\dots\right)
\end{align}
and
\begin{align}\label{equ415}
\Delta_{u}^J\big|_{u=0}&\cL_{f, J+3}(\mu u, a, b,\P_{\alpha})\nonumber\\
&=\sum_{\substack{r,\ell,s\ge 0\\ 3r+2\ell+2s\le 6}}C_{r,\ell+J,s}(J+3;f)a^sb^r\mu^{\ell+J}\Delta_{u}^J\big|_{u=0}(u^{\ell+J})\P_{\alpha}^{J+r+\ell+2s}\nonumber\\
&=b^2C_{2,J,0}(J+3;f)J!\mu^J\P_{\alpha}^{J+2}+b^1\left(\dots\right)+b^0\left(\dots\right).
\end{align}
For an integer $d\ge J+4$, we have the following estimate
\begin{align}\label{equ416}
\Delta_{u}^J\big|_{u=0}&\cL_{f, d}(\mu u, a, b,\P_{\alpha})\Big|_{\alpha=\frac{b\beta_f}{2\sqrt{X}}}
\left\{\frac{1}{1+e^{\alpha}}\right\}\nonumber\\
=&\sum_{\substack{r,\ell,s\ge 0\\ 3r+2\ell+2s\le 2d}}C_{r,\ell,s}(d;f)a^sb^{r}\Delta_{u}^J
\big|_{u=0}(u^{\ell})\mu^{\ell}\P_{\alpha}^{r+\ell+2s}\Big|_{\alpha=\frac{b\beta_f}{2\sqrt{X}}}
\left\{\frac{1}{1+e^{\alpha}}\right\}\nonumber\\
\ll &\sum_{\substack{r,s\ge 0,\ell\ge J\\ 3r+2\ell+2s\le 2d}}a^sb^r\left|\P_{\alpha}^{r+\ell+2s}
\Big|_{\alpha=\frac{b\beta_f}{2\sqrt{X}}}\left\{\frac{1}{1+e^{\alpha}}\right\}\right|\nonumber\\
\ll &(1+b^{\lfloor\frac{2d-2J}{3}\rfloor})e^{-\frac{b\beta_f}{2\sqrt{X}}},
\end{align}
by use of Theorem \ref{mth}, Lemma \ref{lem32} and Lemma \ref{lem211}.
Let
\begin{align*}
{M}_{f,J}(a,b,\mu, X,\P_{\alpha})&=J!\mu^JC_{0,J,0}(J;f)\P_{\alpha}^{J}+\frac{J!\mu^J}{\sqrt{X}}\bigg(C_{0,J,0}(J+1;f)\P_{\alpha}^{J}\\
&+aC_{0,J,1}(J+1;f)\P_{\alpha}^{J+2}+\frac{J(J+1)}{2}\mu C_{0,J+1,0}(J+1;f)\P_{\alpha}^{J+1}\bigg)\\
&+J!\mu^J\frac{b}{X}C_{1,J,0}(J+2;f)\P_{\alpha}^{J+1}+J!\mu^J\frac{b^2}{X^{3/2}}C_{2,J,0}(J+3;f)
\P_{\alpha}^{J+2}.
\end{align*}
Inserting \eqref{eqc1}--\eqref{eqc5} yields
\begin{align}\label{eq38}
\frac{{M}_{f,J}(a,b,\mu, X,\P_{\alpha})}{(\mu \beta_f/2)^J}
=&\P_{\alpha}^{J}-\frac{J\left(4\alpha_f+(J-1)\right)}{2\beta_f\sqrt{X}}\P_{\alpha}^{J}\nonumber\\
&-\frac{a\beta_f^2+\left(\frac{\beta_fb}{2\sqrt{X}}\right)^2}{2\beta_f\sqrt{X}}
\P_{\alpha}^{J+2}-\frac{2\left(2\alpha_f+J\right)\left(\frac{\beta_fb}{2\sqrt{X}}\right)
 -\frac{\mu \beta_f^2 J}{2}}{2\beta_f\sqrt{X}}\P_{\alpha}^{J+1}.
\end{align}
Then, by Theorem \ref{mth}, \eqref{equ411}--\eqref{equ415} and the estimate \eqref{equ416} we have
\begin{align}\label{eestimate}
\frac{\Delta_{u}^J\big|_{u=0}S_{f}(a,b+\mu u;X)}{f(X)}&-X^{-J/2}{M}_{f,J}(a,b,\mu, X,\P_{\alpha})
\Big|_{\alpha=\frac{b\beta_f}{2\sqrt{X}}}\left\{\frac{1}{1+e^{\alpha}}\right\}\nonumber\\
\ll &\bigg(\frac{1}{X^{\frac{J+2}{2}}}+\frac{b}{X^{\frac{J+3}{2}}}+\sum_{J+3<d<3J+9}\frac{1+b^{\lfloor\frac{2d-2J}{3}\rfloor}}{X^{d/2}}
   \nonumber\\
    &\qquad\qquad\qquad\qquad+\frac{1+(b/\sqrt{X})^{2J+6}}{X^{\frac{J+3}{2}}}\bigg)e^{-\frac{b\beta_f}{2\sqrt{X}}}\nonumber\\
\ll& X^{-J/2-1}\left(1+(X^{-1/2}b)^4\right)e^{-\frac{b\beta_f}{2\sqrt{X}}}.
\end{align}
Here, the condition $0\le b\le \sqrt{X}(\log X)^2$ has been used in the last inequality of \eqref{eestimate}.

We now aim to simplify \eqref{eq38}. By Taylor's mean value theorem, it is not difficult to prove that, For all $z>-1$ and $\varepsilon=o(1)$, we have
\begin{equation}\label{eqtlem1}
\P_{\alpha}^J\big|_{\alpha=z+\varepsilon}\left\{\frac{1}{1+e^{\alpha}}\right\}
=\P_{\alpha}^J\big|_{\alpha=z}\left\{\frac{1}{1+e^{\alpha}}\right\}+O(|\varepsilon|e^{-z})
\end{equation}
and
\begin{equation}\label{eqtlem2}
\P_{\alpha}^J\big|_{\alpha=z+\varepsilon}\left\{\frac{1}{1+e^{\alpha}}\right\}
=\left(\P_{\alpha}^J+\varepsilon \P_{\alpha}^{J+1}\right)\big|_{\alpha=z}\left\{\frac{1}{1+e^{\alpha}}\right\}+O(|\varepsilon|^2e^{-z}),
\end{equation}
for each $J\ge 0$. Setting $z=\frac{b\beta_f}{2\sqrt{X}}$ and $\varepsilon=\frac{\mu J\beta_f}{4\sqrt{X}}$ in \eqref{eqtlem1} and \eqref{eqtlem2}, and combining \eqref{eq38},
\eqref{eestimate} can be reduced to
\begin{align*}
&\frac{\Delta_u^J\big|_{u=0}S_f(a,b+u\mu;X)}{X^{-J/2}(\mu \beta_f/2)^Jf(X)}\\
&\qquad\qquad=\left({M}_{f,J}(a,b, X,\P_{\alpha})+O\left(\frac{1+X^{-2}b^4}{X}\right)\right)
\Big|_{\alpha=\frac{(2b+\mu J)\beta_f}{4\sqrt{X}}}\left\{\frac{1}{1+e^{\alpha}}\right\},
\end{align*}
for each real number $b\ge 0$ such that $b\le \sqrt{X}(\log X)^2$. Here
\begin{align*}
{M}_{f,J}(a,b,X,\P_{\alpha})=&\P_{\alpha}^{J}-\frac{J\left(4\alpha_f+(J-1)\right)}{2\beta_f\sqrt{X}}\P_{\alpha}^{J}
\\
&-\frac{a\beta_f^2+\left(\frac{\beta_fb}{2\sqrt{X}}\right)^2}{2\beta_f\sqrt{X}}
\P_{\alpha}^{J+2}-\frac{2\left(2\alpha_f+J\right)\left(\frac{\beta_fb}{2\sqrt{X}}\right)}{2\beta_f\sqrt{X}}\P_{\alpha}^{J+1}.
\end{align*}
If we further denote
\begin{equation}\label{eqmmm}
{M}_{f,J}(a, \P_{\alpha})=\left(4\alpha_f-1+J\right)J\P_{\alpha}^{J}
+2(J+2\alpha_f)\alpha\P_{\alpha}^{J+1}+(a\beta_f^2+\alpha^2)\P_{\alpha}^{J+2},
\end{equation}
then
\begin{align*}
&\frac{\Delta_u^J\big|_{u=0}S_f(a,b+u\mu;X)}{X^{-J/2}(\mu \beta_f/2)^Jf(X)}\\
&\qquad\qquad=\left(\P_{\alpha}^J-\frac{{M}_{f,J}(a,\P_{\alpha})}{2\beta_f\sqrt{X}}+O\left(\frac{1+\alpha^4}{X}\right)\right)
\Big|_{\alpha=\frac{(2b+\mu J)\beta_f}{4\sqrt{X}}}\left\{\frac{1}{1+e^{\alpha}}\right\},
\end{align*}
which completes the proof of Theorem \ref{propm} for the cases of $0\le b\le \sqrt{X}(\log X)^2$.

\subsubsection{The cases of $\sqrt{X}(\log X)^2<b=o(X^{3/4})$}\label{sec413}

Assume the conditions for $b$ in Theorem \ref{thm1}, that is $L(X)X^{1/2}\log X\le b\le X-L(X)$ with $L(x)$ a real function satisfying $\lim\limits_{x\rrw+\infty}L(x)=+\infty$.
We have for each $J\in\nb_0$,
\begin{align*}
&\frac{\Delta_u^J\big|_{u=0}S_f(a,b+\mu u;X)}{f(X-b)}\\
&\qquad\qquad\sim \sum_{d\ge 0}\frac{1}{(X-b)^{d/2}}\sum_{\substack{\ell,s\ge 0\\ \ell+s\le d}}
C_{\ell,s}(d;f)(\Delta_u^J\big|_{u=0}u^{\ell})\mu^{\ell}a^{s}\\
&\qquad\qquad=\frac{\mu^JJ!C_{J,0}(J;f)}{(X-b)^{J/2}}+\frac{\mu^JJ!C_{J,0}(J+1;f)}{(X-b)^{(J+1)/2}}+\frac{a\mu^JJ!C_{J,1}(J+1;f)}{(X-b)^{(J+1)/2}}\\
&\qquad\qquad\qquad +\frac{\mu^{1+J} (J+1)!JC_{1+J,0}(J+1;f)}{2(X-b)^{(J+1)/2}}+O\left((X-b)^{-\frac{2+J}{2}}\right),
\end{align*}
by Theorem \ref{thm1}. Inserting the values of $C_{r,\ell}(d;f)$, that is \eqref{eqc11}--\eqref{eqc31}, we find that
\begin{align*}
&\frac{\Delta_u^J\big|_{u=0}S_f(a,b+\mu u;X)}{f(X-b)}\\
&\quad=\frac{(-\mu)^JJ!}{(X-b)^{\frac{J}{2}}}\frac{\beta_f^J}{2^JJ!}
 -\frac{(-\mu)^JJ!}{(X-b)^{\frac{J+1}{2}}}\left(\frac{\beta_f^{J-1}\alpha_f}{2^{J-1}\Gamma(J)}
 +\frac{\beta_f^{J-1}}{2^{J+1}\Gamma(J-1)}\right)\\
&\quad\quad-\frac{a(-\mu)^JJ!}{(X-b)^{\frac{J+1}{2}}}\frac{\beta_f^{J+1}}{2^{J+1}J!}
+\frac{(-\mu)^{1+J} (J+1)!J}{2(X-b)^{\frac{J+1}{2}}}\frac{\beta_f^{J+1}}{2^{J+1}(J+1)!}+O\left(\frac{1}{(X-b)^{\frac{2+J}{2}}}\right).
\end{align*}
Further simplification yields
\begin{align}\label{eq39}
&\frac{\Delta_u^J\big|_{u=0}S_f(a,b+\mu u;X)}{(X-b)^{-J/2}(-\mu\beta_f/2)^Jf(X-b)}\nonumber\\
&\qquad\qquad\qquad=1-\frac{J\left(4\alpha_f-1+J\right)
 +\frac{\mu J\beta_f^2}{2}+a\beta_f^2}{2\beta_f\sqrt{X-b}}+O\left(\frac{1}{X-b}\right).
\end{align}
From now on we assume $L(X)X^{1/2}\log X\le b=o(X^{3/4})$. Then we have
\begin{align*}
\frac{\Delta_u^J\big|_{u=0}S_f(a,b+\mu u;X)}{X^{-J/2}(-\mu\beta_f/2)^Jf(X)}=&\left(1-\frac{J\left(4\alpha_f-1+J\right)
+\frac{\mu J\beta_f^2}{2}+a\beta_f^2}{2\beta_f\sqrt{X}}+O\left(\frac{b}{X^{3/2}}\right)\right)\\
&\times
\left(1-\frac{b}{X}\right)^{-{J}/{2}}\frac{f(X-b)}{f(X)}
\end{align*}
by \eqref{eq39}. Since
$$\left(1-\frac{b}{X}\right)^{-{J}/{2}}=1+\frac{2J}{2\beta_f\sqrt{X}}\left(\frac{\beta_fb}{2\sqrt{X}}\right)+O\left(\frac{b^2}{X^2}\right),$$
and by Proposition \ref{lem22},
\begin{align*}
\frac{e^{\frac{\beta_fb}{2\sqrt{X}}}f(X-b)}{f(X)}
&=1-\frac{b}{X^{3/4}}\Lambda_{1}(f, X)+\frac{b^2}{X^{3/2}}\Lambda_{2}(f, X)
+O\left(\frac{b^3}{X^{5/2}}+\frac{b^4}{X^3}\right)\\
&=1-\frac{b}{X}\lambda_{1,1}(f)+\frac{b^2}{X^{3/2}}\lambda_{0,2}(f)
+O\left(\frac{b}{X^{3/2}}+\frac{b^2}{X^{2}}+\frac{b^4}{X^3}\right)\\
&=1+\frac{4\alpha_f\left(\frac{\beta_fb}{2\sqrt{X}}\right)-\left(\frac{\beta_f b}{2\sqrt{X}}\right)^2}{2\beta_f\sqrt{X}}
+O\left(\frac{b^4}{X^3}\right),
\end{align*}
we further have
\begin{align}\label{eq40}
&\frac{\Delta_u^J\big|_{u=0}S_f(a,b+\mu u;X)}{X^{-J/2}(-\mu\beta_f/2)^Jf(X)}\nonumber\\
&\qquad\qquad=\left(1-\frac{\mu J\beta_f}{4\sqrt{X}}\right)e^{-\frac{\beta_fb}{2\sqrt{X}}}
+O\left(\frac{X^{-2}b^4}{X}e^{-\frac{\beta_fb}{2\sqrt{X}}}\right)\nonumber\\
&\qquad\qquad-\frac{J\left(4\alpha_f-1+J\right)+a\beta_f^2+\left(\frac{\beta_f b}{2\sqrt{X}}\right)^2
-2\left(2\alpha_f+J\right)\left(\frac{\beta_fb}{2\sqrt{X}}\right)}{2\beta_f\sqrt{X}}e^{-\frac{\beta_fb}{2\sqrt{X}}}.
\end{align}
Finally, by inserting
$$e^{-\frac{\beta_fb}{2\sqrt{X}}}=\left(1+\frac{\mu J\beta_f}{2\sqrt{X}}
+O\left(\frac{1}{X}\right)\right)e^{-\frac{\beta_f(2b+\mu J)}{2\sqrt{X}}}$$
and using the fact that for each $N\in\nb_0$ and $b\ge \sqrt{X}$,
$$e^{-\frac{\beta_f(2b+\mu J)}{2\sqrt{X}}}=\left(O\left(e^{-\alpha}\right)+(-1)^N\P_{\alpha}^N\right)
\bigg|_{\alpha=\frac{\beta_f(2b+\mu J)}{2\sqrt{X}}}\left\{\frac{1}{1+e^{\alpha}}\right\},$$
in \eqref{eq40}, and simplifying, it is not difficult to find that
\begin{align*}
&\frac{\Delta_u^J\big|_{u=0}S_f(a,b+u\mu;X)}{X^{-J/2}(\mu \beta_f/2)^Jf(X)}\\
&\qquad\qquad=\left(\P_{\alpha}^J-\frac{{M}_{f,J}(a,\P_{\alpha})}{2\beta_f\sqrt{X}}+O\left(\frac{1+\alpha^4}{X}\right)\right)
\Big|_{\alpha=\frac{(2b+\mu J)\beta_f}{4\sqrt{X}}}\left\{\frac{1}{1+e^{\alpha}}\right\},
\end{align*}
where ${M}_{f,J}(a,\P_{\alpha})$ defined by \eqref{eqmmm}. This completes the proof of Theorem \ref{propm} for the cases of
$\sqrt{X}(\log X)^{2}< b=o(X^{3/4})$.

\subsection{The proof of Theorem \ref{propm'}}\label{sec42}

In this subsection we prove Theorem \ref{propm'}, we assume that $J\in\{0,1,2\}$. In view of Theorem \ref{propm} we need:
$$
\P_{\alpha}^{0}\left\{\frac{1}{1+e^{\alpha}}\right\}=\frac{1}{2}\left(1-\tanh\left(\frac{\alpha}{2}\right)\right),
$$
$$
\P_{\alpha}^{1}\left\{\frac{1}{1+e^{\alpha}}\right\}=-\frac{1}{4}{\rm sech}^2\left(\frac{\alpha}{2}\right),
$$
$$
\P_{\alpha}^{2}\left\{\frac{1}{1+e^{\alpha}}\right\}=\frac{1}{4}{\rm sech}^2\left(\frac{\alpha}{2}\right)\tanh\left(\frac{\alpha}{2}\right)$$
and
$$
\P_{\alpha}^{3}\left\{\frac{1}{1+e^{\alpha}}\right\}=\frac{1}{8}{\rm sech}^4\left(\frac{\alpha}{2}\right)\left(1-2\sinh^2\left(\frac{\alpha}{2}\right)\right).
$$

From the above computations, for any given positive number $c>0$ and $J\in\{0,1,2\}$ we have that
\begin{equation}\label{eq250}
\min(1, |\alpha|)\ll e^{\alpha}\P_{\alpha}^J\left\{\frac{1}{1+e^{\alpha}}\right\}\ll \min(1, |\alpha|),
\end{equation}
holds uniformly for all $\alpha>-c$.
On the other hand, for any $b\ge 0$ with $b=o(X^{3/4})$, from Theorems \ref{propm} and \ref{lem211} we have
\begin{equation}\label{eq251}
\frac{\Delta_u^J\big|_{u=0}S_f(a,b+u\mu;X)}{X^{-J/2}(\mu \beta_f/2)^Jf(X)}
=\bigg(1+O\bigg(\frac{1+\alpha^2}{\sqrt{X}}\bigg)\bigg)\P_{\alpha}^J
\Big|_{\alpha=\frac{(2b+\mu J)\beta_f}{4\sqrt{X}}}\bigg\{\frac{1}{1+e^{\alpha}}\bigg\}.
\end{equation}
Letting $X\mapsto X+b$, noting that (by Proposition~\ref{lem22})
$$f(X+b)\sim e^{\frac{\beta_f b}{2\sqrt{X}}}f(X),$$
and using \eqref{eqtlem1} to \eqref{eq251}, as well as the condition that $|\mu J+2b|\ge c>0$ in \eqref{eq250}, we obtain
$$
\frac{\Delta_u^J\big|_{u=0}S_f(a,b+u\mu;X+b)}{X^{-J/2}(\mu \beta_f/2)^Jf(X)}
\sim e^{\alpha}\P_{\alpha}^J
\Big|_{\alpha=\frac{(2b+\mu J)\beta_f}{4\sqrt{X}}}\left\{\frac{1}{1+e^{\alpha}}\right\}.
$$
Moreover, using \eqref{eq39} we have
\begin{align*}
\frac{\Delta_u^J\big|_{u=0}S_f(a,b+\mu u;X+b)}{X^{-J/2}(-\mu\beta_f/2)^Jf(X)}&=1+O\left(\frac{1}{\sqrt{X}}\right)\nonumber\\
&=e^{\alpha}\left(\P_{\alpha}^J+O\left(\frac{1}{\sqrt{X}}\right)\right)\bigg|_{\alpha=\frac{(2b+\mu J)\beta_f}{2\sqrt{X}}}\left\{\frac{1}{1+e^{\alpha}}\right\}.
\end{align*}
This implies that for
$$L(X)X^{1/2}\log X\le b\le X-L(X)\; \text{with} \; \lim_{x\rrw+\infty}L(x)=+\infty,$$
we have
$$\frac{\Delta_u^J\big|_{u=0}S_f(a,b+u\mu;X+b)}{X^{-J/2}(\mu \beta_f/2)^Jf(X)}
\sim e^{\alpha}\P_{\alpha}^J
\Big|_{\alpha=\frac{(2b+\mu J)\beta_f}{4\sqrt{X}}}\left\{\frac{1}{1+e^{\alpha}}\right\}.
$$
Combining the above we complete the proof of Theorem \ref{propm'}.

\section{The proof of the remaining results and numerical data}

In this section we prove use Theorem \ref{propm} and Theorem \ref{propm'} to prove the remaining results of this paper.
To verify our asymptotic formula we also illustrate some numerical data.

\subsection{The proof of Theorems \ref{znh1}--\ref{znh5}}\label{sec5}

\begin{table}
	\renewcommand{\arraystretch}{1.2}
        \caption{Numerical data for $b_{m,1}(n)$.}
	\label{tab1}
	\centering
  \begin{tabular}{ c | c | c | c  }
\hline

    $n$ & $b_{1,1}(n^2)$ & $T(1,n^2)$ & ${b_{1,1}(n^2)}/{T(1,n^2)}$\\
\hline
   $50$ & $8.67687\cdot 10^{45}$ & $9.08059\cdot 10^{45}$ & $0.9555$ \\
   $100$ & $1.39866\cdot 10^{100}$ & $1.43049\cdot 10^{100}$ & $0.9777$ \\
   $200$ & $1.11517 \cdot 10^{210}$ & $1.12772\cdot 10^{210}$ & $0.9889$ \\
   $400$ & $2.22252\cdot 10^{431}$ & $2.23496 \cdot 10^{431}$ & $0.9944$ \\
\hline
\hline
    $n$ & $b_{n,1}(n^2)$  & $T(n,n^2)$ & ${b_{n,1}(n^2)}/{T(n,n^2)}$\\
\hline
   $50$ & $1.77991\cdot 10^{47}$ & $1.81723\cdot 10^{47}$ & $0.9795$ \\
   $100$ & $5.66389\cdot 10^{101}$ & $5.72242\cdot 10^{101}$ & $0.9898$ \\
   $200$ & $8.97474 \cdot 10^{211}$ & $9.02079\cdot 10^{211}$ & $0.9949$ \\
   $400$ & $3.56615\cdot 10^{433}$ & $3.57527 \cdot 10^{433}$ & $0.9974$ \\
\hline
  \end{tabular}
\end{table}

 We first prove Theorem \ref{znh1} and \ref{znh2}. From \eqref{eq00}, that is
$$j_{m,k}(n-m{\bf 1}_{m>0})=S_{p_k}(1/2,|m|-1/2;n),$$
the fact that $a_{m,k}(n)=a_{|m|,k}(n)$, and \eqref{eqck} we obtain
\begin{align*}
a_{m,k}(n)&=a_{-m,k}(n)\\
&=j_{-m,k}(n)-j_{-m-1,k}(n)\\
&=\Delta_u\big|_{u=0}S_{p_k}(1/2,m+1/2-u;n).
\end{align*}
Therefore, by using \eqref{eqck} we find that
$$
b_{m,k}(n)=\Delta_u^2\big|_{u=0}S_{p_k}(1/2,m+3/2-u;n),
$$
holds for all integers $m, n\ge 0$.  Thus by substituting $\beta_{p_k}=2\pi\sqrt{k/6}$ (see \eqref{aspk}) into Theorems \ref{propm} and \ref{propm'},
and above relations for $j_{m,k}(n-m{\bf 1}_{m>0}), a_{m,k}(n)$ and $b_{m,k}(n)$ we immediately get the proof of Theorems \ref{znh1} and \ref{znh2}.

\smallskip

We now prove Theorems \ref{znh3}--\ref{znh5}.  From \eqref{eq00'} and \eqref{2grk} we have
$$I_k(m,n)=S_{p}(k-1/2,m-1/2;n),$$
holds for all integers $m,n\ge 0$, and for all $m\in\zb$ and integers $n\ge 0$,
\begin{align*}
N_k(m,n)&=I_k(|m|,n)-I_k(|m|+1,n)\\
&=\Delta_u\big|_{u=0}S_{p}(k-1/2,|m|+1/2-u;n).
\end{align*}
This means that for all integers $n\ge 0$,
$$
N_k(m,n)-N_k(m+1,n)=
\begin{cases}
\Delta_u^2\big|_{u=0}S_{p}(k-1/2,m+3/2-u;n),\quad &~ m\ge 0 \\
-[N_k(|m|-1,n)-N_k(|m|,n)]&~ m<0.
\end{cases}
$$
\begin{table}
	\renewcommand{\arraystretch}{1.2}
        \caption{Numerical data for $N_2(m,n)-N_2(m+1,n)$.}
	\label{tab1}
	\centering
  \begin{tabular}{ c | c | c | c  }
\hline
    $n$ & $N_2(0,n^2)-N_2(1,n^2)$ & $T(0,n^2)$ & $\frac{N_2(0,n^2)-N_2(1,n^2)}{T(0,n^2)}$\\
\hline
  $50$ & $3.04871\cdot 10^{45}$ & $3.02819\cdot 10^{45}$ & $1.0068$ \\
   $100$ & $4.78500\cdot 10^{99}$ & $4.76884\cdot 10^{99}$ & $1.0034$ \\
   $200$ & $3.76555 \cdot 10^{209}$ & $3.75918\cdot 10^{209}$ & $1.0017$ \\
   $400$ & $7.45623\cdot 10^{430}$ & $7.44992\cdot 10^{430}$ & $1.0008$ \\
\hline
\hline
    $n$ & $N_2(n+1,n^2)-N_2(n+2,n^2)$  & $T(n+1,n^2)$ & $\frac{N_2(n+1,n^2)-N_2(n+2,n^2)}{T(n+1,n^2)}$\\
\hline
   $50$ & $1.82908\cdot 10^{47}$ & $1.81764\cdot 10^{47}$ & $1.0063$ \\
   $100$ & $5.74203\cdot 10^{101}$ & $5.72403\cdot 10^{101}$ & $1.0031$ \\
   $200$ & $9.03662 \cdot 10^{211}$ & $9.02244\cdot 10^{211}$ & $1.0016$ \\
   $400$ & $3.57845\cdot 10^{433}$ & $3.57564\cdot 10^{433}$ & $1.0008$ \\
\hline
  \end{tabular}
\end{table}
Thus, by substituting $\beta_{p}=2\pi/\sqrt{6}$ (see \eqref{aspk}) into  Theorems \ref{propm} and \ref{propm'},
and above relations for $I_k(m,n)$ and $N_k(m,n)$, it is not difficult to obtain the proof of Theorems \ref{znh3}--\ref{znh5}.

\medskip

\subsection{Numerical data}
As stated in this article, the leading uniform asymptotics of $a_{m,k}(n)$ and $N_2(m,n)$ have been proved in existing literature.
While the leading uniform asymptotics for $b_{m,k}(n)$ and $N_{k}(m,n)-N_k(m+1,n)$ are completely new in this paper.

\smallskip

From Theorem \ref{znh1} and \ref{znh3}, we find that the leading asymptotics of both $b_{m,k}(n)$ and $N_{k}(m,n)-N_k(m+1,n)$ are the same, and equal
$$
T(m,n)=\frac{\pi^2}{24n}{\rm sech}^2\left(\frac{\pi(2m+1)}{4\sqrt{6n}}\right)\tanh \left(\frac{\pi(2m+1)}{4\sqrt{6n}}\right) p(n).
$$

We illustrate some of our results in the following tables (All computations are done in {\bf Mathematica}, and they are all approximate values.)
\paragraph{Acknowledgements.} The author would like to thank the anonymous referees for their very helpful comments and suggestions. This work was supported by the National Science Foundation of China (Grant No. 11971173) and Science and Technology Commission of Shanghai Municipality (Grant No. 13dz2260400).

\bigskip
\noindent
{\sc Zhi-Guo Liu and Nian Hong Zhou\\
School of Mathematical Sciences \& Shanghai Key Laboratory of PMMP\\
East China Normal University\\
500 Dongchuan Road, Shanghai 200241, People's Republic of China}\newline
E-mails: \href{mailto:nianhongzhou@outlook.com}{\small nianhongzhou@outlook.com};\; \href{mailto:zgliu@math.ecnu.edu.cn, liuzg@hotmail.com}{\small zgliu@math.ecnu.edu.cn, liuzg@hotmail.com}

\end{document}